\title{On the $\theta$-split Side of the Local Relative Trace Formula}
\author{Jonathan Sparling}
\newtheorem{thm}{Theorem}[section]
\newtheorem{lemma}[thm]{Lemma}
\newtheorem{coro}[thm]{Corollary}
\newtheorem{prop}[thm]{Proposition}
\theoremstyle{definition}
\newtheorem{definition}[thm]{Definition}
\theoremstyle{remark}
\newtheorem{example}[thm]{Example}
\begin{document}

\maketitle

\begin{abstract}
The author derives an expression for one side of the local relative trace formula at the level of Lie algebras, by combining methods of Arthur and Harish-Chandra with the structure theory of reductive symmetric spaces.
\end{abstract}

\section{Introduction} \label{1}

The local trace formula of Arthur, derived in \cite{Art91}, is a tool in local harmonic analysis that identifies a sum of distributions involving (weighted) orbital integrals with a sum of distributions involving (weighted) characters.  This allows one to use harmonic analysis to better understand the representation theory of a $p$-adic group $H$.   Specifically, one obtains information about the representation $R$ of $H \times H$ on the space $C_c^{\infty}(H)$ of locally constant and compactly supported complex-valued functions on $H$ that is given by
\[ (R(h, g) \phi)(x) = \phi(h^{-1}xg). \]
To derive this formula, Arthur takes  two test functions $f_1$ and $f_2$ in $C_c^{\infty}(H)$, and expresses the averaged operator $R(f_1, f_2)$ as an integral operator with kernel
\[ K(x, y) = \int_H f_1(h) \, f_2(x^{-1}hy) \, dh. \]
After a suitable truncation procedure, he integrates this kernel along the diagonal to obtain a (geometric) expression for a modification of the trace of $R(f_1, f_2)$.   He then combines this truncation procedure with the Plancherel formula of Harish-Chandra to obtain a spectral expression for this modification.  The local trace formula is the statement that these two expressions are equal, which provides a connection between the geometry of $H$ and its representation theory.

We would like to express Arthur's development in the context of symmetric spaces.   The diagonal embedding of $H$ into $H \times H$ defines a reductive symmetric space $H \backslash H \times H$ and an isomorphism of $C^{\infty}_c(H \backslash H \times H)$ with $C^{\infty}_c(H)$ by associating to $\phi$ in $C^{\infty}_c(H)$ the function
\[ (x_1, x_2) \mapsto \phi(x_1^{-1}x_2) \]
on $H \backslash H \times H$.   This isomorphism intertwines $R$ with the right regular representation of $H \times H$ on $C^{\infty}_c(H \backslash H \times H)$, and we can interpret $K(x, y)$ as the kernel of this representation.   Optimistically, one hopes that Arthur's development may be generalized to address the right regular representation of a $p$-adic group $G$ on $C_c^{\infty}(H \backslash G)$ for a large class of reductive symmetric spaces $H \backslash G$.  In this article, we give this generalization at the level of Lie algebras for the geometric side of the formula.

The motivating article for pursuing the local trace formula at the level of Lie algebras is \cite{Wal95}, in which Waldspurger proves that
\begin{align*}
	J(f_1, f_2) &= J(\hat{f_1}, \check{f_2})
\end{align*}
for a bilinear distribution $J(f_1, f_2)$ that equals a sum of certain (orbital) integrals over the conjugacy classes of maximal tori in $H_F$.   He derives this identity from Arthur's local trace formula with the exponential map, and uses it to prove several substantial results from local harmonic analysis, including the representability by a specific function of certain invariant distributions on $\mathfrak{g}$, extending earlier work of Harish-Chandra in \cite{HC70}.   In addition to being a powerful tool in harmonic analysis, this formula also provides a technically simpler analog of Arthur's local trace formula that nevertheless exhibits much of its structure. For example, one encounters in it weighted orbital integrals on $\mathfrak{g}$ with essentially the same weights. For these reasons, one would very much like to generalize this identity from reductive groups to reductive symmetric spaces. 

To discuss this generalization in more detail, we require some notation.  We will be working over a $p$-adic field $F$ with ring of integers $\mathcal{O}$, and a chosen uniformizing element $\varpi$.   We will assume further that $F$ is of characteristic 0 and of residue characteristic greater than 2.    Let $\mathbb{G}$ be a reductive, algebraic group defined over $\mathcal{O}$, which we will assume to be split and connected, and let $\theta : \mathbb{G} \rightarrow \mathbb{G}$ be an involution on $\mathbb{G}$, which we will also assume to be defined over $\mathcal{O}$.   Write $\mathbb{H} := \mathbb{G}^{\theta}$ for the fixed points of $\theta$.   Let the $F$-points of $\mathbb{G}$ and $\mathbb{H}$ be written $G_F$ and $H_F$ respectively.  The quotient of groups $H_F \backslash G_F$ has the structure of a reductive symmetric space. We will occasionally omit the subscript $F$ when considering $F$-points, following the convention that any quotient written as $H \backslash G$ will be $H_F \backslash G_F$.   If $\mathfrak{g}$ denotes the Lie algebra of $G_F$ and $\mathfrak{h}$ the Lie algebra of $H_F$, then the Killing form (or the involution $\theta$) provides an orthogonal complement $\mathfrak{h}^{\perp}$ to $\mathfrak{h}$ and a decomposition $\mathfrak{g} = \mathfrak{h} \oplus \mathfrak{h}^{\perp}$ from the inclusion $\mathfrak{h} \subset \mathfrak{g}$. The subspace $\mathfrak{h}^{\perp}$ is the tangent space to the symmetric variety $H_F \backslash G_F$ at the identity coset $H_F$.  

We begin the development of the relative local trace formula with a modified version of the infinitesimal Plancherel identity:
\[ \int_{\mathfrak{h}^{\perp}} f(g^{-1}Xg) \, dX = \int_{\mathfrak{h}} \hat{f}(g^{-1}Xg) \, dX. \]
This already incorporates Arthur's application of Harish-Chandra's Plancherel formula.   What remains is to truncate both sides of this identity suitably, integrate over the symmetric space $H \backslash G$, and refine the resulting expressions into two equal distributions of possibly weighted orbital integrals and characters.    In other words, our task will be to study the identity
\[ \int_{H \backslash G} \phi(g) \int_{\mathfrak{h}^{\perp}} f(g^{-1}Xg) \, dX\, dg =  \int_{H \backslash G} \phi(g) \int_{\mathfrak{h}} \hat{f}(g^{-1}Xg) \, dX\, dg \]
for a carefully chosen function $\phi \in C_c^{\infty}(H \backslash G)$. 

In this paper, we attend to the left-hand side, which we call the $\theta$-split side, and give an essentially complete formulation for the corresponding part of the trace formula.   This formulation will involve only weighted orbital integrals whose weights are only slightly modified from those of Arthur.   The infinitesimal spectral side is less straightforward, and we will discuss its development in subsequent papers, currently in preparation.   In \cite{Spa08}, the author has given an independent derivation of both sides for the case of $F^{\times} \backslash SL_2(F)$, a concrete example that clarifies some of the technical difficulties that must be overcome in the general case.

\subsection{Outline}

In section \hyperref[2]{2}, we review some of the structure theory of reductive symmetric spaces, and derive the formal expansion of the $\theta$-split side of the relative local trace formula.   We derive a Weyl integration formula for the vector space $\mathfrak{h}^{\perp}$ and prove that the $\theta$-split side equals
\begin{align*}
&\sum_{M \in \mathcal{L}^{-}} \sum_{T \in \mathcal{T}_M} \frac{1}{|W_{\tilde{T}H}(T)| \cdot |W_M(T)|} \int_{\mathfrak{t}'} \left| D^G(X) \right|^{\frac{1}{2}} \int_{(A_M \cdot Z_H(T)) \backslash G} f(g^{-1}Xg) \, \omega_M (g) \, dg\, dX 
\end{align*}
where $\mathcal{L}^{-}$ is a set of Levi subgroups, $\mathcal{T}_M$ is a set of representatives of the $M_F$-conjugacy classes of certain tori that are elliptic in $M_F$, and
\begin{align*}
	\omega_M(g, \mu) &:= \sum_{S \in \mathcal{T}_G(T)} \frac{|W_G(S)|}{|W_H(S)|}  \int_{Z_H(T)_F\backslash (A_M \cdot Z_H(T))_F} \phi(g_Stg) \; dt.
\end{align*}
The set $\mathcal{T}_G(T)$ is a set of representatives of the $H_F$-conjugacy classes of $\theta$-split tori contained in the $G_F$-conjugacy class of $T$.  In specific cases, the set $\mathcal{T}_G(T)$ appears to be rather difficult to compute explicitly. 

In section \hyperref[3]{3}, we carefully choose a suitable truncation function that correctly generalizes Arthur's truncation procedure.  We fix a particular $F$-split maximal torus $A$ and let $\phi$ equal the characteristic function of the set
\begin{align*}
\{ g :  \text{Cartan\,}(\theta(g)^{-1}g) \in \text{Hull} \left\{ W_G(A) \cdot \mu \right\}^* \}
\end{align*}
for some dominant coweight $\mu$ of $A$.   We call this characteristic function $\bar{\omega}(g, \mu)$.  When $\mu$ is sufficiently regular in a sense that depends on $g$, the terms defining $\omega_M$ beautifully combine to yield a simple combinatorial formula:
\[ \omega_M(g, \mu) = \# \{ \nu \in X_*(A_M)^- \cap \text{Im\,}\tau : \nu \in \text{ Hull\,}\{\mu_B -H_B(g) + H_{\bar{B}} (\theta(g))\}^*  \}. \]
Here $X_*(A_M) \cap \text{Im\,}\tau$ is the subset of $X_*(A_M) \cong A_M / A_M(\mathcal{O})$ comprising those coweights that have representatives (or lifts) in $A_M$ that lie in the image of the function
\[ \tau (x) = \theta(x)^{-1}x \]
on $G$.   Aside from the complication introduced by this set, these weight factors are essentially those defined by Arthur in the local trace formula for reductive algebraic groups.  We also show that these combinatorial weight factors are equal to the Euler-Poincar\'{e} characteristics of certain line bundles on certain toric varieties, when $\mu$ is sufficiently regular.   These Euler-Poincar\'{e} characteristics are polynomials in the coefficients of $\mu$, by general results from algebraic geometry, and we will call these polynomials $\nu_M(g, \mu)$.

Finally, after proving that the integrals in the formal expansion of the $\theta$-split side are sufficiently well behaved in section \hyperref[4]{4}, we apply the Lebesgue dominated covergence theorem to prove the main result of this paper, which is the next proposition, proven in \hyperref[5]{5}.   We briefly explain this result.  If we replace the weight factors $\omega_M$ on the $\theta$-split side by the polynomials $\nu_M$, we obtain a distribution that we will call $J_-(f, \nu)$.   This distribution does not equal the $\theta$-split side, but rather serves as a uniquely defined polynomial approximation at infinity.   

\begin{prop} \label{main_result}
For any two dominant coweights $\mu_1$ and $\mu_2$, let $\mu := \mu_1 + d\mu_2$.   Then
\begin{align*}
 &\lim_{d \rightarrow \infty} \theta\text{-split side} - J_-(f, \nu) = 0.
\end{align*}
\end{prop}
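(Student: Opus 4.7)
The plan is to apply the Lebesgue dominated convergence theorem to the difference of the two distributions, viewed as an integral over $\mathfrak{t}' \times (A_M \cdot Z_H(T)) \backslash G$ of
\[ f(g^{-1}Xg) \, |D^G(X)|^{1/2} \bigl[ \omega_M(g, \mu_1 + d\mu_2) - \nu_M(g, \mu_1 + d\mu_2) \bigr], \]
summed over $M \in \mathcal{L}^{-}$ and $T \in \mathcal{T}_M$. The integrability hypotheses needed for dominated convergence have already been verified in section \hyperref[4]{4}; what remains is to establish pointwise vanishing of the bracket as $d \to \infty$ for almost every $(X,g)$, and to find a fixed integrable majorant valid for all $d$.

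For the pointwise statement, I would invoke the explicit formula recalled in section \hyperref[3]{3}: when $\mu$ is sufficiently regular relative to $g$, the combinatorial weight $\omega_M(g, \mu)$ coincides with the Euler--Poincar\'e polynomial $\nu_M(g, \mu)$. The notion of sufficient regularity depends on $g$ only through the quantities $H_B(g)$ and $H_{\bar{B}}(\theta(g))$ that appear in the translated hull, so the set of $\mu$ for which equality fails at a given $g$ is bounded by a constant that grows only linearly in $\|H_B(g)\| + \|H_{\bar{B}}(\theta(g))\|$. For fixed $g$ and any fixed direction $\mu_2$ in the dominant chamber, the scaled coweight $\mu_1 + d\mu_2$ eventually leaves every such bounded set, so the bracket is identically zero once $d$ exceeds an explicit threshold $d_0(g)$. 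This yields pointwise convergence to zero on the full measure set where $g$ is finite.

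For the dominating function, I would bound each of $\omega_M(g,\mu)$ and $\nu_M(g,\mu)$ separately, independently of $d$. Both are polynomials (resp. quasi-polynomials realized as lattice point counts) in $\mu$, and the support of the $g$-integrand is controlled by the support of $f$ after conjugation, together with the polytopal support structure from section \hyperref[3]{3}. On this support the weights grow at most polynomially in $\|H_B(g)\|$ and $\|H_{\bar{B}}(\theta(g))\|$, with degree bounded by $\dim A_M^-$; coupled with the estimate $|D^G(X)|^{1/2}$ and the compact support of $f$ in the $X$-variable, the corresponding integrals were shown in section \hyperref[4]{4} to converge absolutely. Since $\mu_1 + d\mu_2$ ranges in a fixed cone, the polynomial majorant can be taken to depend only on $\mu_1$ and $\mu_2$, not on $d$ itself, by absorbing the $d$-dependence into the leading coefficient and then using the cone structure to uniformize.

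The principal obstacle, I expect, will be the last point: making the polynomial majorant uniform in $d$ while still integrable. The bound on $\omega_M$ grows with $\mu$, and one must ensure that the shrinking region of non-agreement compensates for this growth. The natural route is to cut the $g$-integration into a region where $\|H_B(g)\| + \|H_{\bar{B}}(\theta(g))\| \leq c \cdot d$ for a small enough $c$ depending on $\mu_2$ (on which the bracket vanishes for $d$ large), and its complement, where exponential decay of $f(g^{-1}Xg)$ in the non-compact directions of $g$, transferred from the Cartan decomposition, beats the polynomial growth of the weights. With this split the dominating function can be taken independent of $d$, the pointwise limit established above passes through the integral, and the proposition follows.
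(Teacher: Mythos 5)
Your proposal follows essentially the same route as the paper: dominated convergence, with pointwise vanishing coming from the asymptotic agreement $\omega_M(g,\mu) = \nu_M(g,\mu)$ once $\mu_1 + d\mu_2$ is sufficiently regular relative to $g$ (Arthur's key geometric lemma), and a $d$-independent majorant obtained because the bracket is supported where $d \lesssim 1 + \log \| g \|_{A_M \backslash G}$, so the polynomial bound on the weight factors collapses to $a(b + \log \| g \|_{A_M \backslash G})^r$, whose weighted orbital integrals converge absolutely by the results of section \hyperref[4]{4}. The only inaccuracy is cosmetic: on the complement region there is no ``exponential decay of $f(g^{-1}Xg)$'' ($f$ is compactly supported); the actual mechanism is that the support of the orbital integral in $g$ is controlled by $|D^G_{Z_G(T)}(X)|^{-1}$ and the local integrability of $|D^G_{Z_G(T)}(X)|^{-\epsilon}$ makes the $(\log\|g\|)^r$-weighted integrals converge, which is exactly the section \hyperref[4]{4} lemma you invoke.
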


This proposition completes the fundamental development of the $\theta$-split side of the relative local trace formula.  Indeed, once one has derived a polynomial approximation $J_+(\hat{f}, \nu)$ for the other side, and proven the analogous limit, one can subtract these two limits to prove that
\[ \lim_{d \rightarrow \infty} J_-(f, \nu)- J_+(\hat{f}, \nu) = 0. \]
Since $J_-(f, \nu)$ and $J_+(\hat{f}, \nu)$ are both polynomials in the coordinates of $\mu$, this is only possible if they are equal for all values of $\mu$:
\[ J_-(f, \nu) = J_+(\hat{f}, \nu).\]
This is the sought relative local trace formula for indeterminate $\mu$.  In this paper, however, we will only concern ourselves with one side of this formula, and so we will set as our goal the derivation of the limit of proposition \ref{main_result}.  

It is worth emphasizing that even though we let $\mu$ become very big in the derivation of this identity, we do not pass to the limit.   The relative local trace formula, like the formula of Arthur, may be said to depend as a polynomial on $\mu$.   This provides a formula for each coefficient of this polynomial, but only the formula given by the constant coefficient does not reduce to the relative local trace formula for some smaller subgroup of $G_F$.

\subsection{Acknowledgements}

The author would like to thank Professor Robert E. Kottwitz for his unparalleled support and guidance, and is particularly grateful for many enlightening conversations and for his well-placed encouragement.  This work would not have been possible without his generosity and kindness.

\section{Preliminary Structure Theory} \label{2}

To begin, we will review some definitions and theorems concerning reductive symmetric spaces that may be found in \cite{Vus74} for algebraically closed fields, and in various papers by Helminck (eg. \cite{Hel91}, \cite{Hel97}, \cite{Hel00}) more generally.   We address $p$-adic fields of characteristic zero and residue characteristic greater than 2, as well as their algebraic closures.

\subsection{Tori Adapted to $\theta$}

Much of the structure theory of $H_F \backslash G_F$ is encoded in the relationship between $\theta$ and the structure theory of $G_F$.   Because there always exists a torus preserved by the action of $\theta$ (eg. \cite{Hel91}), we can study the action of $\theta$ on its roots and weights.   One complication, however, is that this action can vary from one torus to the next.  The next definition, in part, identifies those tori on which the action of $\theta$ is particularly simple.

\begin{definition} 
Suppose that $T \subset G$ is a torus.  Then $T$ is $\theta$-stable if $\theta(T) \subset T$;  $T$ is $\theta$-split if $\theta(t) = t^{-1}$ for all $t \in T$;  $T$ is $\theta$-fixed if $\theta(t) = t$ for all $t \in T$;  and $T$ is $(\theta, F)$-split if $T$ is both $\theta$-split and $F$-split. 
\end{definition}

Suppose that $T$ is a $\theta$-stable torus.  Let $X^{*}(T)$ denote the group of rational characters of $T$, and let $X_{*}(T)$ denote the group of rational cocharacters of $T$.  $\theta$ then acts by composition on $X_{*}(T)$ and by precomposition on $X^{*}(T)$.   While a $\theta$-stable torus $T$ will not usually be either $\theta$-split or $\theta$-fixed, it will certainly contain $\theta$-fixed and $\theta$-split subtori.   Set
\begin{align*}
T^- &:= \{ t \in T : \theta(t) = t^{-1} \}^0 \text{ and } T^+ := \{ t \in T : \theta(t) = t \}^0.
\end{align*}
Geometrically, the product map
\begin{align*}
	\mu : T^+ \times T^- \rightarrow T; \hspace{3ex} (t_1, t_2) \mapsto t_1 \cdot t_2
\end{align*}
is an isogeny.  It provides a decomposition $T = T^+ \cdot T^-$ where $T^- \cap T^+$ is a finite group. In fact, this intersection has the form $(\mathbb{Z} / 2\mathbb{Z})^n$ for some $n$.  

We will write $N_G(T)$ for the normalizer of $T$, $Z_G(T)$ for its centralizer, and $W_G(T)$ for its Weyl group.  The possibly restricted abstract root systems of $\mathfrak{t}$ and $T$ in $G$ will be written $\Phi(\mathfrak{t}, G)$ and $\Phi(T, G)$ respectively.   We will also need to generalize the notion of a Cartan subalgebra from Lie algebras to this symmetric space setting.

\begin{definition}
A Cartan subspace of $\mathfrak{h}^{\perp}$ is a maximal abelian subspace of $\mathfrak{h}^{\perp}$ that contains only semisimple elements.  
\end{definition}

It follows from this definition that the Lie algebra of a maximal $\theta$-split torus is a Cartan subspace of $\mathfrak{h}^{\perp}$, and that every such Cartan subspace arises in this way.   We will be interested at first in the $H_F$-conjugacy classes of Cartan subspaces of $\mathfrak{h}^{\perp}$, or, equivalently, the $H_F$-conjugacy classes of maximal $\theta$-split tori.  Over $\bar{F}$, these collapse into a single conjugacy class.

\begin{thm} \cite{Vus74} 
$H_{\bar{F}}$ acts transitively on the maximal $\theta$-split tori of $G_{\bar{F}}$.
\end{thm}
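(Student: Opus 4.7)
The plan is to reduce the statement to Cartan subspaces of $\mathfrak{h}^\perp$ and then prove transitivity there via a tangent-space computation. I work over $\bar F$ throughout. As noted immediately before the statement, the assignment $T \mapsto \mathrm{Lie}(T)$ gives an $H_{\bar F}$-equivariant bijection between the maximal $\theta$-split tori of $G_{\bar F}$ and the Cartan subspaces of $\mathfrak{h}^\perp$. Hence it suffices to show that $H_{\bar F}$ acts transitively by $\mathrm{Ad}$ on the set of Cartan subspaces of $\mathfrak{h}^\perp$.

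My next step is to introduce the notion of a $\theta$-regular element of $\mathfrak{h}^\perp$: a semisimple $X$ whose $\mathfrak{h}^\perp$-centralizer $\mathfrak{z}_{\mathfrak{h}^\perp}(X) := \mathfrak{z}_{\mathfrak{g}}(X) \cap \mathfrak{h}^\perp$ is itself a Cartan subspace. Given any Cartan subspace $\mathfrak{a}$, choose a maximal $\theta$-stable torus of $G$ whose $\theta$-split part has Lie algebra $\mathfrak{a}$ and decompose $\mathfrak{g}$ into root spaces; one then sees directly that the $\theta$-regular elements of $\mathfrak{a}$ form the complement of finitely many hyperplanes in $\mathfrak{a}$, hence a Zariski-dense open subset.

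The heart of the argument is to show that the $H$-orbit of any $\theta$-regular $X \in \mathfrak{h}^\perp$ is open in $\mathfrak{h}^\perp$. The tangent space to the orbit at $X$ is $[\mathfrak{h}, X]$. Because $X$ is semisimple, $\mathfrak{g} = \mathfrak{z}_{\mathfrak{g}}(X) \oplus [\mathfrak{g}, X]$, and both summands are $\theta$-stable. Because $X \in \mathfrak{h}^\perp$, the operator $\mathrm{ad}(X)$ interchanges $\mathfrak{h}$ and $\mathfrak{h}^\perp$, so $[\mathfrak{g}, X] \cap \mathfrak{h}^\perp = [\mathfrak{h}, X]$. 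Intersecting the direct sum decomposition with $\mathfrak{h}^\perp$ then yields
\[ \mathfrak{h}^\perp = \mathfrak{z}_{\mathfrak{h}^\perp}(X) \oplus [\mathfrak{h}, X] = \mathfrak{a} \oplus [\mathfrak{h}, X], \]
where $\mathfrak{a}$ is the Cartan subspace containing $X$. Consequently $\dim(H \cdot X) = \dim \mathfrak{h}^\perp$, and the orbit is open.

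To finish, given two Cartan subspaces $\mathfrak{a}_1, \mathfrak{a}_2$, pick $\theta$-regular elements $X_i \in \mathfrak{a}_i$. The orbits $H \cdot X_i$ are both open in the irreducible affine space $\mathfrak{h}^\perp$, so they meet and therefore coincide. Taking $h \in H_{\bar F}$ with $\mathrm{Ad}(h) X_1 = X_2$ gives $\mathrm{Ad}(h)\mathfrak{a}_1 = \mathrm{Ad}(h) \mathfrak{z}_{\mathfrak{h}^\perp}(X_1) = \mathfrak{z}_{\mathfrak{h}^\perp}(X_2) = \mathfrak{a}_2$, which proves the theorem. The main obstacle in this plan is the transversality decomposition $\mathfrak{h}^\perp = \mathfrak{a} \oplus [\mathfrak{h}, X]$; everything else is formal, while that step rests essentially on the $\theta$-equivariance of $\mathrm{ad}(X)$ when $X \in \mathfrak{h}^\perp$, together with the semisimplicity of $X$.
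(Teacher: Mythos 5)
Your reduction to Cartan subspaces and the decomposition $\mathfrak{h}^\perp = \mathfrak{z}_{\mathfrak{h}^\perp}(X)\oplus[\mathfrak{h},X]$ for semisimple $X\in\mathfrak{h}^\perp$ are correct (the paper itself only cites \cite{Vus74} for this theorem, so there is no internal proof to compare against). The genuine gap is the step ``consequently $\dim(H\cdot X)=\dim\mathfrak{h}^\perp$, and the orbit is open'': this contradicts the decomposition you have just established. Since $\mathfrak{z}_{\mathfrak{h}^\perp}(X)=\mathfrak{a}$ for $\theta$-regular $X$, the tangent space $[\mathfrak{h},X]$ of the orbit has codimension $\dim\mathfrak{a}$ (the rank of $\mathfrak{h}^\perp$), so $H\cdot X$ is never open unless that rank is zero; indeed orbits of semisimple elements are closed, not open. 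In the paper's own Example \ref{example_one}, $\mathfrak{h}^\perp$ is two-dimensional while the $H_{\bar F}$-orbit of a $\theta$-regular element is the one-dimensional level set of the invariant $bc$; two such orbits for different values of $bc$ never meet, so your final ``two open orbits must intersect'' argument collapses.

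The repair is close to what you wrote but must use the saturation of a Cartan subspace rather than a single orbit. For each Cartan subspace $\mathfrak{a}$ consider the morphism $H\times\mathfrak{a}'\to\mathfrak{h}^\perp$, $(h,X)\mapsto\mathrm{Ad}(h)X$. Your tangent computation shows its differential at $(e,X)$, namely $(Y,A)\mapsto[Y,X]+A$, has image $[\mathfrak{h},X]+\mathfrak{a}=\mathfrak{h}^\perp$, so the morphism is dominant and $\mathrm{Ad}(H_{\bar F})\mathfrak{a}'$ contains a dense open subset of the irreducible variety $\mathfrak{h}^\perp$. For two Cartan subspaces $\mathfrak{a}_1,\mathfrak{a}_2$ these saturations therefore intersect, giving $\mathrm{Ad}(h_1)X_1=\mathrm{Ad}(h_2)X_2$ with $X_i\in\mathfrak{a}_i'$ (not necessarily the elements you first chose); the centralizer identity $\mathfrak{z}_{\mathfrak{h}^\perp}(X_i)=\mathfrak{a}_i$ then transports $\mathfrak{a}_1$ to $\mathfrak{a}_2$, and in characteristic zero conjugacy of the Lie algebras gives conjugacy of the tori. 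Note also that this centralizer identity, which you assert one ``sees directly,'' deserves an argument: for instance, any semisimple element of $\mathfrak{z}_{\mathfrak{h}^\perp}(X)$ commutes with $\mathfrak{a}$ and so lies in $\mathfrak{a}$ by maximality, and the absence of non-semisimple elements follows from the structure of $Z_G(T)$ as in lemma \ref{centralizer_lemma}.
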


One consequence of this theorem is that all Cartan subspaces of $\mathfrak{h}^{\perp}$ have the same dimension, which is then a uniquely defined number.

\begin{definition}
The rank of $\mathfrak{h}^{\perp}$ is the dimension of any Cartan subspace in $\mathfrak{h}^{\perp}$.
\end{definition}

There is also a notion of regularity adapted to these Cartan subspaces, which is not equivalent to regularity in $\mathfrak{g}$.  

\begin{definition}
We say that an element $X$ in $\mathfrak{h}^{\perp}$ is $\theta$-regular if the centralizer $\mathfrak{h}^{\perp}_X$ has minimal dimension among all centralizers of elements in $\mathfrak{h}^{\perp}$.   Otherwise, we say that $X$ is $\theta$-singular. 
\end{definition}

For example, if $\theta$ is the trivial involution, then $\mathfrak{h}^{\perp}$ is trivial, and 0 is $\theta$-regular, but not regular in $\mathfrak{g}$. The expression $\mathfrak{h}^{\perp}_{\theta-reg}$ will denote the set of elements in $\mathfrak{h}^{\perp}$ that are $\theta$-regular.    Often, we will also consider the centralizers of maximal $\theta$-fixed and $\theta$-split tori, which are described in the following lemma.

\begin{lemma} \label{centralizer_lemma} \cite{Vus74}
If $T$ is a maximal torus in $H$, then $Z_G(T)$ is a maximal torus in $G$.  If $T$ is a maximal $\theta$-split torus in $G$ then $[Z_G(T), Z_G(T)] \subset H.$
\end{lemma}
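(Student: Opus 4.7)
The strategy is to treat the two claims separately, in each case reducing to a root-theoretic assertion on a $\theta$-stable maximal torus and arguing by contradiction against the maximality hypothesis. Throughout I use the existence of $\theta$-stable maximal tori of $G$, which is the standard structural fact cited earlier.

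For the first claim, pick a $\theta$-stable maximal torus $S \subseteq G$ containing $T$. Since $T \subseteq H$ is $\theta$-fixed, $T \subseteq S^{+}$; and because $S^{+}$ is a torus of $H$ containing the maximal torus $T$, this forces $S^{+} = T$. Hence $S \subseteq Z_G(T)$, and the claim reduces to showing $\mathfrak{z}_{\mathfrak{g}}(\mathfrak{t}) = \mathfrak{s}$. Under the root-space decomposition of $\mathfrak{g}$ with respect to $\mathfrak{s}$, this amounts to ruling out any root $\alpha$ of $\mathfrak{s}$ satisfying $\alpha|_{\mathfrak{t}} = 0$, equivalently $\theta\alpha = -\alpha$. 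If such an $\alpha$ existed, then for a nonzero root vector $E_\alpha$ the element $E_\alpha + \theta(E_\alpha)$ would be $\theta$-fixed (so in $\mathfrak{h}$) and would commute with $\mathfrak{t}$ (since $\alpha$ and $\theta\alpha$ both vanish on $\mathfrak{t}$); by maximality of $T$ in $H$ it must lie in $\mathfrak{z}_{\mathfrak{h}}(\mathfrak{t}) = \mathfrak{t} \subseteq \mathfrak{s}$. But $E_\alpha + \theta(E_\alpha) \in \mathfrak{g}_\alpha \oplus \mathfrak{g}_{-\alpha}$ meets $\mathfrak{s}$ only in zero, forcing $E_\alpha = 0$, a contradiction.

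For the second claim, set $M := Z_G(T)$, a connected reductive $\theta$-stable subgroup with $T$ central. The goal $[\mathfrak{m}, \mathfrak{m}] \subseteq \mathfrak{h}$ is checked first on a $\theta$-stable maximal torus $\mathfrak{s}' \subseteq [\mathfrak{m}, \mathfrak{m}]$ and then on its root spaces. Decomposing $\mathfrak{s}' = (\mathfrak{s}')^{+} \oplus (\mathfrak{s}')^{-}$, the piece $(\mathfrak{s}')^{-}$ commutes with $\mathfrak{t}$, sits in $\mathfrak{h}^{\perp}$, and consists of semisimple elements, so $\mathfrak{t} + (\mathfrak{s}')^{-}$ would be a larger Cartan subspace of $\mathfrak{h}^{\perp}$ unless $(\mathfrak{s}')^{-} \subseteq \mathfrak{t}$; but then $(\mathfrak{s}')^{-} \subseteq \mathfrak{t} \cap [\mathfrak{m}, \mathfrak{m}] \subseteq Z(\mathfrak{m}) \cap [\mathfrak{m}, \mathfrak{m}] = 0$. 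So $\mathfrak{s}' \subseteq \mathfrak{h}$, whence $\theta$ acts trivially on $\mathfrak{s}'$ and preserves each root space $\mathfrak{g}'_{\alpha} \subseteq [\mathfrak{m}, \mathfrak{m}]$ by some sign $\epsilon_\alpha = \pm 1$. If $\epsilon_\alpha = -1$ for some $\alpha$, an $\mathfrak{sl}_2$-triple $(E_\alpha, H_\alpha, F_\alpha)$ inside $[\mathfrak{m}, \mathfrak{m}]$ furnishes the semisimple element $E_\alpha - F_\alpha$, which lies in $\mathfrak{h}^{\perp}$, commutes with $\mathfrak{t}$ (since $\mathfrak{t}$ centralizes all of $M$), and is nonzero modulo $\mathfrak{t}$ (since $[\mathfrak{m}, \mathfrak{m}] \cap \mathfrak{t} = 0$); this again contradicts the maximality of $\mathfrak{t}$ as a Cartan subspace of $\mathfrak{h}^{\perp}$. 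Hence every $\epsilon_\alpha = +1$, all root spaces lie in $\mathfrak{h}$, and $[\mathfrak{m}, \mathfrak{m}] \subseteq \mathfrak{h}$.

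The main obstacle is the root-by-root step in each part. The rest is routine bookkeeping with $\theta$-stable structure theory. In both parts the core idea is the same: a hypothetical ``wrong-sign'' root would, via a suitable root vector or $\mathfrak{sl}_2$-triple, produce an element enlarging the prescribed maximal torus in $\mathfrak{h}$ or in $\mathfrak{h}^{\perp}$, contradicting the maximality hypothesis.
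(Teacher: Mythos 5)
The paper offers no proof of this lemma at all; it is quoted from Vust's paper \cite{Vus74}, so there is nothing internal to compare your argument with, and it must stand on its own. In outline it does: both halves are the standard sign analysis on root spaces of a $\theta$-stable maximal torus, with the relevant maximality hypothesis excluding the wrong sign, and the first half is essentially complete as written (note only that the $\theta$-stable maximal torus must be chosen to \emph{contain} $T$, which you get by applying the existence of $\theta$-stable maximal tori to the $\theta$-stable reductive group $Z_G(T)$, and that the root-space arguments take place over $\bar{F}$, which is harmless since the asserted inclusions can be checked there).

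Three points in the second half should be tightened. First, you assert $E_\alpha - F_\alpha \in \mathfrak{h}^{\perp}$; this needs $\theta F_\alpha = -F_\alpha$, i.e.\ $\epsilon_{-\alpha} = \epsilon_\alpha$, which follows by applying $\theta$ to $[E_\alpha, F_\alpha] = H_\alpha$ and using that $\theta$ fixes $\mathfrak{s}'$ pointwise --- a one-line check, but it must be made. Second, the statement to be proved is about groups, $[Z_G(T), Z_G(T)] \subset H$, and you only establish $[\mathfrak{m}, \mathfrak{m}] \subset \mathfrak{h}$; in characteristic zero this does suffice, because $\theta$ restricted to the connected group $[M, M]$ then has identity differential and hence is the identity, so $[M, M] \subset G^{\theta}$, but the step should be stated. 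Third, your key input is that $\mathfrak{t}$ is a Cartan subspace of $\mathfrak{h}^{\perp}$, i.e.\ maximal among abelian subspaces of semisimple elements; the paper asserts this just before the lemma, so citing it is legitimate, but in much of the literature that maximality is itself deduced from the centralizer statement you are proving, so you should indicate an independent argument to avoid circularity: for a semisimple $X \in \mathfrak{h}^{\perp}$ centralizing $T$, the $\theta$-split part of the smallest algebraic subgroup whose Lie algebra contains $X$ is a $\theta$-split torus centralizing $T$, so maximality of $T$ forces $X \in \mathfrak{t}$ --- an argument that in fact yields your step $(\mathfrak{s}')^{-} \subseteq \mathfrak{t}$ directly.
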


From the theory of algebraic groups, applied to the subgroup $H_F$, we know that any two maximal $\theta$-fixed tori that are also $F$-split are conjugate by an element of $H_F$.  The same, however, is not necessarily true for $(\theta, F)$-split tori.  In fact, even $SL_2$ provides a counterexample.

\begin{example} \label{example_one}
Take $\mathfrak{g} = \mathfrak{sl}_2$, and $\theta$ to be the involution
\begin{align*}
	\theta : \begin{pmatrix} a & b \\ c & -a \end{pmatrix} \mapsto \begin{pmatrix} -1 & 0 \\ 0 & 1 \end{pmatrix} \begin{pmatrix} a & b \\ c & -a \end{pmatrix} \begin{pmatrix} -1 & 0 \\ 0 & 1 \end{pmatrix} = \begin{pmatrix} a & -b \\ -c & -a \end{pmatrix}. 
\end{align*}
Then $\mathfrak{h} = \mathfrak{t}$, the subspace of diagonal matrices, and
\begin{align*}
    \mathfrak{h}^{\perp} = \left\{ \begin{pmatrix} 0 & b \\ c & 0 \end{pmatrix} : b, c \in F \right\}&.
\end{align*}
For every $c \in F^{\times}$, there is a corresponding Cartan subspace of $\mathfrak{h}^{\perp}$:
\begin{align*}
	\mathfrak{a}_c := \left\{ \begin{pmatrix} 0 & ct \\ t & 0 \end{pmatrix} : t \in F \right\}&.
\end{align*}
Two such Cartan subspaces, say $\mathfrak{a}_c$ and $\mathfrak{a}_d$, are conjugate by an element of $H_F$ if and only if they are conjugate by a diagonal matrix, which implies that $c = d r^4$ for some $r \in F^{\times}$.  In other words, they are conjugate precisely when $c$ and $d$ both represent the same fourth power class.  On the other hand, 
\begin{align*}
	\begin{pmatrix} 0 & ct \\ t & 0 \end{pmatrix}
\end{align*}
is $F$-split precisely when its characteristic polynomial spits over $F$, ie. when $c$ is a square.  When $F$ is $p$-adic, there will exist squares in $F$ that are not fourth powers, and so we can construct two distinct $(\theta, F)$-split tori that are not $H_F$-conjugate.
\end{example}

Some reductive symmetric spaces lack this complication and all maximal $(\theta, F)$-split tori are $H_F$-conjugate.   For most symmetric spaces, however, two maximal $(\theta, F)$-split tori will only be conjugate over a slightly enlarged set.

\begin{thm} \rm (Proposition 10.3 in \cite{HW93}) \it
Let $A_1$ and $A_2$ be two maximal $(\theta, F)$-split tori, and let $A_F$ be a $F$-split maximal torus such that $A_1 \subset A_F \subset G$.  Then
\begin{align*}
g^{-1}A_1g = A_2 \text{ for some } g \in (Z_G(A)H)_F
\end{align*}
where $A$ is the algebraic torus whose $F$-points are $A_F$.
\end{thm}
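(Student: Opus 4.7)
The plan is to prove this by Galois descent from the theorem of Vust cited just above, which produces an $H_{\bar{F}}$-conjugator between $A_1$ and $A_2$ over the algebraic closure. The extra freedom granted by enlarging $H$ to $Z_G(A) H$ is exactly what is needed to kill the resulting cohomological obstruction, because $Z_G(A)$ is an $F$-split torus (indeed, in the split setting of this paper $Z_G(A) = A$) to which Hilbert 90 applies. The preceding $SL_2$ example is the model: the $F^{\times}/F^{\times 4}$ obstruction to $H_F$-conjugacy of the $(\theta, F)$-split Cartan subspaces is killed by a norm-type argument using elements of $A_F \subset Z_G(A)$.

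First, I would invoke Vust's theorem to fix some $h \in H_{\bar{F}}$ with $h^{-1} A_1 h = A_2$, and form the Galois $1$-cocycle $c(\sigma) := \sigma(h) h^{-1}$. Since $A_1$ and $A_2$ are $F$-defined, a short check shows $c(\sigma) \in N_{H_{\bar{F}}}(A_1)$. Because $A_1 \subset A$ implies $Z_G(A) \subset Z_G(A_1)$, conjugation by any element of $Z_G(A)$ is trivial on $A_1$; hence every candidate $g \in (Z_G(A) H)_{\bar{F}}$ satisfying $g^{-1} A_1 g = A_2$ has the form $g = z \cdot n h$ for some $z \in Z_G(A)_{\bar{F}}$ and $n \in N_{H_{\bar{F}}}(A_1)$. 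The problem thus reduces to finding such a pair $(z,n)$ for which the product $znh$ is $F$-rational.

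Second, the rationality constraint $\sigma(znh) = znh$ translates into the cocycle equation $\sigma(z)^{-1} z = \sigma(n) \, c(\sigma) \, n^{-1}$ for every $\sigma \in \mathrm{Gal}(\bar{F}/F)$. To solve this in $z \in Z_G(A)_{\bar{F}}$, the right-hand side must take values in $Z_G(A)_{\bar{F}}$; so by varying $n \in N_{H_{\bar{F}}}(A_1)$ we must arrange the modified cocycle $\sigma \mapsto \sigma(n) c(\sigma) n^{-1}$ to land in the intersection $Z_G(A)_{\bar{F}} \cap N_{H_{\bar{F}}}(A_1)$. Once this is achieved, Hilbert 90 for the $F$-split torus $Z_G(A)$ produces the required $z$, and then $g = znh \in G_F$ is the element we need.

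The main obstacle, I expect, is exactly this cocycle-pushing step: verifying that one can always choose $n$ so that the twisted cocycle lands in $Z_G(A)$. This rests on two structural inputs from the theory of symmetric spaces, namely the equality of relative Weyl groups $W_G(A_1) = W_H(A_1)$ for $\theta$-split $A_1$ (Helminck--Wang), which allows Weyl-group cocycles to be realized using either $N_G$ or $N_H$, and the fact that $Z_G(A) \subset Z_G(A_1)$ contains enough coset representatives to trivialize the image of $c$ in the component group $N_H(A_1)/Z_H(A_1)$. The $SL_2$ computation of the preceding example serves as a concrete template: there the single fourth-power obstruction in $F^{\times}/F^{\times 4}$ is killed by a single application of Hilbert 90 along $A_F = A_1$, and the general argument should reduce in the same way once the Weyl-group bookkeeping is made precise.
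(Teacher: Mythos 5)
There is a genuine gap here, in fact two. First, your opening move misapplies Vust's theorem: that theorem gives transitivity of $H_{\bar{F}}$ on \emph{maximal $\theta$-split} tori, whereas $A_1$ and $A_2$ are maximal \emph{$(\theta,F)$-split} tori, which over $\bar{F}$ are in general proper subtori of maximal $\theta$-split tori (the $F$-rank of $H\backslash G$ can be smaller than the $\bar{F}$-rank). Embedding $A_i$ into maximal $\theta$-split tori $S_i$ and moving $S_2$ to $S_1$ by Vust does not carry $A_2$ to $A_1$, and two subtori of $S_1$ of equal dimension need not be conjugate under $N_{H_{\bar{F}}}(S_1)$; so the existence of your $h\in H_{\bar{F}}$ with $h^{-1}A_1h=A_2$ is not established --- indeed it already follows from the statement being proved (write $g=zh'$ with $z\in Z_G(A)_{\bar{F}}\subset Z_G(A_1)_{\bar{F}}$ and note $h'$ alone conjugates $A_1$ to $A_2$), so assuming it is assuming a nontrivial part of the theorem. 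Second, the step you yourself flag as the main obstacle --- choosing $n\in N_{H_{\bar{F}}}(A_1)$ so that $\sigma\mapsto\sigma(n)c(\sigma)n^{-1}$ lands in $Z_G(A)_{\bar{F}}$ --- is exactly where the content of Proposition 10.3 of \cite{HW93} lies, and the two inputs you invoke do not close it. The equality $W_G(A_1)=W_H(A_1)$ is an over-$\bar{F}$ statement; over $F$ it fails in general, as the paper's own $SL_2$ discussion shows, where $|W_H(T)|$ equals half the number of fourth roots of unity in $F$ while $|W_G(T)|=2$. Moreover, even if the image of the cocycle in the finite group $N_H(A_1)/Z_H(A_1)$ could be twisted away (itself a nontrivial cohomological condition, since arranging a cocycle to take values in a non-normal subgroup after a coboundary twist is not automatic), the residual cocycle takes values in $Z_H(A_1)_{\bar{F}}$, which is neither contained in $Z_G(A)_{\bar{F}}$ nor cohomologically trivial, so Hilbert 90 for the split torus $Z_G(A)$ says nothing about it. In the $SL_2$ example this last issue is invisible precisely because $Z_H(A_1)=\{\pm 1\}\subset A_1$, which is why the baby case looks like a one-line Hilbert 90 argument.

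For comparison, the paper does not prove this statement at all: it is quoted from \cite{HW93}, and the text only summarizes the fragment of the argument showing that a $G_F$-conjugator between two maximal $\theta$-split tori necessarily lies in $(\tilde{T}H)_F$, using $[Z_G(T),Z_G(T)]\subset H$; that is a constraint on a conjugator assumed to exist, not a proof that one exists, and it concerns $\theta$-split rather than $(\theta,F)$-split tori. So your proposal should be judged as an independent outline, and as it stands its two load-bearing steps (the $H_{\bar{F}}$-conjugacy input and the cocycle-pushing into $Z_G(A)$) are unproven; closing the second appears to require the rationality structure theory developed in \cite{HW93} rather than Galois descent plus Hilbert 90 alone.
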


\begin{coro}
All maximal $(\theta, F)$-split tori in $G_F$ have the same rank.
\end{coro}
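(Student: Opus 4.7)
The plan is to deduce this corollary directly from the preceding theorem of Helminck--Wang, since that theorem already exhibits an explicit $F$-rational element conjugating any two maximal $(\theta, F)$-split tori to one another.

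Concretely, let $A_1$ and $A_2$ be two maximal $(\theta, F)$-split tori in $G_F$. The first step is to produce an $F$-split maximal torus $A_F$ of $G$ containing $A_1$. This is where the running hypothesis that $\mathbb{G}$ is split over $\mathcal{O}$ enters: since $A_1$ is an $F$-split torus in a split reductive group, standard structure theory shows that $A_1$ is contained in some maximal $F$-split torus of $G$, and in the split setting this torus is in fact a maximal torus. Let $A$ denote the corresponding algebraic torus, so that $A(F) = A_F$.

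Having chosen $A_F$, we apply the theorem directly to $A_1 \subset A_F$ and to $A_2$. It produces an element $g \in (Z_G(A)H)_F$ with $g^{-1} A_1 g = A_2$. Since $g$ is $F$-rational, conjugation by $g$ is an isomorphism of algebraic groups defined over $F$, and in particular it sends the torus $A_1$ isomorphically onto the torus $A_2$ over $F$. Isomorphic $F$-split tori have the same dimension, hence the same rank.

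I do not expect any real obstacle: the content is entirely in the preceding theorem. The only small point that requires a word of justification is the embedding of $A_1$ into an $F$-split maximal torus so that the theorem applies, and as noted this is immediate from the global hypothesis that $\mathbb{G}$ is split. Thus the corollary follows without further computation.
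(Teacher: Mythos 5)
Your argument is correct and is essentially the paper's own (the paper treats the corollary as an immediate consequence of the preceding Helminck--Wang theorem, exactly as you do): conjugation by the $F$-rational element $g$ carries $A_1$ onto $A_2$, so their dimensions, hence ranks, agree. Your added remark on embedding $A_1$ in an $F$-split maximal torus, using that $\mathbb{G}$ is split, is a reasonable justification of the theorem's hypothesis and does not change the approach.
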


The proof of this theorem is instructive, and we will summarize part of it, as it applies to maximal $\theta$-split tori.  Suppose that $S$ and $T$ are maximal $\theta$-split tori that are conjugate over $G_F$.   Then there exists a $g_S$ in $G_F$ such that
\[  g_S^{-1} t g_S = s \in S \text{ for an arbitrary $t$ in $T$.} \]
By applying $\theta$ to this equation and taking inverses,
\[ \theta(g_S)^{-1} t \theta(g_S) = s.\]
This then implies that $\theta(g_S)g_S^{-1}$ belongs to the centralizer $Z_G(T)$ of $T$.  But the derived group of $Z_G(T)$ is a subgroup of $H$, and so
\[ g_S \in (\tilde{T}H)_F \]
where $\tilde{T}$ is the center of $Z_G(T)$.  One difficulty in the structure theory theory of reductive symmetric spaces is that we cannot say very much more about $g_S$.  Certainly $g_S$ does not necessarily belong to $H_F$, and so one is usually forced to work with this slightly larger set.   We summarize some of these observations as a proposition.

\begin{prop}
If $g_S$ conjugates a maximal $\theta$-split torus $S$ to another maximal $\theta$-split torus $T$, then $g_S$ is an element of the set $(\tilde{T}H)_F$.   In particular, the groups $N_G(T)_F$ and $N_{\tilde{T}H}(T)_F$ are equal and so  elements in the Weyl group $W_G(T)$ have representatives in $(\tilde{T}H)_F$.
\end{prop}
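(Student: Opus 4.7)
The plan is to elaborate the sketch given in the discussion preceding the proposition, which already outlines the essential steps. The main ingredients are the $\theta$-split structure of $S$ and $T$ together with Lemma \ref{centralizer_lemma}.

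First I would derive that $\theta(g_S) g_S^{-1} \in Z_G(T)$. Given $g_S \in G_F$ with $g_S^{-1} S g_S = T$, fix $t \in T$ and put $s := g_S^{-1} t g_S \in S$. Applying $\theta$ and using $\theta(s) = s^{-1}$ and $\theta(t) = t^{-1}$ gives $\theta(g_S)^{-1} t^{-1} \theta(g_S) = s^{-1}$; inverting and comparing with the original relation shows that $\theta(g_S) g_S^{-1}$ centralizes every $t \in T$, so it lies in $Z_G(T)$.

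Next, by Lemma \ref{centralizer_lemma}, $[Z_G(T), Z_G(T)] \subset H$. Combined with the almost-direct decomposition $Z_G(T) = \tilde{T} \cdot [Z_G(T), Z_G(T)]$ of the connected reductive group $Z_G(T)$ with center $\tilde{T}$, this yields the inclusion $Z_G(T) \subset \tilde{T} \cdot H$ of $F$-subvarieties of $G$. The technical heart of the proposition is to upgrade this to the statement $g_S \in (\tilde{T} H)_F$. To do so I would use the map $\tau : g \mapsto \theta(g) g^{-1}$: two direct calculations (both using $\theta|_H = \mathrm{id}$) show that the fibres of $\tau$ are right $H$-cosets and that $\tau(\tilde{t} h) = \theta(\tilde{t}) \tilde{t}^{-1} = \tau(\tilde{t})$ for $\tilde{t} \in \tilde{T}$, $h \in H$. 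Together these imply $\tau^{-1}(\tau(\tilde{T})) = \tilde{T} \cdot H$. Solving the equation $\tau(\tilde{t}) = \tau(g_S)$ in $\tilde{T}(\bar{F})$, which is possible thanks to the surjectivity properties of $\tau|_{\tilde{T}}$ onto its $\theta$-anti-invariants, places $g_S$ in $\tilde{T} \cdot H$ over $\bar{F}$; Galois-invariance of $g_S$ together with the $F$-rationality of the subvariety $\tilde{T} \cdot H$ then yields $g_S \in (\tilde{T} H)_F$.

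For the corollary, specialize to $S = T$: every $g \in N_G(T)_F$ lies in $(\tilde{T} H)_F$, so $N_G(T)_F \subset N_{\tilde{T} H}(T)_F$, and the reverse inclusion is immediate since $\tilde{T} H \subset G$. Representatives in $(\tilde{T} H)_F$ for classes in $W_G(T) = N_G(T)/Z_G(T)$ then follow by choosing lifts in $N_G(T)$. The principal obstacle is the descent step in the previous paragraph, namely the passage from $\tau(g_S) \in \tilde{T} H$ to $g_S \in \tilde{T} H$; it rests on a careful analysis of $\tau$ restricted to $\tilde{T}$ and on the $F$-rationality of the variety $\tilde{T} \cdot H$, since over $F$ the set $(\tilde{T} H)_F$ can be strictly larger than $\tilde{T}_F \cdot H_F$.
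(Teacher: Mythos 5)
Your reduction to $\theta(g_S)g_S^{-1} \in Z_G(T)$ and your appeal to Lemma \ref{centralizer_lemma} follow the paper's own outline, and the final rationality and Weyl-group remarks are fine. The genuine gap is in the descent step. You propose to solve $\tau(\tilde t) = \tau(g_S)$ with $\tilde t \in \tilde{T}(\bar{F})$, citing ``surjectivity of $\tau|_{\tilde T}$ onto its $\theta$-anti-invariants.'' Two things go wrong there. First, at that point you only know $\tau(g_S) \in Z_G(T)$, not $\tau(g_S) \in \tilde{T}$: over $\bar{F}$ one writes $\tau(g_S) = s d$ with $s \in \tilde{T}$ and $d$ in the derived group of $Z_G(T)$ (hence in $H$), and the anti-invariance $\theta(x) = x^{-1}$ only forces $d^2$ into the finite group $\tilde{T} \cap [Z_G(T),Z_G(T)]$; it does not eliminate $d$, so the equation you want to solve is not yet an equation inside $\tilde{T}$. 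Second, even on $\tilde{T}$ the image of $\tau$ is the connected subtorus $\{\theta(t)t^{-1} : t \in \tilde{T}\}$, i.e.\ only the identity component of the (generally disconnected) group of $\theta$-anti-invariant elements of $\tilde{T}$; so the surjectivity claim is false as stated, and you would still need to know that $\tau(g_S)$ lands in that image.

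What this step really requires is structure theory over $\bar{F}$, not a formal manipulation of $\tau$: for instance, Vust's transitivity theorem (quoted earlier in the paper) produces $h \in H_{\bar{F}}$ with $h^{-1}Th = g_S^{-1}Tg_S$, so that $n := g_S h^{-1} \in N_G(T)_{\bar{F}}$, and one then invokes the fact that for a maximal $\theta$-split torus the restricted Weyl group has representatives in $N_H(T)$, i.e.\ $N_G(T)_{\bar{F}} = Z_G(T)_{\bar{F}} N_H(T)_{\bar{F}}$; combined with $Z_G(T) = \tilde{T}\,[Z_G(T),Z_G(T)] \subset \tilde{T}H$ this places $g_S$ in $(\tilde{T}H)(\bar{F})$, hence in $(\tilde{T}H)_F$ since $g_S \in G_F$. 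Equivalently, one must prove $\tau(G) \cap Z_G(T) = \tau(Z_G(T))$, which is precisely the content your solvability assertion hides. The paper itself does not carry out this step either: the proposition is presented as a summary of the proof of Proposition 10.3 of \cite{HW93}, where that input is supplied. (A small notational slip: you begin with $g_S^{-1}Sg_S = T$ but then set $s := g_S^{-1}tg_S \in S$; the computation is the paper's, with $g_S^{-1}Tg_S = S$.)
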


Suppose that we are given a maximal $\theta$-split torus $T$.   From a representative $g_S$ of a double coset in $N_{\tilde{T}H}(T)_F \backslash (\tilde{T}H)_F / H_F$, we can define a torus $S := g_S^{-1} T g_S$ and an isomorphism
\begin{align*}
	T \rightarrow S : &\hspace{2ex} t \mapsto g_S^{-1}tg_S.
\end{align*}
Notice that this isomorphism commutes with the involution $\theta$ and that there is an isomorphism of restricted root systems:
\begin{align*}
	\Phi(T, G) \cong \Phi(S, G).
\end{align*}
In addition, the next proposition shows that the $H_F$-conjugacy classes of maximal $\theta$-split tori in $G_F$ that are conjugate to $T$ are parametrized by elements $g_S$ that precisely represent the double cosets $N_{\tilde{T}H}(T)_F \backslash (\tilde{T}H)_F / H_F$.  We will need this result in section \hyperref[3]{3}.

\begin{prop}
The $H_F$-conjugacy classes of maximal $\theta$-split tori that are $G_F$-conjugate to $T$ are parametrized by elements of $N_{\tilde{T}H}(T)_F \backslash (\tilde{T}H)_F / H_F$:
\begin{align*}
K_{\theta}(F, T) := \left\{ \text{ maximal } \theta\text{-split tori that are $G_F$-conjugate to $T$}\right\} &/ \left( H_F-\text{conjugacy}\right) \\  \leftrightarrow N_{\tilde{T}H}(T)_F \backslash& (\tilde{T}H)_F / H_F. 
\end{align*}
\end{prop}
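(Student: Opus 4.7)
The plan is to construct an explicit bijection
\[ \Psi : N_{\tilde{T}H}(T)_F \backslash (\tilde{T}H)_F / H_F \longrightarrow K_\theta(F, T) \]
by sending a double coset represented by $g$ to the $H_F$-conjugacy class of the torus $g^{-1} T g$. First I would verify that the map $g \mapsto g^{-1} T g$ on $(\tilde{T}H)_F$ lands in maximal $\theta$-split tori $G_F$-conjugate to $T$. Since all $G_F$-conjugates of a maximal $\theta$-split torus have the same dimension, maximality is automatic; the $\theta$-split property reduces to showing $g\theta(g)^{-1} \in Z_G(T)$ for $g \in (\tilde{T}H)_F$, which follows by writing $g = th$ with $t \in \tilde{T}$, $h \in H$ and computing $g\theta(g)^{-1} = t\theta(t)^{-1} \in \tilde{T} \subset Z_G(T)$, using that $\tilde{T}$ is $\theta$-stable (it is the center of the $\theta$-stable subgroup $Z_G(T)$).

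Next I would check that this map descends to the double coset quotient. Right multiplication by $h \in H_F$ replaces $g^{-1} T g$ by $h^{-1}(g^{-1}Tg)h$, which is in the same $H_F$-conjugacy class; left multiplication by $n \in N_{\tilde{T}H}(T)_F$ replaces $g^{-1}Tg$ by $g^{-1}n^{-1}Tng = g^{-1}Tg$. This gives the well-defined map $\Psi$.

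For surjectivity, take any maximal $\theta$-split torus $S$ that is $G_F$-conjugate to $T$, and pick $g_S \in G_F$ with $g_S^{-1} T g_S = S$. The summary discussion preceding the proposition shows that the equation $\theta(g_S)^{-1} T \theta(g_S) = S$ forces $g_S \theta(g_S)^{-1} \in Z_G(T)$, and then $[Z_G(T), Z_G(T)] \subset H$ (Lemma \ref{centralizer_lemma}) gives $g_S \in (\tilde{T}H)_F$. Hence $S = \Psi([g_S])$. For injectivity, suppose $g_1, g_2 \in (\tilde{T}H)_F$ satisfy $h^{-1}(g_1^{-1}Tg_1)h = g_2^{-1}Tg_2$ for some $h \in H_F$. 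Then $g_1 h g_2^{-1}$ normalizes $T$ and lies in $N_G(T)_F$; but the preceding proposition says $N_G(T)_F = N_{\tilde{T}H}(T)_F$, so $g_1 h g_2^{-1} \in N_{\tilde{T}H}(T)_F$. This places $g_1$ and $g_2$ in the same double coset.

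The main obstacle is not computational but conceptual: one has to be sure that every structural input needed is already in hand. The two nontrivial ingredients, namely that any $G_F$-conjugator between two maximal $\theta$-split tori lies in $(\tilde{T}H)_F$, and that $N_G(T)_F = N_{\tilde{T}H}(T)_F$, were both extracted in the discussion just preceding the proposition from Lemma \ref{centralizer_lemma} and Proposition 10.3 of \cite{HW93}. Once these are cited, the proof becomes a short verification that $\Psi$ is well-defined, surjective, and injective, which I expect to present in a single compact paragraph.
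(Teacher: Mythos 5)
Your proposal is correct and follows essentially the same route as the paper: the same explicit map $g \mapsto g^{-1}Tg$ on double cosets, surjectivity drawn from the structural discussion preceding the proposition, and injectivity by showing the comparison element normalizes $T$ and lies in $(\tilde{T}H)_F$. The only real difference is cosmetic: where the paper decomposes $g_{S'}g_S^{-1} \in (\tilde{T}H\tilde{T})_F$ over $\bar{F}$ to place it in $N_{\tilde{T}H}(T)_F$, you invoke the preceding proposition's equality $N_G(T)_F = N_{\tilde{T}H}(T)_F$ directly (and you additionally spell out the well-definedness check that $g^{-1}Tg$ is $\theta$-split, which the paper leaves implicit), both of which are legitimate and, if anything, slightly cleaner.
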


\begin{proof}
This bijection is given explicitly by the map
\[ g_S \mapsto S. \]
We have seen that this map is surjective, so it remains to check that it is also injective.  To wit, if $S = S'$ for representatives $g_S$ and $g_{S'}$ of different double cosets, then $x := g_{S'} g_S^{-1} \in (\tilde{T}H\tilde{T})_F$  normalizes $T$.    But such an $x$ is equal to $t_1ht_2$ for $t_1, t_2 \in T_{\bar{F}}$ and $h \in H_{\bar{F}}$, and therefore $h$ must normalize $T_{\bar{F}}$ and $\tilde{T}_{\bar{F}}$.  This implies that $x = t_1 h t_2 h^{-1} h \in (\tilde{T}H)_F$, and that $g_S$ and $g_{S'}$ both represent the same double coset of $N_{\tilde{T}H}(T)_F \backslash (\tilde{T}H)_F / H_F$, proving injectivity.
\end{proof}

There is another interpretation of these double cosets that is perhaps computationally simpler.   The map $\tau : g \mapsto \theta(g)g^{-1}$ sends $N_{\tilde{T}H}(T)_F \backslash (\tilde{T}H)_F / H_F$ to a set of $\theta$-twisted $N_{\tilde{T}H}(T)_F$-conjugacy classes in $\tau((\tilde{T}H)_F) \subset T_F$.  This perspective provides some intuitive justification that $K_{\theta}(F, T)$ is finite.  A proof may be found in, eg. \cite{Hel97}.

\subsection{A Weyl Integration Formula for $\mathfrak{h}^{\perp}$}

In this section, we will prove an analog of the Weyl integration formula for $\mathfrak{h}^{\perp}$, loosely following part of \cite{RR96}.  This differs from the development of the formula for Lie algebras mainly in the computation of a Jacobian, which requires a couple algebraic tricks.   We present this computation first.

\begin{prop}
Let $T$ be a maximal $\theta$-split torus of $G$ with Lie algebra $\mathfrak{t}$.   Let $U = U(\mathfrak{t})$ be the set of points in $\mathfrak{h}^{\perp}_{\theta-reg}$ whose $H_F$-orbit intersects $\mathfrak{t}$.

\begin{itemize}
\item Set $\mathfrak{t}' := \mathfrak{h}^{\perp}_{\theta-reg} \cap \mathfrak{t}$.  The map
\begin{align*}
\phi_{\mathfrak{t}} : \mathfrak{t}' \times Z_H(T) \backslash H \rightarrow U; \hspace{3ex} (A, h) \mapsto h^{-1}Ah&
\end{align*}
is $|W_H(T)|$-to-one, regular, and surjective.

\item Its Jacobian is 
\begin{align*}
\left| \prod_{\alpha \in \Phi(\mathfrak{t}, G)} \alpha(A)^{m_{\alpha}} \right|^{\frac{1}{2}}&
\end{align*}
where $m_{\alpha}$ is the number of roots of some maximal Cartan subalgebra containing $\mathfrak{t}$ whose restriction to $\mathfrak{t}$ is $\alpha$. 
\end{itemize}
\end{prop}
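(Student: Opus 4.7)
I would begin by establishing surjectivity, which is immediate from the definition of $U$: for any $Y \in U$ there exists $h_0 \in H_F$ with $h_0^{-1}Yh_0 \in \mathfrak{t}$, and since conjugation by $H_F$ preserves $\theta$-regularity, the image actually lies in $\mathfrak{t}'$, so $Y = \phi_{\mathfrak{t}}(h_0^{-1}Yh_0, h_0^{-1})$. For the fiber count, suppose $\phi_{\mathfrak{t}}(A_1, h_1) = \phi_{\mathfrak{t}}(A_2, h_2)$ and set $h := h_2 h_1^{-1}$, so that $\mathrm{Ad}(h)A_1 = A_2$. The crucial observation is that when $A \in \mathfrak{t}$ is $\theta$-regular, the centralizer $\mathfrak{h}^{\perp}_A$ must coincide with $\mathfrak{t}$: it contains the abelian subspace $\mathfrak{t}$ and by definition has the minimal possible dimension, which is the common dimension of all Cartan subspaces. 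Thus $\mathrm{Ad}(h)\mathfrak{t} = \mathrm{Ad}(h)\mathfrak{h}^{\perp}_{A_1} = \mathfrak{h}^{\perp}_{A_2} = \mathfrak{t}$, so $h \in N_H(T)$, and each fiber contains exactly $|W_H(T)|$ elements modulo $Z_H(T)$.

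For the Jacobian, a routine derivative computation gives $d\phi_{\mathfrak{t}}|_{(A,1)}(X', Y) = X' + [A, Y]$ for $X' \in \mathfrak{t}$ and $Y \in \mathfrak{h}/\mathfrak{z}_{\mathfrak{h}}(\mathfrak{t})$; the $H$-equivariance of $\phi_{\mathfrak{t}}$, with respect to right translation on the source and $\mathrm{Ad}$ on the target, reduces the general computation to this case. I would then enlarge $\mathfrak{t}$ to the Lie algebra $\tilde{\mathfrak{t}}$ of $\tilde{T}$, the center of $Z_G(T)$, which is a $\theta$-stable Cartan subalgebra of $\mathfrak{g}$, and introduce the restricted root decomposition $\mathfrak{g} = \mathfrak{z}_{\mathfrak{g}}(\mathfrak{t}) \oplus \bigoplus_{\alpha \in \Phi(\mathfrak{t}, G)} \mathfrak{g}^{\alpha}$, where $\dim \mathfrak{g}^{\alpha} = m_{\alpha}$.

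Since $\theta$ acts as $-1$ on $\mathfrak{t}$, it swaps $\mathfrak{g}^{\alpha}$ with $\mathfrak{g}^{-\alpha}$, and I would pair up restricted roots accordingly. On each block $\mathfrak{g}^{\alpha} \oplus \mathfrak{g}^{-\alpha}$, the $\theta$-eigenspaces $V_{\alpha}^{\pm} := \{X \pm \theta X : X \in \mathfrak{g}^{\alpha}\}$ each have dimension $m_{\alpha}$ and lie in $\mathfrak{h}$ and $\mathfrak{h}^{\perp}$ respectively; a short bracket computation shows that $\mathrm{ad}(A)$ sends $V_{\alpha}^{+}$ to $V_{\alpha}^{-}$ by the scalar $\alpha(A)$ under the natural identification $X + \theta X \leftrightarrow X - \theta X$. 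Assembling these blocks, the differential becomes the identity on $\mathfrak{t}$ plus a block-diagonal map of determinant $\prod_{\{\alpha, -\alpha\}} \alpha(A)^{m_{\alpha}}$, taken over unordered pairs. Because $m_{-\alpha} = m_{\alpha}$, the absolute value of this product equals $\bigl|\prod_{\alpha \in \Phi(\mathfrak{t}, G)} \alpha(A)^{m_{\alpha}}\bigr|^{1/2}$, giving the stated Jacobian. Regularity of $\phi_{\mathfrak{t}}$ on $\mathfrak{t}'$ then follows from the non-vanishing of $\alpha(A)$ for $\theta$-regular $A$.

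The main obstacle is the bookkeeping in this last step: the Jacobian arises most naturally as a product over unordered pairs of restricted roots, and the square root in the stated formula compensates for the redundancy introduced by summing over all of $\Phi(\mathfrak{t}, G)$. Choosing compatible measures on the $\theta$-eigenspaces $V_{\alpha}^{\pm}$ via the involution and the Killing form is the subtlety that makes the scalar $\alpha(A)$ emerge cleanly as the local multiplier of $\mathrm{ad}(A)$, rather than as that factor multiplied by extraneous constants.
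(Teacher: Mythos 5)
Your proposal is correct, and its overall skeleton (surjectivity from the definition of $U$, fiber count via $\mathfrak{h}^{\perp}_A=\mathfrak{t}$ and the orbit--stabilizer theorem, reduction to $h=e$ by equivariance, and the decomposition of $\mathfrak{g}$ into $\mathfrak{z}_{\mathfrak{g}}(\mathfrak{t})$ plus restricted root spaces swapped in pairs by $\theta$) matches the paper's proof; your fiber argument even supplies the justification of $N_H(\mathfrak{h}^{\perp}_{A_1})=N_H(T)$ that the paper leaves implicit. The one place you genuinely diverge is the evaluation of the determinant of $\mathrm{ad}(A):\mathfrak{h}_1\rightarrow\mathfrak{h}_1^{\perp}$. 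You compute it blockwise: the identity $[A,X+\theta X]=\alpha(A)(X-\theta X)$ shows each pair $\{\alpha,-\alpha\}$ contributes $\alpha(A)^{m_{\alpha}}$, and the square root in the stated formula then appears combinatorially because the product over unordered pairs is the square root of the product over all of $\Phi(\mathfrak{t},G)$. The paper instead introduces an auxiliary involution $\sigma$ on $\mathfrak{g}_1$ ($+1$ on the span of the $v_i$, $-1$ on the span of the $\theta(v_i)$) which exchanges $\mathfrak{h}_1$ and $\mathfrak{h}_1^{\perp}$ and commutes with $-\mathrm{ad}(A)$; this forces the Jacobians of the two restricted maps $\mathfrak{h}_1\rightarrow\mathfrak{h}_1^{\perp}$ and $\mathfrak{h}_1^{\perp}\rightarrow\mathfrak{h}_1$ to coincide, so each is the square root of the Jacobian of $\mathrm{ad}(A)$ on all of $\mathfrak{g}_1$, which is $\bigl|\prod_{\alpha}\alpha(A)^{m_{\alpha}}\bigr|$. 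Your route is more direct and elementary; the paper's symmetry argument buys exactly what you flag as the remaining subtlety, namely that one never has to certify that the identification $X+\theta X\leftrightarrow X-\theta X$ is measure-preserving. In either approach that normalization is settled the same way the paper does it: choose the root vectors $v_i$ orthonormal for the Killing form, so that (using residue characteristic greater than $2$) the bases $\{v_i\pm\theta(v_i)\}$ of $\mathfrak{h}_1$ and $\mathfrak{h}_1^{\perp}$ are effectively orthonormal and the Killing-form volume forms make the blockwise scalar $\alpha(A)$ the true local multiplier; with that addition your argument is complete.
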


\begin{proof}
First, we need to show that the map $\phi_{\mathfrak{t}}$ is well-defined.  Indeed, for any $z \in Z_H(T)$ and any $h \in H$, 
\begin{align*}
	\phi_{\mathfrak{t}} (A, zh) &= h^{-1} z^{-1} A z h = h^{-1} A h = \phi_{\mathfrak{t}} (A, h)
\end{align*}
so this map is defined on the set $ \mathfrak{t}' \times Z_H(T) \backslash H$. 

Next, we determine the extent to which injectivity can fail.  Suppose that
\begin{align*}
\phi_{\mathfrak{t}}(A_1, h_1) &= \phi_{\mathfrak{t}}(A_2, h_2)
\end{align*}
Then $h_1^{-1} A_1 h_1 = h_2^{-1} A_2 h_2$, which means that $h_2 h_1^{-1} A_1 h_1 h_2^{-1} = A_2$.  Since $A_1$ and $A_2$ are both elements of $\mathfrak{t}'$, it must be the case that
\begin{align*}
   h_2 h_1^{-1} \in N_H(\mathfrak{h}^{\perp}_{A_1}) = N_H(T).
\end{align*}
$N_H(T)$ therefore acts transitively on the preimage of $h_1^{-1}A_1h_1$ and the stabilizer of any element under this action is $Z_H(T)$.   By the orbit-stabilizer theorem, choosing an element in the preimage of $\phi_{\mathfrak{t}} (A_1, h_1)$ defines a 1-1 correspondence between this preimage and $W_H(T)$.  Since $U$ was chosen so that $\phi_{\mathfrak{t}}$ would be surjective, to complete the proof of this proposition, we only need to compute the Jacobian of this map.

We begin with some general observations about the interplay between $\theta$ and the restricted root system of $T$.  First, because $\mathfrak{t} \subset \mathfrak{h}^{\perp}$, we see that $\theta(\alpha) = -\alpha$ for any $\alpha \in \Phi(\mathfrak{t}, G)$.  Therefore, $\theta$ interchanges the root subspaces $\mathfrak{g}_{\alpha}$ and $\mathfrak{g}_{-\alpha}$. 

Fix a set of positive abstract roots, or, equivalently, a minimal parabolic subgroup over $\bar{F}$ containing $Z_G(\mathfrak{t})$, and then choose an orthonormal basis $\{ v_1, v_2, \dots , v_k \}$ of root vectors for the direct sum of the $\mathfrak{g}_{\alpha}$ with $\alpha > 0$.  Take $\mathfrak{g}_1 := \oplus_{\alpha} \mathfrak{g}_{\alpha}$ so that $\mathfrak{g} = \mathfrak{g}_{\mathfrak{t}} \oplus \mathfrak{g}_1$, where $\mathfrak{g}_{\mathfrak{t}}$ is the centralizer of $\mathfrak{t}$.  Then over $\bar{F}$,
\begin{align*}
\{ v_1, v_2, \dots , v_k\} \cup \{ \theta(v_1), \theta(v_2), \dots , \theta(v_k) \} &\text{ is a basis for } \mathfrak{g}_1; \\
\{ v_1 - \theta(v_1), v_2 - \theta(v_2), \dots , v_k - \theta(v_k) \} &\text{ is a basis for } \mathfrak{h}^{\perp}_1 := \mathfrak{h}^{\perp} \cap \mathfrak{g}_1; \\
\{ v_1 + \theta(v_1), v_2 + \theta(v_2), \dots , v_k + \theta(v_k) \} &\text{ is a basis for } \mathfrak{h}_1 := \mathfrak{h} \cap \mathfrak{g}_1.
\end{align*}
Under the adjoint action, elements of $\mathfrak{t} \subset \mathfrak{h}^{\perp}$ map elements of $\mathfrak{h}_1$ to elements of $\mathfrak{h}^{\perp}_1$ and vice versa.  This map is injective because $\mathfrak{g}_1$ does not intersect $\mathfrak{g}_{\mathfrak{t}}$.

Next, we define an involution $\sigma$ on $\mathfrak{g}_1$ whose $+1$-eigenspace is the vector space generated by the vectors in $\{ v_1, \ldots , v_k\}$ and whose $-1$-eigenspace is the vector space generated by $\{ \theta(v_1), \ldots , \theta(v_k) \}$.   As a consequence of this definition, $\sigma$ interchanges $\mathfrak{h}_1$ and $\mathfrak{h}_1^{\perp}$.  In fact,
\begin{align*}
\sigma \circ \text{ad}(A) \circ \sigma (v_i) = \sigma \circ \text{ad}(A) (v_i) &= \sigma (\alpha_i (A) v_i) = \alpha_i(A) v_i = \text{ad}(A) v_i
\end{align*}
where $\alpha_i$ is the root corresponding to $v_i$.   Similarly,
\begin{align*}
\sigma \circ \text{ad}(A) \circ \sigma (\theta(v_i)) = \sigma \circ \text{ad}(A) (-\theta(v_i)) &= \sigma (-\alpha_i(A) \theta(v_i)) = \alpha_i (A) \theta(v_i) = \text{ad}(A) \theta(v_i)
\end{align*}
and so $\sigma$ commutes with $-\text{ad}(A)$.  

We would like to make the computation of the Jacobian at the point $(A, h)$ explicit with these bases and this involution.   Let $B$ be the Killing form of $\mathfrak{g}$ and notice that the vectors $v_i - \theta(v_i)$ and $v_i + \theta(v_i)$ form an orthogonal basis with respect to it.  Because of our assumptions on the characteristic of $F$, the norm of $B(v_i \pm \theta(v_i), v_i \pm \theta(v_i))$ equals one, so this basis is effectively orthonormal.   We can therefore use these bases to compute the Jacobian, provided we equip the tangent spaces at elements of $U$ and $\mathfrak{t} \times Z_H(T) \backslash H$ with volume forms induced by the Killing form.  eg. the exterior product of the elements of a basis of functionals dual to an orthonormal basis.

Because $\phi_{\mathfrak{t}}$ is $H_F$-equivariant, we can assume that $h = e$, the identity element of $H_F$.  The derivative of $\phi_{\mathfrak{t}}$ is
\begin{align*}
d(\phi_{\mathfrak{t}})_{(A, e)} : \mathfrak{t} \oplus \mathfrak{h}_1 \rightarrow \mathfrak{h}^{\perp}; \hspace{2ex} (Y, X) \mapsto [X, A] + Y.
\end{align*}
This means that the restricted maps 
\begin{align*}
-\text{ad}(A) : \mathfrak{h}_1 \rightarrow \mathfrak{h}^{\perp}_1 \text{ and } -\text{ad}(A) : \mathfrak{h}^{\perp}_1 \rightarrow \mathfrak{h}_1
\end{align*}
have well-defined Jacobians, and by conjugating by $\sigma$, we see that they coincide.   As a consequence,
\begin{align*}
	\left( \text{Jacobian of ad}(X)|_{\mathfrak{h}_1} \right)^2 &= \left( \text{Jacobian of ad}(X)|_{\mathfrak{g}_1} \right) 
	= \left| \prod_{\alpha \in \Phi(\mathfrak{t}, G)} \alpha(X)^{m_{\alpha}} \right|.
\end{align*}
The sought Jacobian is
\[	\left( \text{Jacobian of ad}(X)|_{\mathfrak{h}_1} \right) = \left| \prod_{\alpha \in \Phi(\mathfrak{t}, G)} \alpha(X)^{m_{\alpha}} \right|^{\frac{1}{2}} \]
which completes this proof.
\end{proof}

Before we turn to the derivation of the formal development of the $\theta$-split side of the trace formula, we make a few remarks about the proof of this proposition.   First, the Jacobian
\[ \left| \prod_{\alpha \in \Phi(\mathfrak{t}, G)} \alpha(X)^{m_{\alpha}} \right|^{\frac{1}{2}} \]
has been adjusted by a constant, according to our choice of measures, and might not equal the norm of an element of $F$.   Nevertheless, it will still be a well-defined real number.  Second, the normalizing constant $|W_H(T)|$ depends on the underlying field $F$, even when $T$ is split over $F$.   For example, in the case of $SL_2$, equipped with the involution of example \ref{example_one}, this constant equals half the number of fourth roots of unity in $F$ when $T$ is a maximal $(\theta, F)$-split torus.   This particular dependence on $F$, however, will disappear by the time we state the relative local trace formula in a final form.

We also mention two corollaries to the proof of this proposition.  First, if $\mathfrak{t} \subset \mathfrak{h}^{\perp}$ is a Cartan subalgebra, and $A \in \mathfrak{t}$ is semisimple, then for some r,
\begin{align*}
	\text{det}(\text{Ad}(A) + \lambda I ; \mathfrak{g}) &= \left( \prod_{\alpha \in \Phi(\mathfrak{t}, G)} \alpha(A)^{m_{\alpha}} \right) \lambda^r + \cdots
\end{align*}
So $A \in \mathfrak{t}'$ if and only if this leading coefficient (of $\lambda^r$), which is a polynomial over $F$, does not vanish.  Second, we can break up the Zariski-open set of all $\theta$-regular semisimple elements of $\mathfrak{h}^{\perp}$ into a disjoint union of sets of the form $U(\mathfrak{t})$:
\begin{align*}
 \coprod_{\mathfrak{t} \subset \mathfrak{h}^{\perp}} U(\mathfrak{t}) = \coprod_{\mathfrak{t} \subset \mathfrak{h}^{\perp}} \text{Ad}(H) (\mathfrak{t}').
\end{align*}
Here the disjoint sum runs over a set of representatives of the $H_F$-conjugacy classes of Cartan subalgebras $\mathfrak{t}$ in $\mathfrak{h}^{\perp}$.  

\begin{thm}
Let $f \in C_c^{\infty}(\mathfrak{g})$.   Then
\begin{align*}
\int_{\mathfrak{h}^{\perp}} f(X)\; dX &= \sum_{T \in \mathcal{T}_{G/H}} \frac{1}{|W_H(T)|} \int_{\mathfrak{t}'} \left| \prod_{\alpha \in \Phi(\mathfrak{t}, G)} \alpha(X)^{m_{\alpha}} \right|^{\frac{1}{2}} \int_{Z_H(T) \backslash H} f (h^{-1}Xh)\; \dot{dh}\; dX
\end{align*}
where $\mathcal{T}_{G/H}$ denotes a set of representatives of the $H_F$-conjugacy classes of maximal $\theta$-split tori in $G_F$.
\end{thm}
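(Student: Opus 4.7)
The plan is to integrate $f$ by first restricting to the set of $\theta$-regular semisimple elements of $\mathfrak{h}^\perp$ and then using the parametrization $\phi_{\mathfrak{t}}$ from the preceding proposition to pull the integral back to $\mathfrak{t}' \times Z_H(T)\backslash H$ on each piece.

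First, I would argue that removing the $\theta$-singular locus does not change the integral. By the first corollary following the proposition, $\mathfrak{t}'$ is cut out inside $\mathfrak{t}$ by the non-vanishing of a polynomial (the leading coefficient of the characteristic polynomial of $\mathrm{ad}(A)$), so its complement in $\mathfrak{t}$ has measure zero; sweeping by $H_F$ and summing over the finitely many $H_F$-conjugacy classes of Cartan subspaces, the complement of the $\theta$-regular semisimple locus in $\mathfrak{h}^\perp$ is a finite union of orbits of positive codimension, hence of measure zero. Therefore
\[
\int_{\mathfrak{h}^\perp} f(X)\,dX = \int_{\mathfrak{h}^\perp_{\theta\text{-reg}}} f(X)\,dX.
\]

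Next I would use the disjoint decomposition
\[
\mathfrak{h}^\perp_{\theta\text{-reg}} = \coprod_{T \in \mathcal{T}_{G/H}} U(\mathfrak{t})
\]
recorded just before the theorem. Splitting the integral accordingly reduces the claim to showing, for each $T$, that
\[
\int_{U(\mathfrak{t})} f(X)\,dX = \frac{1}{|W_H(T)|} \int_{\mathfrak{t}'} \left|\prod_{\alpha \in \Phi(\mathfrak{t},G)} \alpha(X)^{m_\alpha}\right|^{\frac{1}{2}} \int_{Z_H(T)\backslash H} f(h^{-1}Xh)\,\dot{dh}\,dX.
\]
This is a direct application of the change-of-variables formula to $\phi_{\mathfrak{t}}$. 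The previous proposition supplies exactly the two ingredients needed: the map is surjective onto $U(\mathfrak{t})$ and $|W_H(T)|$-to-one (which produces the factor $1/|W_H(T)|$), and its Jacobian (with respect to the volume forms on source and target induced by the Killing form) equals $\left|\prod_{\alpha}\alpha(X)^{m_\alpha}\right|^{1/2}$. Pushing forward the product measure on $\mathfrak{t}' \times Z_H(T)\backslash H$ through $\phi_{\mathfrak{t}}$ therefore yields the claimed identity on each piece.

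The main thing to be careful about is the measure-theoretic setup: one must check that the measure on $\mathfrak{h}^\perp$ coming from the Killing form agrees with the product measure on $\mathfrak{t}' \times (Z_H(T)\backslash H)$ up to the stated Jacobian, and that the quotient measure $\dot{dh}$ on $Z_H(T)\backslash H$ is normalized compatibly with the choice of Haar measure on $Z_H(T)$ used implicitly to form the quotient. Since $\phi_{\mathfrak{t}}$ is $H_F$-equivariant, as noted in the proof of the preceding proposition, verifying the Jacobian identification at $(A,e)$ suffices, and at that point the computation has already been reduced to the determinant of $\mathrm{ad}(A)$ on $\mathfrak{h}_1$, which the proposition identifies with $\left|\prod_{\alpha}\alpha(A)^{m_\alpha}\right|^{1/2}$. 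Summing the resulting identities over $T \in \mathcal{T}_{G/H}$ finishes the proof.
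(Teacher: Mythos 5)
Your proposal is correct and takes essentially the same route as the paper: discard the $\theta$-singular locus as a null set, split $\mathfrak{h}^{\perp}_{\theta\text{-}reg}$ into the disjoint pieces $U(\mathfrak{t})$ indexed by $T \in \mathcal{T}_{G/H}$, and pull each piece back through $\phi_{\mathfrak{t}}$ using its $|W_H(T)|$-to-one surjectivity and the Jacobian computed in the preceding proposition. Your justification of the measure-zero step via the non-vanishing of the leading coefficient polynomial is in fact a bit more explicit than the paper's one-line density remark, but the argument is the same.
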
 

\begin{proof}
Because the set of semisimple elements that belong to some $\mathfrak{t}'$ is dense in $\mathfrak{h}^{\perp}$,
\begin{align*}
	\int_{\mathfrak{h}^{\perp}} f(X) \; dX &= \sum_{T \in \mathcal{T}_{G/H}} \int_{U(\mathfrak{t})} f(X) \; dX.
\end{align*}
We can then pull back each summand by the corresponding $\phi_{\mathfrak{t}}$, and this sum becomes
\begin{align*}
\sum_{T \in \mathcal{T}_{G/H}} \frac{1}{|W_H(T)|} \int_{\mathfrak{t}'} \left| \prod_{\alpha \in \Phi(\mathfrak{t}, G)} \alpha(X)^{m_{\alpha}} \right|^{\frac{1}{2}} \int_{Z_H(T) \backslash H} f(h^{-1}Xh) \; \dot{dh} \; dX
\end{align*}
by the previous proposition.
\end{proof}

\subsection{A Formal Expression for the $\theta$-split Side}

The next integration formula provides an expression for one side of the trace formula for an unspecified function $\phi$ in $C_c^{\infty}(H\backslash G)$.   We will derive and study this formula for unspecified weight factors for the remainder of this section.   In section \hyperref[3]{3}, we will choose and study a particular $\phi$ that generalizes Arthur's approach to this setting.

\begin{thm} \label{formal_identity}
Let $f$ be a function in $C_c^{\infty}(\mathfrak{g})$.   Then
\begin{align*}
\int_{H \backslash G} &\int_{\mathfrak{h}} \hat{f}(g^{-1}Xg) \, dX\,  \phi(g) \, \dot{dg} \\
&= \sum_{T \in \mathcal{T}_{G/H}} \frac{1}{|W_H(T)|} \int_{\mathfrak{t}'} \left| \prod_{\alpha \in \Phi(\mathfrak{t}, G)} \alpha(X)^{m_{\alpha}} \right|^{\frac{1}{2}} \int_{Z_G(T) \backslash G} f (g^{-1}Xg) \, \omega_T (g) \, \dot{dg}\; dX
\end{align*}
where
\begin{align*}
     \omega_T(g) &:= \int_{Z_H(T) \backslash Z_G(T)} \phi(tg) \; \dot{dt}
\end{align*}
for any left $H$-invariant function function $\phi \in C_c^{\infty}(H \backslash G)$.   The function $\omega_T$ is called a weight factor.
\end{thm}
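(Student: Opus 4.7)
The plan is to reduce this identity to a composition of two ingredients already in hand: the infinitesimal Plancherel identity (displayed in the introduction) and the Weyl integration formula for $\mathfrak{h}^{\perp}$ (the previous theorem). The Plancherel identity converts the left-hand side into an integral over $\mathfrak{h}^{\perp}$ of $f$, and then the Weyl integration formula transforms the $\mathfrak{h}^{\perp}$-integral into a sum over the $H_F$-conjugacy classes of maximal $\theta$-split tori. After that, the outer integration over $H \backslash G$ is reorganized using iterated-integral formulas for Haar measures on homogeneous spaces.

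First, for each $g \in G$, the conjugated function $X \mapsto f(g^{-1}Xg)$ lies in $C_c^{\infty}(\mathfrak{g})$, so pointwise in $g$ the Plancherel identity gives
\begin{align*}
\int_{\mathfrak{h}} \hat{f}(g^{-1}Xg)\, dX &= \int_{\mathfrak{h}^{\perp}} f(g^{-1}Xg)\, dX.
\end{align*}
Substituting into the left-hand side and then applying the previous theorem (Weyl integration on $\mathfrak{h}^{\perp}$) to the inner integral produces, after interchanging the order of integration by Fubini, the triple sum and integral
\begin{align*}
\sum_{T \in \mathcal{T}_{G/H}} \frac{1}{|W_H(T)|} \int_{\mathfrak{t}'} \left| \prod_{\alpha \in \Phi(\mathfrak{t}, G)} \alpha(X)^{m_\alpha} \right|^{\frac{1}{2}} \int_{H \backslash G} \phi(g) \int_{Z_H(T) \backslash H} f(g^{-1} h^{-1} X h g)\, \dot{dh}\, \dot{dg}\, dX.
\end{align*}

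Next I would fold the two outer $G$-integrals. Because $\phi$ is left $H$-invariant, the change of variables $g' := hg$ together with the standard measure decomposition $H \backslash G \times Z_H(T) \backslash H \cong Z_H(T) \backslash G$ rewrites the pair as a single integral over $Z_H(T) \backslash G$ of $\phi(g') f(g'^{-1} X g')$. To extract the weight factor $\omega_T$, I then split $Z_H(T) \backslash G$ as an iterated quotient through $Z_G(T)$, writing
\begin{align*}
\int_{Z_H(T) \backslash G} F(g')\, \dot{dg'} &= \int_{Z_G(T) \backslash G} \int_{Z_H(T) \backslash Z_G(T)} F(tg)\, \dot{dt}\, \dot{dg}.
\end{align*}
Since $X \in \mathfrak{t}$ and $t \in Z_G(T)$, the orbital integrand $f(g^{-1}t^{-1}Xtg) = f(g^{-1}Xg)$ is independent of $t$, so $t$ appears only through $\phi(tg)$. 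Recognizing $\int_{Z_H(T) \backslash Z_G(T)} \phi(tg)\, \dot{dt}$ as $\omega_T(g)$ completes the identification with the right-hand side.

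The routine part is the bookkeeping of Haar measures and the Fubini interchanges; these are legitimate because $f$ has compact support in $\mathfrak{g}$, $\phi$ has compact support modulo $H$, and both inner integrals are effectively over compact sets once $X$ is restricted via Weyl integration. The only genuinely delicate step is justifying the $g$-by-$g$ application of the Plancherel identity under the outer integral: one must know that the function $(g, X) \mapsto f(g^{-1}Xg)$ on $(H \backslash G) \times \mathfrak{h}^{\perp}$ is integrable so that the double integral makes sense unambiguously, and similarly on the $\mathfrak{h}$ side after Fourier transform. Apart from this point — which is handled by the compact support of $\phi$ together with a uniform-in-$g$ Schwartz-type bound on $\hat{f}(g^{-1}\cdot g)$ restricted to $\mathfrak{h}$ — every other manipulation is purely formal.
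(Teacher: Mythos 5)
Your proposal is correct and follows essentially the same route as the paper's proof: pointwise application of the modified Plancherel identity, the Weyl integration formula for $\mathfrak{h}^{\perp}$, and integration by stages through $Z_H(T)\backslash G \cong Z_G(T)\backslash G \times Z_H(T)\backslash Z_G(T)$ to isolate $\omega_T$. Your explicit remark that $f(g^{-1}t^{-1}Xtg)=f(g^{-1}Xg)$ because $t \in Z_G(T)$ acts trivially on $\mathfrak{t}$ is a detail the paper uses silently, and is a welcome clarification.
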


\begin{proof}
Because the Fourier transform commutes with the coadjoint action of $G$ on $C_c^{\infty}(\mathfrak{g})$, one has a modified Plancherel identity:
\begin{align*}
	\int_{\mathfrak{h}} \hat{f}(g^{-1}Xg)\; dX &= \int_{\mathfrak{h}^{\perp}} f(g^{-1}Xg)\; dX.
\end{align*}
Each side of this identity depends as a function of $g$ only on the coset $Hg$, so we can integrate over the symmetric space.
\begin{align}
	\int_{H\backslash G} \phi(g) \int_{\mathfrak{h}} \hat{f}(g^{-1}Xg) \, dX\; \dot{dg} &= \int_{H \backslash G} \phi(g) \int_{\mathfrak{h}^{\perp}} f(g^{-1}Xg) \, dX\; \dot{dg}
\end{align}
for any left $H$-invariant $\phi \in C_c^{\infty} (H \backslash G)$, which we introduce to guarantee convergence.  By the Weyl integration formula for $\mathfrak{h}^{\perp}$,   (1) equals
\begin{align*}
& \int_{H \backslash G} \sum_{T \in \mathcal{T}_{G/H}} \frac{1}{|W_H(T)|} \int_{\mathfrak{t}'} \left| \prod_{\alpha \in \Phi(\mathfrak{t}, G)} \alpha(X)^{m_{\alpha}} \right|^{\frac{1}{2}} \int_{Z_H(T) \backslash H} f((hg)^{-1}X(hg))\, \phi(g)\, \dot{dh} \, dX\, \dot{dg}.
\end{align*}
The integration by stages formula for algebraic groups allows us to change variables from  $H \backslash G \times Z_H(T) \backslash H$ to $Z_H(T) \backslash G$ and from $Z_H(T) \backslash G$ to $Z_G(T) \backslash G \times Z_H(T) \backslash Z_G(T).$   By this formula, and a change in the order of integration, we obtain
\begin{align*}
\sum_{T \in \mathcal{T}_{G/H}} \frac{1}{|W_H(T)|} \int_{\mathfrak{t}'} \left| \prod_{\alpha \in \Phi(\mathfrak{t}, G)} \alpha(X)^{m_{\alpha}} \right|^{\frac{1}{2}} \int_{Z_G(T) \backslash G} f(g^{-1}Xg)  \left(  \int_{Z_H(T) \backslash Z_G(T)} \phi(tg)\, \dot{dt} \right) \dot{dg}\, dX
\end{align*}
which completes the proof.
\end{proof}
We call the left hand side of this identity the $\theta$-fixed side of the trace formula.  The right hand side is called the $\theta$-split side.  The integrals over the $G_F$-orbits of $\theta$-regular semisimple elements in $\mathfrak{h}^{\perp}$ will be called $\theta$-split orbital integrals.  We set
\[D^G_{Z_G(T)} (X) :=   \prod_{\alpha \in \Phi(\mathfrak{t}, G)} \alpha(X)^{m_{\alpha}}\]
where $\mathfrak{t}$ is a Cartan subalgebra containing $X$ in $\mathfrak{h}^{\perp}$.   

\subsection{Preliminary Refinements}

Before we introduce a specific weight factor, we will refine this expression for the $\theta$-split side in three ways:  first, we simplify the expression for $\omega_T(\cdot, \mu)$;  second, we refine the sum over the set $\mathcal{T}_{G/H}$;  and third, we shift the dependence of $\omega_T$ on tori $T$ to Levi subgroups parametrizing subsets of $\mathcal{T}_{G/H}$.

\subsubsection{Centralizers of Tori}

The weight factors of theorem \ref{formal_identity} are expressed as integrals over reductive symmetric spaces of the form $Z_H(T) \backslash Z_G(T)$, where $T$ is a maximal $\theta$-split torus.   In this subsection, we consider the image of this quotient for maximal $\theta$-fixed and $\theta$-split tori under the map
\[ \tau : H \backslash G \rightarrow G; \hspace{5ex} g \mapsto \theta(g)^{-1}g. \]
In both cases, this image will be a subgroup of the $F$-points of an abelian group containing a $\theta$-split torus.  For $\theta$-fixed tori, this follows from lemma \ref{centralizer_lemma}.

\begin{lemma} \rm \cite{Vus74} \it
$Z_H(T) \backslash Z_G(T)$ is a quotient of tori, and $\tau(Z_H(T) \backslash Z_G(T))$ is a subset of the torus $Z_G(T)^-$. 
\end{lemma}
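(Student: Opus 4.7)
The plan is to leverage lemma \ref{centralizer_lemma} (which asserts $[Z_G(T), Z_G(T)] \subset H$ when $T$ is a maximal $\theta$-split torus) together with a direct computation showing $\tau$ lands in the anti-invariants, and then a connectedness argument to conclude it lands in the identity component.

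For the first claim, I would start by noting that the derived subgroup satisfies $[Z_G(T), Z_G(T)] \subset Z_G(T) \cap H = Z_H(T)$. Any subgroup containing the derived subgroup of a connected group is normal, so $Z_H(T)$ is normal in $Z_G(T)$ and the quotient is a group. Since $T$ is a torus of $G$, its centralizer $Z_G(T)$ is a connected reductive group (a Levi), hence its abelianization $T_1 := Z_G(T)/[Z_G(T), Z_G(T)]$ is a torus. The image of $Z_H(T)$ in $T_1$ is then a closed subgroup $T_2$, and we have an identification
\[ Z_H(T) \backslash Z_G(T) \;\cong\; T_2 \backslash T_1, \]
exhibiting this homogeneous space as a quotient of tori.

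For the second claim, I would first check that $\tau$ actually maps into $Z_G(T)$. Since $T$ is $\theta$-split, $\theta(T) = T$, so $\theta$ preserves $Z_G(T) = Z_G(\theta(T))$; for $g \in Z_G(T)$ we get $\tau(g) = \theta(g)^{-1} g \in Z_G(T)$. A direct computation, using nothing more than that $\theta$ is an involutive group homomorphism, gives
\[ \theta(\tau(g)) = \theta(\theta(g)^{-1} g) = g^{-1}\theta(g) = (\theta(g)^{-1} g)^{-1} = \tau(g)^{-1}, \]
so $\tau(g)$ lies in the set of anti-$\theta$-invariants of $Z_G(T)$. Finally, since $Z_G(T)$ is connected as an algebraic group and $\tau$ is a regular map sending the identity to the identity, the image of $\tau$ is a connected subset of the anti-$\theta$-invariants containing $1$, hence contained in its identity component $Z_G(T)^-$. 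Combined with the fact that $\tau(h g) = \tau(g)$ for $h \in H$, this yields $\tau(Z_H(T) \backslash Z_G(T)) \subset Z_G(T)^-$.

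The only potentially delicate point is the connectedness step in the last paragraph: one must take care that the conclusion $\tau(g) \in Z_G(T)^-$ (the identity component) rather than merely in the full set of anti-invariants is justified at the level of $F$-points. This is not a real obstacle here, however, since the image of $\tau$ restricted to the connected algebraic group $Z_G(T)$ is itself connected and contains the identity, so even on $F$-points the image lies in the subgroup cut out by the identity component. All other steps are essentially formal, driven by the containment $[Z_G(T),Z_G(T)] \subset H$ and the structure of $\tau$.
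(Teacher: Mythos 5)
Your argument is sound step by step, but it is built on the wrong case of lemma \ref{centralizer_lemma}, and in doing so it essentially reproves a variant of the \emph{next} lemma in the paper rather than this one. In context, the present lemma is the $\theta$-fixed case: here $T$ is a maximal torus of $H$, the sentence immediately preceding the lemma says the claim ``follows from lemma \ref{centralizer_lemma}'', and the relevant assertion of that lemma is its \emph{first} half, namely that $Z_G(T)$ is then a maximal torus of $G$. That makes the statement nearly immediate: $Z_G(T)$ is an abelian, $\theta$-stable torus, so $Z_H(T)\backslash Z_G(T)$ is a quotient of a torus by a diagonalizable subgroup, and $\tau$ restricted to $Z_G(T)$ is the homomorphism $t \mapsto \theta(t)^{-1}t$, whose image is a connected subgroup of $\{t \in Z_G(T) : \theta(t)=t^{-1}\}$ and hence lies in $Z_G(T)^-$. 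You instead take $T$ to be a maximal $\theta$-split torus and anchor the proof on $[Z_G(T),Z_G(T)]\subset H$; that containment is exactly the input for the following lemma (whose conclusion is that the image of $\tau$ lies in $\tilde T_F$, the center of $Z_G(T)$), and the paper's proof of that lemma, writing $g = sd$ with $s$ central and $d$ in the derived group, is in substance your abelianization step.

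Two consequences of the misreading should be flagged. First, your justification of $[Z_G(T),Z_G(T)]\subset Z_H(T)$ quotes a hypothesis ($T$ maximal $\theta$-split) that is not the one in force for this lemma; in the $\theta$-fixed case the containment is still true, but only trivially, because $Z_G(T)$ is abelian, and knowing that requires the other half of lemma \ref{centralizer_lemma}. Second, under your reading $Z_G(T)$ need not be a torus, so the phrases ``quotient of tori'' and ``the torus $Z_G(T)^-$'' only make sense after reinterpretation, since the paper defines $(\cdot)^-$ only for tori. Your anti-invariance computation and the connectedness argument (including the remark about $F$-points) are correct, and in fact your route gives a uniform treatment of both cases, so the proof is repairable by replacing the cited hypothesis with the first assertion of lemma \ref{centralizer_lemma}; but as written it addresses a different torus than the lemma does, and it is longer than the paper's intended argument, which reduces everything to the fact that $Z_G(T)$ is a maximal torus.
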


In the definition of a weight factor in the statement of theorem \ref{formal_identity}, on the other hand, we encounter the symmetric space $Z_H(T) \backslash Z_G(T)$, where $T$ is some maximal $\theta$-split torus.   The integrand, however, will depend only on the image of an element under the map $\tau$.  

\begin{lemma}
Remember that we have written $\tilde{T}$ for the center of $Z_G(T)$.   The image of $Z_H(T) \backslash Z_G(T)$ under the map $\tau$ is an abelian subgroup of the $F$-points of $\tilde{T}$.
\end{lemma}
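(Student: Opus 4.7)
The plan is to reduce the computation of $\tau(g) = \theta(g)^{-1}g$ for $g \in Z_G(T)$ to a calculation inside the central torus $\tilde{T}$, by exploiting the structure of $Z_G(T)$ given in Lemma \ref{centralizer_lemma}.

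First I would invoke Lemma \ref{centralizer_lemma} to get $[Z_G(T), Z_G(T)] \subset H$. Since $Z_G(T)$ is reductive with central torus $\tilde{T}$, this yields an almost direct product decomposition $Z_G(T) = \tilde{T} \cdot H_0$, where $H_0 := [Z_G(T), Z_G(T)] \subset H$. Over $\bar{F}$, any $g \in Z_G(T)_F$ can therefore be written as $g = s h$ with $s \in \tilde{T}_{\bar{F}}$ and $h \in H_0 \subset H_{\bar{F}}$.

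The key computation is then immediate. Because $h \in H$ we have $\theta(h) = h$, and because $s$ lies in the center of $Z_G(T)$, it commutes with $h$ and $\theta(s)^{-1}s$ does as well. Hence
\[ \tau(g) \;=\; \theta(g)^{-1} g \;=\; (\theta(s)h)^{-1}(sh) \;=\; h^{-1}\theta(s)^{-1}s\, h \;=\; \theta(s)^{-1}s \;\in\; \tilde{T}_{\bar{F}}. \]
Although the decomposition $g = sh$ is in general only rational over $\bar{F}$, the element $\tau(g)$ is manifestly $F$-rational, so
\[ \tau(g) \in G_F \cap \tilde{T}_{\bar{F}} = \tilde{T}_F, \]
which is the desired containment.

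Finally, $\tilde{T}$ is abelian, so the image is abelian; and multiplicativity of $\tau$ on $Z_G(T)$ follows from the same calculation, since if $g_i = s_i h_i$ then centrality of $s_1, s_2$ gives $g_1 g_2 = s_1 s_2 \, h_1 h_2$ with $h_1 h_2 \in H_0$, whence $\tau(g_1 g_2) = \theta(s_1 s_2)^{-1}(s_1 s_2) = \tau(g_1)\tau(g_2)$. Well-definedness on $Z_H(T) \backslash Z_G(T)$ is inherited from well-definedness of $\tau$ on $H \backslash G$. The only genuine subtlety is the $F$-rationality of the decomposition $g = sh$, which I expect to be the main obstacle; but as indicated it is bypassed by computing over $\bar{F}$ and then using that $\tau(g)$ itself is $F$-rational to descend.
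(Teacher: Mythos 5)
Your proof is correct and follows essentially the same route as the paper: decompose $g = sd$ over $\bar{F}$ using Lemma \ref{centralizer_lemma} and the almost direct product of $\tilde{T}$ with the derived group, compute $\tau(g) = \theta(s)^{-1}s \in \tilde{T}_{\bar{F}}$ by centrality, and descend to $\tilde{T}_F$ via the $F$-rationality of $\tau(g)$. Your explicit check that $\tau$ is multiplicative on $Z_G(T)$ (so the image is genuinely a subgroup) is a detail the paper leaves implicit, but the argument is the same in substance.
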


\begin{proof}
Over the algebraic closure, any $g \in Z_G(T)$ can be written as a product $sd$ where $s$ belongs to the center of $Z_G(T)_{\bar{F}}$ and $d$ belongs to its derived group.   By lemma \ref{centralizer_lemma}, the derived group $[Z_G(T), Z_G(T)]$ is a subgroup of $H$, and so we can compute $\tau$ at $g$:
\[ \tau(Z_H(T) sd) = \tau(Z_H(T)s) = \theta(s)^{-1} s \in \tilde{T}_{\bar{F}}. \]
Because 
\[ Z_H(T)_F \backslash Z_G(T)_F \hookrightarrow (Z_H(T) \backslash Z_G(T))_F \]
is sent by $\tau$ to $\tilde{T}_F$, the result follows. 
\end{proof}

Notice that this image need not be the entire set $\tilde{T}_F$.  For example, if $G$ is itself a $\theta$-split torus, then $\tau$ can only equal squares.

\subsubsection{Sums Over $H_F$-conjugacy Classes}

We also wish to refine the sum over $\mathcal{T}_{G/H}$ in theorem \ref{formal_identity} in two ways.   First, we would like to group elements of $\mathcal{T}_{G/H}$ according to $G_F$-conjugacy.  Second, we would like to make the contribution from each Levi subgroup of $G$ more apparent.  To accomplish these goals, we require some notation.

Fix an $F$-split $\theta$-stable torus $A$ that contains a maximal $(\theta, F)$-split torus $A^-$ and satisfies $A(\mathcal{O}) = A \cap K$.   The existence of such a torus follows from the existence of a $\theta$-stable torus in any $\theta$-stable reductive subgroup, including $Z_G(A^-)$ (see \cite{Hel91}).   For any Levi subgroup $M_F$, let $A_M$ denote the $F$-split component of the center of $M_F$.  Set $\mathcal{L}^-$ equal to the set of Levi subgroups $M_F$ in $G_F$ containing $A$ that are equal to the centralizer of $A_M^{-}$.  The involution $\theta$ therefore preserves elements of $\mathcal{L}^-$.  For each $M$ in $\mathcal{L}^-$, we choose a system of representatives for the $M_F$-conjugacy classes of tori that contain some maximal $\theta$-split tori $T$ that is elliptic in $M_F$.  This implies that $A_M^{-}$ is the $(\theta, F)$-split part of $T$.   Let $\mathcal{T}_M$ denote this system of representatives.    We also choose a system of representatives for the $H_F$-conjugacy classes of maximal $\theta$-split tori in $G_F$ that are $G_F$-conjugate to $T$, and let $\mathcal{T}_{H \backslash G}(T)$ denote this system.

As with the next three lemmas, all of this notation is modified from notation in \cite{Kot06}, and we follow closely the development given there.  Our goal is to replace the sum over $\mathcal{T}_{H \backslash G}$ with an iterated sum over $\mathcal{L}^-$, $\mathcal{T}_M$, and $\mathcal{T}_{H \backslash G}(T)$.

\begin{lemma}
Suppose that $T \in \mathcal{T}_M$ and that $g^{-1}Tg \subset M$ is a $\theta$-split torus for some $g \in G_F$.   Then $g \in N_{\tilde{T}H}(M)$.
\end{lemma}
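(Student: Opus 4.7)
The plan is to prove the lemma in three main steps: pin down the nature of $T$, invoke the preceding proposition to control $g$, and finally verify that $g$ normalizes $M$.

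First I would observe that the hypothesis forces $T$ itself to be a maximal $\theta$-split torus of $G$. Since $T \in \mathcal{T}_M$, it contains a maximal $\theta$-split torus $T^-$ of $G$ whose $F$-split part is $A_M^-$, so $\dim T \geq \dim T^-$. On the other hand, $g^{-1}Tg$ is $\theta$-split of dimension $\dim T$, and no $\theta$-split torus in $G$ has dimension exceeding the common rank $\dim T^-$. Hence $\dim T = \dim T^-$, the $\theta$-fixed part $T^+$ is trivial, and $T = T^-$ is itself maximal $\theta$-split. The preceding proposition, applied to the pair of maximal $\theta$-split tori $T$ and $g^{-1}Tg$, then yields $g \in (\tilde{T}H)_F$.

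It remains to show that $g$ normalizes $M$. Working over $\bar F$, decompose $g = \tilde t h$ with $\tilde t \in \tilde T_{\bar F}$ and $h \in H_{\bar F}$. Since $\tilde T \subset Z_G(T) \subset Z_G(A_M^-) = M$, the factor $\tilde t$ lies in $M$ and normalizes $M$ trivially, so it suffices to show $h \in N_G(M)$. From $\tilde t \in Z_G(T)$ we have $g^{-1}Tg = h^{-1}Th \subset M$. Applying Vust's theorem to the reductive group $M$ with the involution $\theta|_M$ -- noting that the maximal $\theta$-split tori of $G$ contained in $M$ are exactly the maximal $\theta|_M$-split tori of $M$ -- we obtain $m \in (H \cap M)_{\bar F}$ with $m^{-1}Tm = h^{-1}Th$, so that $n := hm^{-1} \in N_H(T)_{\bar F}$ and $h = nm$. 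The final structural input is that any element of $N_H(T)$ normalizes $A_M^-$: writing $A_M^-$ as the identity component of $\bigcap_{\alpha \in \Phi(M,T)} \ker \alpha$ inside $T$, an automorphism of $T$ induced by an element of $H$ commutes with $\theta$ and preserves the Levi subsystem $\Phi(M, T)$ of the restricted root system, hence fixes $A_M^-$. Thus $n \in N_H(M)$, and combined with $m \in M$ and $\tilde t \in M$ one concludes $g \in N_G(M)$; together with $g \in (\tilde T H)_F$ this gives $g \in N_{\tilde T H}(M)_F$.

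I expect the main obstacle to be the final structural claim that $N_H(T) \subset N_H(A_M^-)$. This rests on knowing enough about the little Weyl group of the symmetric space to verify that Levi subsystems of the restricted root system are preserved; over $F$ one can fall back on Galois equivariance, since $A_M^-$ is the $F$-split subtorus of $T$, but the cleanest formulation is via the restricted root system. The remaining steps are routine bookkeeping with the definitions and the preceding proposition.
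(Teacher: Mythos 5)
Your first two steps are sound: showing that $T$ is itself a maximal $\theta$-split torus (a point the paper glosses over) and invoking the earlier proposition to get $g \in (\tilde{T}H)_F$ is exactly the second half of the paper's argument. The gap is the final structural claim, which you flagged yourself: it is not true that $N_H(T)$ normalizes $A_M^-$, nor that an automorphism of $T$ induced by an element of $H_{\bar{F}}$ preserves the subset $\Phi(T,M)$ of the restricted root system. Over $\bar{F}$, $N_H(T)/Z_H(T)$ is the full Weyl group of $\Phi(T,G)$, and a full Weyl group does not preserve a proper Levi subsystem; equivalently, $A_M^-$ is the maximal $F$-split subtorus of $T$, a rationality datum invisible over $\bar{F}$. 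Concretely, in the group case $G = G_1 \times G_1$, $\theta(x,y)=(y,x)$, with $G_1 = SL_3$ and $T$ corresponding to a maximal $F$-torus of $SL_3$ whose split part is one-dimensional, the reflection in a root not vanishing on the split part is realized in $N_{H_{\bar{F}}}(T)$ but moves $A_M^-$. Your proposed fallback via Galois equivariance is unavailable here because the elements $\tilde{t}$, $h$, $m$, and hence $n=hm^{-1}$ are produced only over $\bar{F}$ (both the decomposition $g=\tilde{t}h$ and Vust's theorem are $\bar{F}$-statements), so nothing forces $n$ to be rational. Worse, the reduction is essentially circular: since $\tilde{t}$ and $m$ lie in $M_{\bar{F}}$, your $n$ normalizes $M$ if and only if $g$ does, so the remaining claim is precisely the statement to be proved, and the only leverage available -- the rationality of $g$ -- has been discarded by passing to the $\bar{F}$-decomposition.

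The repair is to use that rationality directly, which is what the paper does. Since $A_M^-$ is central in $M$ and $\theta$-split, it commutes with the maximal $\theta$-split torus $g^{-1}Tg \subset M$, so by maximality $A_M^- \subset g^{-1}Tg$. Because conjugation by $g \in G_F$ is an $F$-isomorphism $T \rightarrow g^{-1}Tg$, it carries the maximal $F$-split subtorus of $T$, namely $A_M^-$, onto that of $g^{-1}Tg$; the latter contains $A_M^-$ and has the same dimension, hence equals $A_M^-$. Thus $g \in N_G(A_M^-) \subset N_G(M)$ since $M = Z_G(A_M^-)$, and combined with your correct first two steps this gives $g \in N_{\tilde{T}H}(M)$.
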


\begin{proof}
Because $g^{-1}Tg \subset M$ and $g^{-1}Tg$ is a maximal $\theta$-split torus, $g^{-1}Tg$ must contain $A_M^-$.  Since $A_M^-$ is the $F$-split part of $T$, by comparing ranks, we see that $A_M^-$ is also the $F$-split part of $g^{-1}Tg$.  In other words, $g$ belongs to the normalizer of $A_M^-$, which is a subset of the normalizer of $M$.  Further, if $g^{-1}tg = s$ for $t \in T$ and $s \in g^{-1}Tg$, then $\theta(g)^{-1}t\theta(g) = s$ also, and so $\theta(g)g^{-1}$ belongs to the centralizer of $T$.   Therefore, $g$ belongs to $(\tilde{T}H)_F$, as claimed.
\end{proof}

\begin{lemma}
Let $T_F \in \mathcal{T}_M$.  Then the number of $M_F$-conjugacy classes of maximal $\theta$-split tori in $M_F$ that are $G_F$-conjugate to $T_F$ is
\[  |W_{\tilde{T}H}(A_M)| \cdot \frac{|W_M(T)|}{|W_G(T)|}. \]
\end{lemma}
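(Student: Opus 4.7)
My first step is to identify the desired $M_F$-conjugacy classes as a double coset space. Applying the preceding lemma together with the observation that any $g = \tilde{t}h \in (\tilde{T}H)_F$ normalizing $M$ satisfies $g\theta(g)^{-1} = \tilde{t}\theta(\tilde{t})^{-1} \in \tilde{T} \subset Z_G(T)$, which in turn forces $g^{-1}Tg$ to be a maximal $\theta$-split torus, identifies the $G_F$-conjugates of $T$ lying in $M$ with $\{g^{-1}Tg : g \in N_{\tilde{T}H}(M)_F\}$. Arguing as in the earlier parametrization of $H_F$-conjugacy classes, two such tori are $M_F$-conjugate iff their parameters $g_1, g_2$ satisfy $g_2 \in N_G(T)_F \cdot g_1 \cdot M_F$, so the count in question equals
\[
\bigl| N_G(T)_F \,\backslash\, N_{\tilde{T}H}(M)_F \,/\, M_F \bigr|.
\]

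Next I would reduce this double coset count to a question about the action of Weyl groups on $A_M$. Because $M = Z_G(A_M)$, one has $N_G(M) = N_G(A_M)$, and the standard map $\pi : N_G(A_M)_F \twoheadrightarrow W_G(A_M)$ has kernel $M_F$. Restricting $\pi$ to $N_{\tilde{T}H}(M)_F = N_{\tilde{T}H}(A_M)_F$ gives an image equal to $W_{\tilde{T}H}(A_M)$ (interpreted as the image of $N_{\tilde{T}H}(A_M)_F$ inside the honest Weyl group $W_G(A_M)$) and a kernel $(\tilde{T}H)_F \cap M_F \subset M_F$. Hence $\pi$ induces a bijection $N_{\tilde{T}H}(M)_F / M_F \longleftrightarrow W_{\tilde{T}H}(A_M)$, and the residual problem is to count the orbits of $N_G(T)_F$ on $W_{\tilde{T}H}(A_M)$ under left translation.

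Third, I would compute this orbit count. Left translation on a group is free, so each orbit has cardinality $|\pi(N_G(T)_F)|$. The key inclusion $A_M^- \subset T$ forces $Z_G(T) \subset Z_G(A_M^-) = M$, giving $Z_M(T) = Z_G(T)$; together with $N_G(T)_F \subset N_G(A_M)_F$ (since any element normalizing $T$ must preserve its maximal $(\theta,F)$-split subtorus $A_M^-$), this yields
\[
|\pi(N_G(T)_F)| = \frac{|N_G(T)_F|}{|N_M(T)_F|} = \frac{|W_G(T)|}{|W_M(T)|},
\]
so the number of orbits equals $|W_{\tilde{T}H}(A_M)| \cdot |W_M(T)|/|W_G(T)|$, as claimed.

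The main obstacle throughout is the careful treatment of $W_{\tilde{T}H}(A_M)$ and of the kernel/image identifications at the level of $F$-points, because $\tilde{T}H$ is in general only a union of cosets of $\tilde{T}$ rather than a subgroup of $G$. One must consistently interpret $W_{\tilde{T}H}(A_M)$ as a subset of the honest Weyl group $W_G(A_M)$ and verify that the products and inverses arising in each step remain inside $(\tilde{T}H)_F$; once this bookkeeping is handled, the computation reduces to the elementary group-theoretic manipulations sketched above.
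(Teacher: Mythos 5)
Your proof is correct and takes essentially the same route as the paper: both identify the conjugacy classes in question with the double coset space $N_G(T)_F \backslash N_{\tilde{T}H}(M)_F / M_F$ (the paper writes the mirror-image cosets $(M \cap \tilde{T}H)_F \backslash N_{\tilde{T}H}(M)_F / N_{\tilde{T}H}(T)_F$) and evaluate it as $|W_{\tilde{T}H}(A_M)|$ divided by $[N_G(T)_F : N_M(T)_F] = |W_G(T)|/|W_M(T)|$, the latter using $Z_G(T) \subset Z_G(A_M^-) = M$. Your reformulation as counting free orbits of $\pi(N_G(T)_F)$ inside $W_G(A_M)$ is only a repackaging of the paper's index computation, and if anything it makes explicit some bookkeeping (the interpretation of $W_{\tilde{T}H}(A_M)$ and the stability of $N_{\tilde{T}H}(M)_F$ under left multiplication by $N_G(T)_F$) that the paper leaves implicit.
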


\begin{proof}
By the preceding lemma, $g^{-1}T_Fg \subset M_F$ implies that $g \in N_{\tilde{T}H}(M)_F$.  So the number of $M_F$-conjugacy classes of tori in $M_F$ that contain a $\theta$-split torus and are $G_F$-conjugate to a $\theta$-split torus $T$ is equal to
\begin{align*}
| (M \cap \tilde{T}H)_F \backslash N_{\tilde{T}H}(M)_F / N_{\tilde{T}H}(T)_F | &= [ (M \cap \tilde{T}H)_F \backslash N_{\tilde{T}H}(M)_F : N_{M\cap \tilde{T}H}(T)_F \backslash N_{\tilde{T}H}(T)_F ].
\end{align*}
This last expression is the index of two finite groups, and we can compute the order of each individually:
\begin{align*}
	| N_{M \cap \tilde{T}H}(T)_F \backslash N_{\tilde{T}H}(T)_F | = | N_M(T)_F \backslash N_G(T)_F | &= \frac{|W_G(T)|}{|W_M(T)|} \\
	| (M \cap \tilde{T}H)_F \backslash N_{\tilde{T}H}(M)_F | &=  |W_{\tilde{T}H}(A_M)|.
\end{align*}
Therefore
\begin{align*}
[ (M \cap \tilde{T}H)_F \backslash N_{\tilde{T}H}(M)_F : N_{M \cap \tilde{T}H}(T)_F \backslash N_{\tilde{T}H}(T)_F ] &=  \frac{|W_M(T)|}{|W_{\tilde{T}H}(A_M)| \cdot |W_G(T)|}
\end{align*}
as required.
\end{proof}

We would like to express the $\theta$-split side, in part, as a sum over the set $\mathcal{L}^-$, which may contain distinct Levi subgroups that are $G_F$-conjugate, or even $H_F$-conjugate.   With these lemmas, however, one may quickly write down the normalizing constants that such an expression requires.

\begin{lemma}
The sum on the $\theta$-split side of the trace formula can be rewritten in the following way:
\begin{align*}
        \sum_{T \in \mathcal{T}_{G/H}} \frac{1}{|W_H(T)|} \left( \cdot \right) &= \sum_{M \in \mathcal{L}^-} \sum_{T \in \mathcal{T}_M} \frac{1}{|W_{\tilde{T}H}(A_M)| \cdot |W_M(T)|} \sum_{S \in \mathcal{T}_G(T)} \frac{|W_G(S)|}{|W_H(S)|} \left( \cdot \right)
\end{align*}
where we have written $\left( \cdot \right)$ to denote the rest of each term (this can equal any function defined on the set of $H_F$-conjugacy classes of $\theta$-split tori in $G$).
\end{lemma}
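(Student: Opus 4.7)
The strategy is to verify the identity coefficient-by-coefficient, viewing both sides as linear functionals in the expression $(\cdot)$ regarded as a function $F$ on $\mathcal{T}_{G/H}$. Fix $[S_0] \in \mathcal{T}_{G/H}$ and compare the coefficients of the indicator of $[S_0]$ on both sides. On the LHS this is visibly $1/|W_H(S_0)|$.

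On the RHS, a triple $(M, T, S)$ contributes to $[S_0]$ only when $S \sim_H S_0$. Since $S \sim_G T$ by construction of $\mathcal{T}_G(T)$, this forces $T \sim_G S_0$, and by the role of $\mathcal{T}_G(T)$ as a set of $H_F$-class representatives inside $[T]_G$, exactly one $S \in \mathcal{T}_G(T)$ represents $[S_0]$; for this $S$ we have $|W_G(S)| = |W_G(S_0)|$ and $|W_H(S)| = |W_H(S_0)|$. For fixed $M \in \mathcal{L}^-$, write $\mathcal{T}_M^{S_0} := \{T \in \mathcal{T}_M : T \sim_G S_0\}$. By the preceding lemma, when this set is nonempty it has cardinality $|W_{\tilde{T}H}(A_M)| \cdot |W_M(T)|/|W_G(T)|$. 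Moreover $|W_M(T)|$ is constant on $\mathcal{T}_M^{S_0}$, since any two of its elements are conjugate by some $g \in N_{\tilde{T}H}(M) \subset N_G(M)$, and conjugation by such a $g$ sends $W_M(T_1)$ isomorphically onto $W_M(T_2)$. A direct cancellation then yields
\[ \sum_{T \in \mathcal{T}_M^{S_0}} \frac{1}{|W_{\tilde{T}H}(A_M)| \cdot |W_M(T)|} = \frac{1}{|W_G(S_0)|}. \]
Summing over $M$ and multiplying by $|W_G(S_0)|/|W_H(S_0)|$ gives the RHS coefficient as $N/|W_H(S_0)|$, where $N := |\{M \in \mathcal{L}^- : \mathcal{T}_M^{S_0} \neq \emptyset\}|$.

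The final, decisive step is to establish $N = 1$. A Levi $M \in \mathcal{L}^-$ lies in this set iff $A_M^-$ is $G_F$-conjugate to the maximal $(\theta, F)$-split subtorus $A_{S_0}^-$ of $S_0$. The plan is to use the parametrization $M \leftrightarrow A_M^-$ of $\mathcal{L}^-$ (via $M = Z_G(A_M^-)$) together with the conjugation theorem for $(\theta, F)$-split tori from Section 2 to show that within the family $\{A_M^- : M \in \mathcal{L}^-\} \subset A^-$, exactly one subtorus lies in the $G_F$-orbit of $A_{S_0}^-$. This is the main obstacle: distinct subtori of $A^-$ can a priori yield different but $G_F$-conjugate elements of $\mathcal{L}^-$, so one must carefully analyze the $W_G(A^-)_F$-orbit structure on subtori of $A^-$ and verify that the defining conditions of $\mathcal{L}^-$, together with the normalizing factor $|W_{\tilde{T}H}(A_M)|$, conspire to pick out a unique representative.
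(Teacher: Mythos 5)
Up to the point where you write the right-hand coefficient as $N/|W_H(S_0)|$, your computation is correct and is essentially the paper's argument made explicit: the paper likewise uses the preceding counting lemma to say that, for each $M$, the terms $T \in \mathcal{T}_M$ lying in a fixed $G_F$-class contribute total weight $1/|W_G(T)|$, and then splits the $G_F$-class into $H_F$-classes via $\mathcal{T}_G(T)$, the factor $|W_G(S)|/|W_H(S)|$ converting $1/|W_G(S_0)|$ into $1/|W_H(S_0)|$. Your observations that exactly one $S \in \mathcal{T}_G(T)$ represents $[S_0]$ and that $|W_M(T)|$ is constant on $\mathcal{T}_M^{S_0}$ (via the first lemma, which puts the conjugating element in $N_{\tilde{T}H}(M)$) are fine.

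The gap is the step you yourself call decisive: $N=1$ is not proved, only announced as a plan, and a proof cannot defer its one essential step. Worse, as formulated the plan appears to be unrealizable under the literal definition of $\mathcal{L}^-$ (all Levis containing $A$ equal to $Z_G(A_M^-)$). In the group case $\mathbb{G}=\mathbb{H}'\times\mathbb{H}'$ with $\theta$ the swap and $\mathbb{H}'=GL_3$, maximal $\theta$-split tori are the sets $S_{T'}=\{(t,t^{-1}):t\in T'\}$, and both $H_F$- and $G_F$-conjugacy of such tori reduce to $GL_3(F)$-conjugacy of $T'$. For $T'\cong E^\times\times F^\times$ with $E/F$ quadratic, the class of $S_{T'}$ is elliptic in $M'\times M'$ for each of the three block Levis $M'$ of type $(2,1)$ containing the diagonal torus, and all three products lie in $\mathcal{L}^-$; for these one checks $|W_{\tilde{T}H}(A_M)|=1$, so your $N$ equals $3$, not $1$. (The paper itself warns that $\mathcal{L}^-$ may contain distinct $G_F$-conjugate, even $H_F$-conjugate, members.) So an analysis of the orbit structure on subtori of $A^-$ cannot single out a unique $A_M^-$ in the $G_F$-orbit of $A_{S_0}^-$; what is actually required is either to read the sum over $\mathcal{L}^-$ as a sum over suitable conjugacy-class representatives, or to absorb the multiplicity of the class of $M$ inside $\mathcal{L}^-$ into the normalizing constant (the analogue of Arthur's factor $|W^M_0|/|W^G_0|$). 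The paper's own proof passes over exactly this point in one sentence, by asserting that the weighted $(M,T)$-double sum is a sum over representatives of the $G_F$-conjugacy classes; you have therefore located the genuine crux, but your proposal neither establishes $N=1$ nor, as stated, could it, so the argument is incomplete at precisely the step that matters.
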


\begin{proof}
By the preceding lemmas,
\begin{align*}
\sum_{M \in \mathcal{L}^-} \sum_{T \in \mathcal{T}_M} \frac{|W_G(T)|}{|W_{\tilde{T}H}(A_M)| \cdot |W_M(T)|} \left( \cdot \right)
\end{align*}
is a sum over representatives of  the $G_F$-conjugacy classes that contain maximal $\theta$-split tori in $G_F$.    Summing over representatives of the $H_F$-conjugacy classes that are $G_F$-conjugate to a given $\theta$-split torus provides the lemma.
\end{proof}

This last decomposition is better suited to our purposes, in part, because we will be able to absorb the sum over $\mathcal{T}_G(T)$ into the weight factors in a natural way.

\subsubsection{Weight Factors}

Remember that we have defined weight factors as integrals over symmetric spaces associated to certain tori:
\begin{align*}
	\omega_S(g) &:= \int_{Z_H(S) \backslash Z_G(S)} \phi(tg) \; dt.
\end{align*}
Suppose that $S \in \mathcal{T}_G(T)$ for some $T \in \mathcal{T}_G(M)$.  Then there exists a $g_S$ in $G_F$ that conjugates $S$ to $T$, and so we may express these weight factors in terms of the torus $T$:
\begin{align*}
\omega_S(g) &= \int_{Z_H(T) \backslash Z_G(T)} \phi(g_Stg_S^{-1}g) \, dt.
\end{align*}
Let $A_M$ be the $F$-split component of the center of $M$.  By proposition \ref{centralizer_lemma}, $Z_G(T)_F$ contains a cocompact subgroup $(A_M^- \cdot Z_H(T))_F$.   We can write the weight factor as
\begin{align*}
 &\int_{A_M^- \cdot Z_H(T) \backslash Z_G(T)} \int_{Z_H(T) \backslash A_M^- \cdot Z_H(T)} \phi (g_Sstg_S^{-1}g) \, ds\, dt.
\end{align*}
Because $A_M^- \cdot Z_H(T) \backslash Z_G(T)$ is compact, the outer integral is not an essential part of the truncation procedure, and we may absorb the integral over $(A_M^- \cdot Z_H(T))_F \backslash Z_G(T)_F$ into the orbital integrals.   Changing variables $g \mapsto g_Sg$, we then let
\begin{align*}
	\omega_M(g) &:=  \sum_{S \in \mathcal{T}_G(T)} \frac{|W_G(S)|}{|W_H(S)|} \int_{Z_H(T)_F\backslash (A_M^- \cdot Z_H(T))_F} \phi(g_Ssg)\; ds.
\end{align*}
One motivation for this refinement is that $\tau$ maps the symmetric space $Z_H(S)_F\backslash (A_M^- \cdot Z_H(S))_F$ into the $F$-split torus $A_M^-$. 

\subsection{The Trace Formula}

We now assemble these ingredients into a skeleton for the $\theta$-split side of the trace formula. 

\begin{definition}
For any family of functions $\omega_M \in C_c^{\infty}(G)$ parametrized by Levi subgroups $M$ of $G$ with $(\theta, F)$-split center, set $J_-(f, \omega)$ equal to to sum
\begin{align*}
\sum_{M \in \mathcal{L}^-} \sum_{T \in \mathcal{T}_M} \frac{1}{|W_{\tilde{T}H}(A_M)| \cdot |W_M(T)|} \int_{\mathfrak{t}'} \left| D^G(X) \right|^{\frac{1}{2}} \int_{A_M^- \cdot Z_H(T) \backslash G} f(g^{-1}Xg) \, \omega_M (g) \, dg\, dX.
\end{align*}
\end{definition}

This compact notation affords a succinct statement of the $\theta$-split side.

\begin{thm}
Suppose one has a split, reductive algebraic group $\mathbb{G}$ over $\mathcal{O}$, equipped with an involution $\theta : \mathbb{G} \rightarrow \mathbb{G}$ over $\mathcal{O}$ and suppose that $\mathbb{G}(F)$ contains a $(\theta, F)$-split maximal torus.   Let $\mathbb{H} = \mathbb{G}^{\theta}$.   Then for any $f \in C_c^{\infty}(\mathfrak{g})$,
\begin{align*}
	\int_{H \backslash G} \int_{\mathfrak{h}} \hat{f}(g^{-1}Xg)\, \phi(g)\; dX\, d\dot{g} &= J_- (f, \omega)
\end{align*}
where 
\begin{align*}
	\omega_M(g) &= \sum_{S \in \mathcal{T}_G(T)} \frac{|W_G(S)|}{|W_H(S)|}  \int_{Z_H(T)_F\backslash (A_M \cdot Z_H(T))_F} \phi(g_Stg) \; dt
\end{align*}
for some $\phi \in C_c^{\infty}(H \backslash G)$.
\end{thm}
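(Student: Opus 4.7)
The plan is to assemble Theorem~\ref{formal_identity} with the three successive refinements developed in Section~2.4, using nothing beyond careful measure-theoretic bookkeeping. Theorem~\ref{formal_identity} already produces the identity in the form
\[ \sum_{S \in \mathcal{T}_{G/H}} \frac{1}{|W_H(S)|} \int_{\mathfrak{s}'} |D^G(X)|^{1/2} \int_{Z_G(S)\backslash G} f(g^{-1}Xg)\,\omega_S(g)\,\dot{dg}\,dX, \]
with $\omega_S(g) = \int_{Z_H(S)\backslash Z_G(S)} \phi(tg)\,\dot{dt}$, so the remaining task is to regroup the outer sum and repackage the integrals into the shape defining $J_-(f,\omega)$.

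First I would apply the last lemma of Section~2.4.2, which rewrites the outer sum as an iterated sum over $M \in \mathcal{L}^-$, $T \in \mathcal{T}_M$, and $S \in \mathcal{T}_G(T)$, absorbing the constant $1/|W_H(S)|$ into the new prefactor $|W_G(S)| / \bigl(|W_{\tilde{T}H}(A_M)|\cdot |W_M(T)| \cdot |W_H(S)|\bigr)$. Next, for each $S \in \mathcal{T}_G(T)$ I would fix $g_S \in G_F$ with $S = g_S^{-1} T g_S$ and transport both the orbital integral and the weight factor $\omega_S$ to $T$ by the change of variables $g \mapsto g_S g$. Because the restricted root system $\Phi(\mathfrak{s},G)$ transports to $\Phi(\mathfrak{t},G)$ under conjugation by $g_S$, the Jacobian $|D^G(X)|^{1/2}$ depends only on the $G_F$-conjugacy class, and after this change of variables it can be written in terms of the single integral $\int_{\mathfrak{t}'}$ for the fixed representative $T \in \mathcal{T}_M$.

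Finally I would apply integration by stages inside the orbital integral. Lemma~\ref{centralizer_lemma} and the ensuing discussion imply that $(A_M^- \cdot Z_H(T))_F$ is cocompact in $Z_G(T)_F$, so
\[ \int_{Z_G(T)\backslash G} f(g^{-1}Xg)\,(\cdots)\,\dot{dg} = \int_{(A_M^-\cdot Z_H(T))\backslash G} \int_{(A_M^-\cdot Z_H(T))\backslash Z_G(T)} f(g^{-1}Xg)\,(\cdots)\,\dot{ds}\,\dot{dg}. \]
Because $f(g^{-1}Xg)$ depends only on the coset $Z_G(T)g$, the inner compact integral passes through the orbital integrand and is absorbed into the weight factor; combining it with the translate of $\omega_S$ by $g_S$ yields precisely the formula for $\omega_M$ stated in the theorem, and summing over $S \in \mathcal{T}_G(T)$ completes the assembly into $J_-(f,\omega)$. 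The main, and essentially only, obstacle is verifying that the Haar and quotient measures on $Z_G(T)\backslash G$, $(A_M^-\cdot Z_H(T))\backslash G$, and the compact fiber $(A_M^-\cdot Z_H(T))\backslash Z_G(T)$ are compatible both with the left translations by the various $g_S$ and with the Weyl-group normalizations produced in the first step, so that no unintended Jacobians or scaling constants appear in the final formula.
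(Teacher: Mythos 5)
Your proposal is correct and follows essentially the same route as the paper, which gives no separate proof of this theorem: it is exactly the assembly of Theorem~\ref{formal_identity}, the regrouping lemma of section 2.4.2 converting the sum over $\mathcal{T}_{G/H}$ into the iterated sum over $\mathcal{L}^-$, $\mathcal{T}_M$, $\mathcal{T}_G(T)$, and the weight-factor manipulation of section 2.4.3 (transport by $g_S$, integration in stages using the cocompactness of $(A_M^-\cdot Z_H(T))_F$ in $Z_G(T)_F$, and absorption of the compact quotient into the orbital integral, which is legitimate because the orbital integrand is left $Z_G(T)$-invariant). Only note that your displayed integration-by-stages identity has the quotients on the wrong sides — it is $\int_{(A_M^-\cdot Z_H(T))\backslash G}$ that factors as $\int_{Z_G(T)\backslash G}\int_{(A_M^-\cdot Z_H(T))\backslash Z_G(T)}$ — though the surrounding prose makes clear you intend the correct manipulation.
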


We would like to refine this expression for the $\theta$-split side further, but to do so, we need to introduce specific weight factors and study them in some depth.   This is the goal of the next section of this paper.

\section{The Weight Factors} \label{3}

\subsection{More Structure Theory}

Given an $F$-split maximal torus $A$ such that $A \cap K$ equals $A(\mathcal{O})$, we may identify $X_*(A)$ with $A / A \cap K$, and for $a \in A$, we may write $\nu_a$ for the image of $a$ under the map $A \twoheadrightarrow A / (A \cap K)$.  When $A_M$ and $A$ are two $F$-split tori of $G$ such that $A_M \subset A$, we will treat $X_*(A_M)$ as a subset of $X_*(A)$ because there is a natural inclusion $X_*(A_M) \hookrightarrow X_*(A)$.   For example, we may say without confusion that elements of $a \in A_M$ map to elements $\nu_a \in X_*(A)$.  

Next, we consider parabolic subgroups that are adapted in some way to $\theta$.   We will assume that these groups contain a fixed $F$-split $\theta$-stable maximal torus $A$ with $A \cap K$ equal to $A(\mathcal{O})$ and $A^-$ a maximal $(\theta, F)$-split torus.  That such a torus exists follows from the existence of an $F$-split $\theta$-stable torus in any split reductive group, including $Z_G(A^-)$ (eg. \cite{Hel91}).  

\begin{definition}
Let  $M$ be a $\theta$-stable Levi subgroup of $G$ and write
\begin{align*}
	\mathcal{P}(M) &:= \{ \text{parabolics that are minimal among those containing } M \}.
\end{align*}
We say that $P \in \mathcal{P}(M)$ is $\theta$-split if $P$ and $\theta(P)$ are opposite parabolics with respect to $M$.  We write
\begin{align*}
	\mathcal{P}(M)^- &:= \{ \text{$\theta$-split parabolics that are minimal among those containing } M \}.
\end{align*}
\end{definition}

As with $\mathcal{P}(M)$, we can describe the elements of  $\mathcal{P}(M)^-$ in terms of Weyl facets.

\begin{prop} \rm \cite{HW93} \it
Let $A_M$ be the $F$-split component of the $\theta$-stable Levi subgroup $M = Z_G(A_M)$.  Then parabolics in $\mathcal{P}(M)^-$ correspond bijectively with the Weyl chambers of the restricted root system $\Phi(A^-_M, G)$.   This bijection is given by associating to each Weyl chamber in $X_*(A_M^-)$ of the restricted root system the unique Weyl facet of $\Phi(A_M, G)$ in $X_*(A_M)$ containing it, and thence a parabolic subgroup in $\mathcal{P}(M)$.
\end{prop}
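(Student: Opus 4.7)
The plan is to use the standard bijection between $\mathcal{P}(M)$ and the Weyl chambers of $\Phi(A_M, G)$ in $V := X_*(A_M) \otimes \mathbb{R}$, read off the $\theta$-split condition geometrically, and then intersect with $V^- := X_*(A_M^-) \otimes \mathbb{R}$. Throughout one uses that $\theta$ stabilizes both $A_M$ and $A_M^-$ (since $M \in \mathcal{L}^-$ and $A_M^-$ is the $\theta$-split component of the center of $M$), and hence acts compatibly on $X_*(A_M)$ and $X^*(A_M)$ through the natural pairing.

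First I would set up the dictionary. A parabolic $P \in \mathcal{P}(M)$ corresponds to the Weyl chamber $C$ on which its positive roots $\Phi^+(P)$ are positive. By $\theta$-equivariance of the pairing, $\langle \theta(\alpha), c \rangle = \langle \alpha, \theta(c) \rangle$, together with the identity $\Phi^+(\theta(P)) = \theta(\Phi^+(P))$, the chamber corresponding to $\theta(P)$ is $\theta(C)$. The condition that $\theta(P)$ be opposite to $P$ with respect to $M$, namely $\Phi^+(\theta(P)) = -\Phi^+(P)$, then translates directly into $\theta(C) = -C$.

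Next I would show $\theta(C) = -C$ if and only if $C \cap V^- \neq \emptyset$. If $v \in C \cap V^-$, then $-\theta(v) = v$, so the chamber $-\theta(C)$ contains $v$ and must coincide with $C$. Conversely, if $\theta(C) = -C$, then for any $c \in C$ convexity of $C$ gives $\tfrac{1}{2}(c - \theta(c)) \in C$, and this midpoint lies in $V^-$ by construction. This produces a bijection between $\theta$-split parabolics in $\mathcal{P}(M)$ and Weyl chambers of $\Phi(A_M, G)$ that meet $V^-$. To identify $C \cap V^-$ with a Weyl chamber $C^-$ of $\Phi(A_M^-, G)$, I would invoke $M = Z_G(A_M^-)$: every $\alpha \in \Phi(A_M, G)$ then restricts nontrivially to $V^-$, so the restricted hyperplane arrangement on $V^-$ is exactly the trace of the full arrangement. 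Hence a nonempty $C \cap V^-$ is open in $V^-$, avoids all restricted root hyperplanes, and has a constant sign pattern under $\Phi(A_M^-, G)$, giving a unique $C^-$; conversely, the sign pattern of $\Phi(A_M, G)$ on any $v \in C^-$ determines the unique chamber $C$ with $v \in C$.

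The delicate point is this last identification: one must ensure that the ``unique Weyl facet of $\Phi(A_M, G)$ containing $C^-$'' is genuinely a full-dimensional chamber rather than a face of lower dimension. This is precisely what the hypothesis $M = Z_G(A_M^-)$ buys, since without it some nonzero root $\alpha$ could vanish on $V^-$, forcing $C^-$ to sit inside $\ker \alpha$ and the containing facet to drop dimension. Everything else is a direct unwinding of the definitions of opposite parabolics and Weyl chambers.
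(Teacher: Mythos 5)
The paper does not prove this proposition at all --- it is quoted from Helminck--Wang \cite{HW93} (their Proposition on minimal $\theta$-split parabolics), so there is no internal argument to compare against; judged on its own, your proof is correct and self-contained. The chain of reductions is sound: the standard chamber dictionary for $\mathcal{P}(M)$ in $V = X_*(A_M)\otimes\mathbb{R}$, the computation that $\theta(P)$ corresponds to $\theta(C)$, the translation of ``$P$ and $\theta(P)$ opposite with respect to $M$'' into $\theta(C) = -C$, the equivalence $\theta(C) = -C \iff C \cap V^- \neq \emptyset$ (the averaging trick $\tfrac12(c - \theta(c))$ handles one direction, uniqueness of the chamber through a point the other), and finally the matching of $C \cap V^-$ with a full restricted chamber. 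One point deserves emphasis: you correctly invoke $M = Z_G(A_M^-)$ to guarantee that every $\alpha \in \Phi(A_M, G)$ restricts nontrivially to $V^-$, whereas the proposition as printed only states $M = Z_G(A_M)$; the stronger condition is what the paper actually imposes on elements of $\mathcal{L}^-$ (Levis equal to the centralizer of $A_M^-$), and without it the ``unique facet containing the restricted chamber'' could indeed fail to be full-dimensional, so your flagging of this as the delicate step is exactly right. If you wanted to polish the write-up, you could state explicitly that $C \cap V^-$ equals (not merely lies in) the restricted chamber $C^-$, which follows from the constant-sign argument you already give and makes the two maps visibly mutually inverse; but this is a presentational remark, not a gap.
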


Let $B$ be a Borel subgroup containing $A$ and let $\mu \in X_*(A)$ be regular in the sense that it does not belong to any $X_*(A_M)$ except $X_*(A)$.  The Weyl group acts on $X_*(A)$ and the orbit of $\mu$ under this action contains a unique element dominant with respect to $B$.  Call this element $\mu_B$.  For any parabolic $P$ containing $B$, let $\mu_P$ denote the projection of $\mu_B$ onto the Weyl facet associated to $P$ in $\mathfrak{a} := X_*(A) \otimes_{\mathbb{Z}} \mathbb{R}$.  Note that each $\mu_P$ is in fact well-defined (see eg.  section 12 of \cite{Kot06}).  By varying $B$ over the Borel subgroups containing $A$, we obtain a family of cocharacters $\mu_B$ indexed by $\mathcal{P}(A)$.

Recall the homomorphism
\begin{align*}
		H_G &: G \rightarrow \Lambda_G := \{ \text{cocharacters} \} / \{ \text{coroots} \}
\end{align*} 
defined to be trivial on $K$ and equal to the projection
\begin{align*}
	A \twoheadrightarrow A / (A \cap K)
\end{align*}
on $A$.   By the Cartan decomposition, these two properties determine $H_G$.  For any parabolic $P$ with Levi complement $M$ and unipotent radical $N$, we define
\begin{align*}
	H_P(mnk) &:= H_M(m)
\end{align*}
where $m \in M$, $n \in N$, and $k \in K$.  By the Iwasawa decomposition, this uniquely determines a function
\begin{align*}
	H_P &: G \rightarrow \Lambda_M := \{ \text{cocharacters} \} / \{ \text{coroots of $M$} \}.
\end{align*}

Last, for any set $S$ of points in $\Lambda_M$, we define $\text{Hull\,} S$ to be the convex hull of the projections of these points to $\mathfrak{a}_M := \Lambda_M \otimes_{\mathbb{Z}} \mathbb{R}$ under the map
\begin{align*}
	\Lambda_M \rightarrow  \Lambda_M \otimes_{\mathbb{Z}} \mathbb{R}= \mathfrak{a}_M.
\end{align*}
We will also write $\text{Hull\,}S $ for the preimage of this convex hull under this map.  The meaning of $\text{Hull\,}S$ will therefore depend on whether it is a subset of $\Lambda_M$ or $\mathfrak{a}_M$.  

It will often be convenient to break up $\text{Hull\,}S$ according to the images of its elements in $\Lambda_G$.   If each element of $S$ maps to the same element in $\Lambda_G$ under the natural map $\Lambda_M \rightarrow \Lambda_G$, then write $\text{Hull\,} S^*$ for the set of elements in $\text{Hull\,} S$ that have the same image in $\Lambda_G$ as every element of $S$.    We will use a set of this form to define the truncating function $\phi$.

\subsection{A First Weight Factor}

Fix an $F$-split $\theta$-stable maximal torus $A$ with $A(\mathcal{O})$ equal to $A \cap K$ and $A^-$ a maximal $(\theta, F)$-split torus, as well as a Borel subgroup $B \supset A$, which provides a choice of simple roots $\Delta$, the notion of a dominant coweight in $X_*(A)$, and the notion of a positive coroot.  We will call the set of dominant coweights $X_*(A)_{dom}$.   

Thanks to the Cartan decomposition, there is a proper map
\begin{align*}
	\text{Cartan} &: G \twoheadrightarrow K \backslash G / K \cong X_*(A)_{dom}
\end{align*}
to the set of dominant coweights of $A$, as well as the ``invariant" map
\begin{align*}
	\text{inv} &: G \times G \rightarrow X_*(A)_{dom} \cong \Lambda_A \\
			&\hspace{2ex} (g, h) \mapsto \text{Cartan\,}(h^{-1}g).
\end{align*}

\begin{definition}
Set
\begin{align*}
\bar{\omega}(g, \mu) &:= \begin{cases}
						1, & \text{inv\,} (g, \theta(g)) = \text{Cartan\,}(\theta(g)^{-1}g) \in \text{Hull} \left\{ W_G(A) \cdot \mu \right\}^* \\
						0, & \text{otherwise. }
				\end{cases}
\end{align*}
\end{definition}
We note that when the algebraic group $\mathbb{G}$ is not split over $F$, the Cartan decomposition is slightly more complicated, but nevertheless may be used to define an analogous function, as in \cite{Art91}, that yields a manageable truncation procedure.

Notice that each element in the Weyl orbit of a dominant coweight $\mu$ maps to the same element under the map $X_*(A) \rightarrow \Lambda_G$, so that we can indeed break up the convex hull of this orbit in $X_*(A)$ according to this common image.  To show that this function satisfies the conditions of theorem \ref{formal_identity}, we need to check that it is locally constant and compactly supported.

\begin{lemma}
The function $g \mapsto \bar{\omega}(g, \mu)$ belongs to $C_c^{\infty}(H \backslash G)$ for any $\mu \in X_*(A)_{dom}$.
\end{lemma}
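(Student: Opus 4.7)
Three properties need verification for $\bar\omega(\cdot,\mu)$: left $H$-invariance, local constancy, and compact support modulo $H$. The first two are routine; the third is where the content lies.

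For left $H$-invariance, a direct calculation suffices: for $h \in H_F$ we have $\theta(hg)^{-1}(hg) = \theta(g)^{-1}\theta(h)^{-1}hg = \theta(g)^{-1}g$ because $\theta(h)=h$, so $\tau(hg) = \tau(g)$ and hence $\bar\omega(hg,\mu) = \bar\omega(g,\mu)$. For local constancy, the map $\tau \colon g \mapsto \theta(g)^{-1}g$ is continuous (polynomial), and the Cartan map $G \to X_*(A)_{\text{dom}}$ is locally constant because it factors through the discrete double coset space $K\backslash G/K$ and $K$ is open in $G$. Hence $\bar\omega(\cdot,\mu)$, being the characteristic function of the preimage of a subset of $X_*(A)_{\text{dom}}$ under $\text{Cartan} \circ \tau$, is locally constant on $G$.

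For compact support, observe first that since $W_G(A)$ is finite, $\text{Hull}\{W_G(A)\cdot\mu\}$ is a bounded polytope in $\mathfrak a = X_*(A) \otimes_{\mathbb Z} \mathbb R$, and therefore the set $S := \text{Hull}\{W_G(A)\cdot\mu\}^* \cap X_*(A)_{\text{dom}}$ is finite, being the intersection of a bounded set with the discrete lattice $X_*(A)$. Accordingly, the support of $\bar\omega(\cdot,\mu)$ in $G_F$ equals $\tau^{-1}(C)$, where $C = \bigcup_{\nu \in S} K\nu(\varpi)K$ is a finite union of compact $K$-double cosets, hence compact. The lemma therefore reduces to showing that the induced map $\bar\tau \colon H_F \backslash G_F \to G_F$ is proper, so that $H_F\backslash \tau^{-1}(C) = \bar\tau^{-1}(C)$ is compact.

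Properness of $\bar\tau$ is the main obstacle. The strategy is to show that $\bar\tau$ is a closed embedding of $p$-adic analytic spaces. First, $\bar\tau$ is injective: $\theta(g_1)^{-1}g_1 = \theta(g_2)^{-1}g_2$ implies $\theta(g_2g_1^{-1}) = g_2g_1^{-1}$, so $g_2 \in H_Fg_1$. Second, the image is contained in the Zariski-closed subvariety $Q := \{x \in G : \theta(x)x = 1\}$, which is $p$-adically closed in $G_F$. Third, by a calculation analogous to the one used in computing the Jacobian of $\phi_{\mathfrak t}$ in the Weyl integration formula above (using that $-\theta$ acts as the identity on $\mathfrak h^\perp$), the differential of $\tau$ at $g$ restricts to an isomorphism from $\mathfrak h^\perp$ onto the tangent space of $Q$ at $\tau(g)$; the $p$-adic implicit function theorem then implies that $\bar\tau$ is a local homeomorphism onto its image in $Q$. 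Combining injectivity, local homeomorphism, and the closedness of the image in $Q$, one concludes that $\bar\tau$ is a closed embedding of $H_F\backslash G_F$ into $G_F$, hence proper. Applied to the compact set $C$, this yields compactness of the support of $\bar\omega(\cdot,\mu)$ in $H_F\backslash G_F$.
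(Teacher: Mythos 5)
Your reduction of the lemma to the properness of the induced map $\bar{\tau}\colon H_F\backslash G_F\to G_F$ is exactly the paper's reduction (invariance, local constancy, finiteness of $\mathrm{Hull}\{W_G(A)\cdot\mu\}^*\cap X_*(A)_{\mathrm{dom}}$, and compactness of the corresponding union of $K$-double cosets are all correct). The gap is in the properness step itself. You verify that the image of $\bar{\tau}$ lies in the closed set $Q_F=\{x\in G_F:\theta(x)x=1\}$, that $\bar{\tau}$ is injective, and that it is a local homeomorphism onto its image; but you never prove that the image is \emph{closed} --- the phrase ``the closedness of the image in $Q$'' is asserted, not established, and containment in a closed set does not yield it. Indeed your differential computation shows the image is \emph{open} in $Q_F$ near each of its points, and an injective, locally closed embedding need not be proper: if $\tau(G_F)$ had a limit point in $Q_F\setminus\tau(G_F)$, then the $\bar{\tau}$-preimage of a small compact neighborhood of that point would not be compact, and the support of $\bar{\omega}(\cdot,\mu)$ could fail to be compact in $H_F\backslash G_F$.

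Closedness of the image is genuinely the crux and is not formal: $Q$ is in general strictly larger than $\tau(G)$ (already for trivial $\theta$ one has $\tau(G)=\{1\}$ while $Q$ is the $2$-torsion of $G$), and images of orbit maps are in general only locally closed --- the fact that this particular twisted orbit is closed uses that $\theta$ is an involution, so that the stabilizer $H$ is reductive. The paper disposes of exactly this point by citing Richardson \cite{Ric82}: $\tau$ induces a closed immersion of $H\backslash G$ into $G$. To repair your argument, either invoke that theorem and then descend to $F$-points (note that $\tau(G_F)$ is a union of $G_F$-orbits inside the closed set $(\tau(G))(F)$, each orbit open by your submersion argument, hence each also closed, so $\tau(G_F)$ is closed in $G_F$), or supply an independent proof that $\tau(G_F)$ is closed; with that in hand the remainder of your proof goes through.
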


\begin{proof}
The morphism Cartan is proper and locally constant, so it suffices to show that precomposition by the map
\begin{align*}
	\tau : H \backslash &G \rightarrow G; \; \; Hg \mapsto \theta(g)^{-1} g 
\end{align*}
sends $C_c^{\infty}(G)$ to $C_c^{\infty}(H \backslash G)$.   But this map is a closed immersion (see \cite{Ric82}), and this is sufficient.
\end{proof}

Our next goal will be to understand the asymptotic behavior of $\omega_M$ as $\mu$ becomes very large in the direction of the Borel subgroup $B$.

\subsection{Orthogonal Sets and Arthur's Key Geometric Lemma}

In this section, we state Arthur's key geometric lemma.   Detailed accounts include section 5 of the original article \cite{Art91}, and the sections leading up to and including section 22 in the expository article \cite{Kot06}.   

We will need the notion of a $(G, A)$-orthogonal set.   Recall that a $(G, A)$-orthogonal set is a set of points $x_B$ in $X_*(A)$, indexed by the Borel subgroups $B$ containing $A$,  such that for each pair of adjacent Borel subgroups $B$ and $B'$,
\[   x_B - x_{B'} = r \check{\alpha} \]
where $r$ is an integer and $\check{\alpha}$ is the unique coroot that is positive for $B$ and negative for $B'$.   These sets are called positive if $r$ is positive for each adjacent pair.   This notion can also be extended to include a set of points $x_B \in \mathfrak{a}$ by requiring only that $r$ be a real number, and not necessarily an integer.   The properties of these sets are discussed in depth in \cite{Art91} or \cite{Kot06}.    We will also occasionally impose an additional regularity condition on these sets:

\begin{definition}
A $(G, A)$-orthogonal set $(x_B)$ is called \it special \rm if $x_B$ is $B$-dominant for every Borel subgroup $B \supset A$.  
 \end{definition}

There is a generalization of a $(G, A)$-orthogonal set that is important in the theory of the local trace formula.  For a given Levi subgroup $M$, define a $(G, M)$-orthogonal set to be a family of points $x_P$ in $\Lambda_M$, indexed by the parabolic subgroups $P$ that contain $M$, subject to the condition that for each pair of adjacent parabolic subgroups $P$ and $P'$,
\[   x_P - x_{P'} = r \beta_{P, P'} \]
where $r$ is an integer and $\beta_{P, P'}$ is the the smallest element in the projection of $R_U \cap R_{U'}$ to $\Lambda_M$.   Here $R_U$ and $R_{U'}$ denote the roots that occur in the Lie algebras of the unipotent radicals $U$ and $U'$ of the parabolic subgroups $P$ and $P'$.   These generalized orthogonal sets are called positive if $r$ is positive for each adjacent pair of parabolic subgroups containing $M$.   They are called special if the condition
\[ \left< \alpha, x_P \right> > 0 \text{ for all } \alpha \in R_U \]
holds for every parabolic subgroup $P$ that contains $M$.   As with $(G, A)$-orthogonal sets, this notion can be extended to include sets of points in $\mathfrak{a}_M$ by requiring only that $r$ be a real number.
 
Arthur's key geometric lemma concerns coweights that are sufficiently regular in the sense that $\left< \alpha, \mu \right>$ is very large for each positive root $\alpha$.   Exactly how large $\left< \alpha, \mu \right>$ needs to be will depend on an element $g$ of $G$, and the location of the vertex in the Bruhat-Tits building $\mathcal{B}(G)$ of $G$ that $g$ represents (recall that $G$ maps surjectively onto the set of vertices in $\mathcal{B}(G)$).   This dependence can be captured by a single function on $\mathcal{B}(G)$.

More precisely, on $\mathfrak{a}$ one can choose a Weyl group invariant Euclidean norm $\| \cdot \|_E$ which extends uniquely to a $G$-invariant metric on $\mathcal{B}(G)$.   We denote this metric by $d(x_1, x_2)$ where $x_1$ and $x_2$ are points in $\mathcal{B}(G)$.   Let $x_0$ be the basepoint of $\mathcal{B}(G)$ (whose stabilizer contains $K$), and set
\begin{align*}
	d(x) &:= d(x, x_0) \\
	d(g) &:= d(g\cdot x_0, x_0).
\end{align*}
It is the quantity $d(g)$ that we need to state Arthur's key geometric lemma.  

Remember that the invariant map can be defined on $\mathcal{B}(G)$.  Abusing notation slightly, one could write
\[  \text{inv\,} : \mathcal{B}(G) \times \mathcal{B}(G) \rightarrow X_*(A)_{dom}. \]
For motivation, notice also that $\bar{\omega}$ is essentially defined by the inequality
\[   \text{inv\,} (g, \theta(g)) \leq \mu \]
where $x \leq y$ means that $y-x$ is a sum of positive coroots.   Here is Arthur's key geometric lemma.
 
\begin{prop}
Let $x_1, x_2 \in \mathcal{B}(G)$, and suppose that $\mu \in X_*(A)_{dom}$.    There is a constant $c$ such that whenever
\begin{align*}
	\left< \alpha, \mu \right> \geq c \cdot \left[ 1 + d(x_1) + d(x_2) \right] \text{ for all } \alpha \in \Delta
\end{align*}
the family of points $ \left\{ \mu_B - H_B(x_2) + H_{\bar{B}} (x_1) \right) : B \text{ is a Borel subgroup containing } A \}$ is a $(G, A)$-orthogonal set, and for any $a \in A$, the inequality $\text{inv} (ax_2, x_1) \leq \mu$ is satisfied precisely when
\begin{align*}
	&\nu_a \in \text{Hull}\left\{ \mu_B - H_B(x_2) + H_{\bar{B}}(x_1) \right\}^*.
\end{align*}
\end{prop}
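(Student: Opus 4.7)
The plan is to establish the two claims of the proposition separately. First, I would verify directly that the family $\{x_B := \mu_B - H_B(x_2) + H_{\bar{B}}(x_1)\}$ is $(G,A)$-orthogonal, with no regularity hypothesis required. For adjacent Borels $B, B'$ separated by the wall of the coroot $\check{\alpha}$ (positive for $B$, negative for $B'$), the Weyl orbit projections satisfy $\mu_B - \mu_{B'} \in \mathbb{Z}\check{\alpha}$ by definition of $\mu_B$ as the image of the dominant representative under the reflection across the shared wall, and the Iwasawa cocycles obey $H_B(x) - H_{B'}(x) \in \mathbb{Z}\check{\alpha}$ -- a standard fact reviewed in section 12 of \cite{Kot06}. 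Since $\bar{B}$ and $\bar{B'}$ are likewise adjacent across the same wall, the three contributions combine to give $x_B - x_{B'} \in \mathbb{Z}\check{\alpha}$.

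For the second claim, I would translate the inequality $\text{inv}(ax_2, x_1) \leq \mu$ into a statement about convex hulls. Setting $\lambda := \text{Cartan}(x_1^{-1} a x_2) \in X_*(A)_{dom}$, the condition $\lambda \leq \mu$ in the coroot partial order is equivalent, by the standard combinatorial convexity theorem for Weyl orbits, to $\lambda \in \text{Hull}\{W_G(A) \cdot \mu\}$; the ``$*$''-decoration tracks the image in $\Lambda_G$, which matches automatically since both $\mu_B$ and $H_{\bar{B}}(x_1) - H_B(x_2)$ have well-defined projections there. The target equivalence therefore reduces to showing that, for each Borel $B$, the pairing $\langle \alpha, \nu_a \rangle$ is controlled by $\langle \alpha, x_B \rangle$ in a way that captures the $B$-position of $\text{Cartan}(x_1^{-1} a x_2)$ within its Weyl orbit.

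The main obstacle is this last identification, and it is precisely where the regularity hypothesis is used. The point is that the Iwasawa decomposition $g = mnk$ with respect to a given $B$ determines $H_B(g)$ exactly, but distinct Iwasawa decompositions for different Borels are incompatible off the basepoint by an error of size $O(d(g \cdot x_0))$. Thus for $g = x_1^{-1} a x_2$, the projection of $\text{Cartan}(g)$ onto each $B$-dominant chamber is approximately $\nu_a + H_{\bar{B}}(x_1) - H_B(x_2)$ with error bounded linearly in $d(x_1) + d(x_2)$ via a Lipschitz-type estimate for $H_B$ along the building metric. Choosing $c$ large enough to dominate these Lipschitz constants (uniformly in $B$) together with the combinatorial constants from the positive coroot cone forces the orthogonal set $\{x_B\}$ to be \emph{special} -- each $x_B$ being $B$-dominant -- and then all chambers can be controlled simultaneously, collapsing the family of inequalities ``$w\lambda \leq \mu$ for all $w \in W_G(A)$'' into the single hull-membership condition. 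Making $c$ explicit by tracking the worst Lipschitz constant of $H_B$ over all $B$, together with the minimal positive pairings $\langle \alpha, \check{\beta} \rangle$, is the technical heart of the argument.
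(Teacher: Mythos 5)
The paper does not actually prove this proposition: it is quoted as Arthur's key geometric lemma, with the proof deferred to section 5 of \cite{Art91} and to the sections up to and including section 22 of \cite{Kot06}, so your attempt can only be measured against that standard argument. Your first step is fine (and indeed the bare orthogonality of $B \mapsto \mu_B - H_B(x_2) + H_{\bar{B}}(x_1)$ needs no regularity, since $(\mu_B)$, $(-H_B(x_2))$ and $(H_{\bar{B}}(x_1))$ are each orthogonal families and the condition is additive); regularity is what makes the set positive, in fact special, which is what the hull description $\text{Hull}\{x_B\}^* = \{x : x \leq_B x_B \ \forall B\}$ requires. Likewise the reduction of $\lambda \leq \mu$ to $\lambda \in \text{Hull}\{W_G(A)\cdot\mu\}^*$ is standard and unobjectionable.

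The genuine gap is in the step you yourself flag as the technical heart. You propose to compare $\text{Cartan}(x_1^{-1}ax_2)$ with $\nu_a - H_B(x_2) + H_{\bar{B}}(x_1)$ only \emph{approximately}, with an error $O(d(x_1)+d(x_2))$ absorbed by taking $c$ large. That cannot yield the stated equivalence, because the conclusion is an exact ``precisely when'': the delicate cases are exactly those $a$ for which $\nu_a$ lies on or within a bounded distance of the boundary of the hull, and there an error comparable to $d(x_1)+d(x_2)$ is not small relative to the distance to the boundary, no matter how large $c$ is ($c$ enlarges $\mu$, not the margin available to $\nu_a$ near a face). The argument of Arthur and Kottwitz replaces your Lipschitz estimate by two exact ingredients that your sketch does not supply: (i) exact convexity inequalities for the Cartan invariant (valid for all $a$, giving one implication outright), and (ii) the exact identity $\text{inv}(ax_2,x_1) = \nu_a - H_B(x_2) + H_{\bar{B}}(x_1)$ whenever $\langle \alpha, \nu_a\rangle$ is sufficiently large, relative to $d(x_1)+d(x_2)$, for all roots $\alpha$ positive for $B$ (proved via retraction/Iwasawa arguments in the building), after which positivity of the orthogonal set and a combinatorial interpolation over chambers handle the intermediate $a$. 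You also treat the biconditional as a single estimate, whereas the two implications require genuinely different arguments; as written, neither direction is established at the boundary, so the proposal is an outline of the right shape rather than a proof.
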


This geometric lemma will facilitate the derivation of an asymptotic description of the preliminary weight factors, in the next section.

\subsection{Asymptotic Behavior of $\omega_M$}

When $\phi(g) = \bar{\omega}(g, \mu)$, we find in the trace formula the following weight factors:
\[ \omega_M(g, \mu) :=  \sum_{S \in \mathcal{T}_G(T)} \frac{|W_G(S)|}{|W_H(S)|} \int_{Z_H(T)_F\backslash (A_M \cdot Z_H(T))_F} \bar{\omega}(g_Ssg,\mu)\; ds. \]
The sum over $\mathcal{T}_G(T)$ appears to be difficult to describe in some special cases, but it is fortunately possible to remove it from this equation, and derive a simplified expression for these weight factors in the process.

\begin{prop}
The weight factors can be written in the following way:
\[ \omega_M(g, \mu) = \int_{H_F \backslash (H \cdot A_M^-)_F}\bar{\omega}(sg,\mu)\; ds. \]
\end{prop}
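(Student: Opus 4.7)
The plan is to recognize the weighted sum defining $\omega_M(g,\mu)$ as an unfolding of the single integral $\int_{H_F \backslash (H \cdot A_M^-)_F} \bar{\omega}(sg,\mu)\, ds$. Since $\bar{\omega}(\cdot\, g, \mu)$ is left $H_F$-invariant, both sides are integrals of the same function over (nominally different) measure spaces, and the task is to produce a measure-preserving parametrization of $H_F \backslash (H \cdot A_M^-)_F$ by the data $(S, s)$ with $S \in \mathcal{T}_G(T)$ and $s$ running over $Z_H(T)_F \backslash (A_M^- \cdot Z_H(T))_F$.

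The parametrization will come from a geometric invariant on $(H \cdot A_M^-)_F$. For $g \in (H \cdot A_M^-)_F$, writing $g = ha$ over $\bar{F}$ with $h \in H_{\bar{F}}$ and $a \in A_{M,\bar{F}}^-$, the conjugate $g^{-1} A_M^- g = h^{-1} A_M^- h$ is an $F$-rational $(\theta,F)$-split torus that is $G_F$-conjugate to $A_M^-$. Since $A_M^-$ is the $(\theta,F)$-split part of $T$, the earlier proposition identifying $\mathcal{T}_G(T)$ with the double-coset space $N_{\tilde{T}H}(T)_F \backslash (\tilde{T}H)_F / H_F$ shows that these $H_F$-conjugacy classes are in bijection with $\mathcal{T}_G(T)$. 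For each $S \in \mathcal{T}_G(T)$, using the double-coset representatives $g_S \in (\tilde{T}H)_F$ together with the identity $Z_G(T) = \tilde{T} \cdot Z_H(T)$ (a consequence of Lemma \ref{centralizer_lemma}, since $[Z_G(T), Z_G(T)] \subset H \cap Z_G(T) = Z_H(T)$), I will show that the fiber of this invariant over $S$ is naturally parametrized by $Z_H(T)_F \backslash (A_M^- \cdot Z_H(T))_F$ via the map $s \mapsto g_S s$. A direct computation of $\tau(g_S s g) = \theta(g)^{-1} z^{-1} \tau(g_S) z \cdot a^2 \cdot g$ for $s = az \in A_M^- \cdot Z_H(T)$ (using that $a \in A_M^- \subset T \subset Z(Z_G(T))$ commutes with $\tau(g_S) \in Z_G(T)$) confirms that $\bar{\omega}(g_S s g, \mu)$ agrees with $\bar{\omega}(s' g, \mu)$ for the corresponding $s' \in (H \cdot A_M^-)_F$, up to absorbing an $H_F$-translate.

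The Weyl-group factor $|W_G(S)|/|W_H(S)|$ then emerges from the counting lemmas of Section 2: the natural parametrization of $\mathcal{T}_G(T)$ via double cosets involves the full Weyl group $W_G(T) \cong W_G(S)$ (coming from the normalizer $N_{\tilde{T}H}(T)_F$), whereas the stabilizer of a point in the fiber under the $H_F$-action accounts only for $W_H(S)$; the ratio corrects this discrepancy in passing between the two parametrizations. The main obstacle is the combinatorial bookkeeping of these stabilizers, together with the $F$-rationality (Galois cohomology) issues in decomposing $(H \cdot A_M^-)_F$ as a disjoint union indexed by representatives; both are ultimately handled by careful application of the proposition from Section 2, which absorbs the cohomological obstructions into the double-coset description of $\mathcal{T}_G(T)$, after which a standard integration-by-stages argument finishes the proof.
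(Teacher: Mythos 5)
Your overall strategy---unfolding the weighted sum over $\mathcal{T}_G(T)$ against a single integral by matching parametrizations---is the same unfolding the paper performs, but the two decisive points are asserted rather than proved, and one of them is where all the content lies. The factor $|W_G(S)|/|W_H(S)|$ does not come from ``the counting lemmas of Section 2'': those lemmas parametrize $\mathcal{T}_G(T)$ by the coarse double cosets $N_{\tilde{T}H}(T)_F\backslash(\tilde{T}H)_F/H_F$ and produce the constants $|W_{\tilde{T}H}(A_M)|$ and $|W_M(T)|/|W_G(T)|$ in the outer sum; they say nothing about the multiplicity with which each $S$ must be counted in the present identity. The paper's proof consists precisely of the computation you defer: pass to the finer double cosets $Z_{\tilde{T}H}(T)_F\backslash(\tilde{T}H)_F/H_F$, let $W_G(T)$ act on the left, and show that the stabilizer of the coset of $g_S$ is the image of $W_H(S)\rightarrow W_G(S)$ (an element $n\in N_{\tilde{T}H}(T)_F$ fixes $Z_G(T)_F\,g_S\,H_F$ exactly when $g_S^{-1}ng_S$ represents a class in $W_G(S)$ admitting a representative in $H_F$), so that by orbit--stabilizer each coarse coset splits into exactly $|W_G(S)|\cdot|W_H(S)|^{-1}$ fine ones. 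Your sentence ``the ratio corrects this discrepancy'' is the statement that needs proof, and the Section 2 proposition does not contain it.

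There is also a problem with the invariant you use to index the fibers. The integration is over left cosets $H_F\backslash(H\cdot A_M^-)_F$, and for $g=ha$ one has $g^{-1}A_M^-g=a^{-1}\bigl(h^{-1}A_M^-h\bigr)a$, which is not $h^{-1}A_M^-h$ in general (your identity would hold for $g=ah$, but $H\cdot A_M^-\neq A_M^-\cdot H$); more seriously, replacing $g$ by $h_0g$ with $h_0\in H_F$ replaces this torus by one conjugate to it only by $h^{-1}h_0h\in H_{\bar{F}}$ (resp.\ $g^{-1}h_0g\in G_F$), so the $H_F$-conjugacy class---the datum that is supposed to pick out $S\in\mathcal{T}_G(T)$---is not visibly constant on the cosets you integrate over. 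The genuinely left-$H$-invariant datum is $\tau(g)=\theta(g)^{-1}g$, which on $(H\cdot A_M^-)_F$ lands in $A_M^-$, and the paper's argument runs through it: $\tau$ sends the fine double cosets to a system of representatives for the cosets of the subgroup $\tau\bigl(Z_{\tilde{T}H}(T)_F\bigr)$ of the abelian group $\tilde{T}_F$, which is exactly what lets the translated integrals over $Z_H(T)_F\backslash(A_M^-\cdot Z_H(T))_F$ glue into one integral over $H_F\backslash(H\cdot\tilde{T})_F$, restricted at the end to the part mapping into $A_M^-$. (Your computation of $\tau(g_Ssg)$ is fine under the convention $S=g_STg_S^{-1}$, so the integrand comparison is not the issue.) Once you replace the torus-valued invariant by this $\tau$-based decomposition and supply the orbit--stabilizer count above, your argument becomes the paper's proof; without them it is a plan with the two key steps missing.
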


\begin{proof}
We have already seen in section \hyperref[2]{2} that the elements $g_S$ precisely represent the double cosets
\[ N_{\tilde{T}H}(T)_F \backslash (\tilde{T}H)_F / H_F. \]
As a first approximation to these double cosets, we consider the double cosets
\[ Z_{\tilde{T}H}(T)_F \backslash (\tilde{T}H)_F / H_F. \]
 The Weyl group $W_G(T) = W_{\tilde{T}H}(T)$ acts on this set from the left, and the elements $g_S$ represent the $W_G(T)$-orbits under this action.   Notice that $|W_G(T)| = |W_G(S)|$, because $T$ and $S$ are $G_F$-conjugate.  On the other hand, suppose that $n \in N_{\tilde{T}H}(T)$ sends a coset to itself:
\[ n  Z_G(T)_F g_S H_F = Z_G(T)_F g_S H_F. \]
With a few algebraic manipulations, this becomes
\[ g_S^{-1} n g_S Z_G(S)_F H_F = Z_G(S)_F H_F \]
which implies that $g_S^{-1} n g_S$ represents an element of $W_G(S)$ that has a representative in $H_F$.   Since elements of $W_H(S)$ certainly fix these cosets, the cardinality of the fixator of this double coset equals the cardinality of the image of the injection $W_H(S) \rightarrow W_G(S)$.   The orbit-stabilizer theorem then implies that the size of the $W_{G}(T)$-orbit containing the double coset represented by $g_S$ is $|W_G(S)| \cdot |W_H(S)|^{-1}$.  We can therefore write 
\[ \sum_{S \in \mathcal{T}_G(T)} \frac{|W_G(S)|}{|W_H(S)|} \int_{Z_H(T)_F\backslash (T \cdot Z_H(T))_F} \bar{\omega}(g_Ssg,\mu)\; ds=  \sum_{g_i \in \mathcal{S}}  \int_{Z_H(T)_F\backslash (T \cdot Z_H(T))_F} \bar{\omega}(g_isg,\mu)\; ds. \]
where $\mathcal{S}$ now indexes a system of representatives of the more tractable double cosets
\[ Z_{\tilde{T}H}(T)_F \backslash (\tilde{T}H)_F / H_F. \]
The image of $Z_{\tilde{T}H}(T)_F$ under $\tau$ is a normal subgroup of the abelian group $\tilde{T}$.  Since $\tau$ maps $\mathcal{S}$ to a system of representatives of the cosets of this subgroup that belong to the image of $\tau$, these integrals may be combined:
\[ \int_{H_F \backslash (H \cdot \tilde{T} )_F}\bar{\omega}(sg,\mu)\; ds. \]
Restricting to those elements that map to elements of $A_M^-$ under $\tau$ yields the proposition.
\end{proof}

We could also express the weight factor $\omega_M$ as
\[ \omega_M(g, \mu) = \int_{A_M \cap \text{Im\,} \tau}\bar{\omega}(s^{\frac{1}{2}}g,\mu)\; ds \]
where $s^{\frac{1}{2}}$ is written formally; the integrand depends only on $s =: \tau(s^{\frac{1}{2}})$.  

\begin{definition}
For each $M \in \mathcal{L}^{-}$, define 
\begin{align*}
	\omega_M^{asymp}(g, \mu) &:= \# \{ \nu \in X_*(A_M) \cap \text{Im\,}\tau : \nu \in \text{ Hull\,}\{\mu_B -H_B(g) + H_{\bar{B}} (\theta(g))\}^*  \}
\end{align*}
where we have written $X_*(A_M) \cap \text{Im\,}\tau$ for the set of coweights in $A_M / A_M \cap K$ that have a representative in $A_M \cap \text{Im\,}\tau$.   
\end{definition}

The next proposition connects these new functions to the preliminary weight factors in the local trace formula.

\begin{prop}
There is a constant $c$ such that when $ \left< \alpha, \mu \right> \geq c \cdot \left[ 1 + d(r_{\gamma}g) + d(r_{\gamma} \theta(g)) \right]$ for all simple roots $\alpha$, 
\begin{align*}
	\omega_M(g, \mu) &= \omega_M^{asymp} (g, \mu).
\end{align*}
\end{prop}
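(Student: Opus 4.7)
The plan is to reduce the claim to Arthur's key geometric lemma via the simplified formula for $\omega_M$ derived in the preceding proposition, which gives
\[ \omega_M(g,\mu) \;=\; \int_{A_M \cap \text{Im\,}\tau} \bar{\omega}(s^{1/2}g, \mu)\, ds. \]
Since $A_M$ is $F$-split with $A_M \cap K = A_M(\mathcal{O})$, the map $\text{Cartan}$ is $K$-biinvariant and so $\bar{\omega}(\cdot\, g, \mu)$ is constant on cosets of $A_M(\mathcal{O})$. With the measure on $A_M$ normalized so that $A_M(\mathcal{O})$ has unit volume, the integral becomes the discrete sum
\[ \omega_M(g, \mu) \;=\; \sum_{\nu \in X_*(A_M) \cap \text{Im\,}\tau} \bar{\omega}(s_{\nu}^{1/2}g, \mu), \]
in which $s_{\nu} \in A_M \cap \text{Im\,}\tau$ is any representative mapping to $\nu$ and $s_{\nu}^{1/2}$ any element with $\tau(s_{\nu}^{1/2}) = s_{\nu}$.

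First I would rewrite the indicator $\bar{\omega}$ in a form amenable to Arthur's lemma. Using $\theta(s_{\nu}^{1/2})^{-1}s_{\nu}^{1/2} = \tau(s_{\nu}^{1/2}) = s_{\nu}$, a direct computation gives
\[ \theta(s_{\nu}^{1/2}g)^{-1}(s_{\nu}^{1/2}g) \;=\; \theta(g)^{-1}\,\theta(s_{\nu}^{1/2})^{-1}s_{\nu}^{1/2}\,g \;=\; \theta(g)^{-1}s_{\nu} g. \]
Hence $\bar{\omega}(s_{\nu}^{1/2}g, \mu) = 1$ exactly when $\text{Cartan}(\theta(g)^{-1}s_{\nu} g) \in \text{Hull}\{W_G(A) \cdot \mu\}^*$, which, by the classical convex-hull description of the dominance order on dominant coweights (together with the observation that positive coroots vanish in $\Lambda_G$, which accounts for the asterisk), is equivalent to the inequality $\text{inv}(s_{\nu} g, \theta(g)) \leq \mu$.

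Next I would invoke Arthur's key geometric lemma with $x_2 := g \cdot x_0$, $x_1 := \theta(g) \cdot x_0$, and $a := s_{\nu}$. Under the regularity hypothesis on $\mu$, the lemma yields
\[ \text{inv}(s_{\nu} g, \theta(g)) \leq \mu \;\Longleftrightarrow\; \nu \in \text{Hull}\{\mu_B - H_B(g) + H_{\bar{B}}(\theta(g))\}^*. \]
Substituting this equivalence into the sum above produces exactly the counting expression defining $\omega_M^{asymp}(g, \mu)$, completing the proof.

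The main obstacle is the bookkeeping between $X_*(A)$ and $X_*(A_M)$: Arthur's lemma produces a $(G,A)$-orthogonal hull in $\mathfrak{a} = X_*(A) \otimes_{\mathbb{Z}} \mathbb{R}$ with membership phrased for $\nu_a \in X_*(A)$, whereas $\omega_M^{asymp}$ counts elements of the smaller lattice $X_*(A_M)$. Because $X_*(A_M) \hookrightarrow X_*(A)$ and $A_M \cap \text{Im\,}\tau \subset A_M^-$ has image in $X_*(A_M)$, the restriction of the $(G,A)$-orthogonal hull to $\mathfrak{a}_M$ is the corresponding $(G,M)$-orthogonal hull, and the two counting conditions coincide; this is the standard reduction used throughout Arthur's framework. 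A secondary technical point is matching measures so that the integral genuinely reduces to a count, which follows from $\bar{\omega}$ being left-$H$-invariant (since $\theta$ fixes $H$) and factoring through $A_M/A_M(\mathcal{O})$ via Cartan.
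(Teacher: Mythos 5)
Your proposal is correct and follows essentially the same route as the paper: rewrite $\omega_M$ via the preceding proposition as an integral (equivalently a lattice count) over $A_M \cap \text{Im\,}\tau$ of the condition $\text{inv}(s_\nu g, \theta(g)) \leq \mu$, then apply Arthur's key geometric lemma with $x_2 = g\cdot x_0$, $x_1 = \theta(g)\cdot x_0$, $a = s_\nu$. The extra details you supply — the computation $\theta(s_\nu^{1/2}g)^{-1}s_\nu^{1/2}g = \theta(g)^{-1}s_\nu g$, the measure normalization turning the integral into a count, and the $X_*(A_M) \hookrightarrow X_*(A)$ bookkeeping — are exactly the steps the paper leaves implicit.
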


\begin{proof}
We have shown that $\omega_M$ may be expressed as the integral over $A_M \cap  \text{Im\,}\tau$ of the characteristic function $\bar{\omega}(s^{\frac{1}{2}}g, \mu)$.   This can be written in a form amenable to Arthur's key geometric lemma:
\begin{align*}
 		& \text{meas\,}_{A_M} \{ s \in A_M \cap  \text{Im\,}\tau : \text{inv\,}( sg, \theta(g)) \in \text{Hull\,} \{\mu_B\}^* \}.
\end{align*}
Arthur's lemma states that there exists a constant $c$ such that when $\left< \mu, \alpha \right> \geq c \cdot \left[ 1 + d(g) + d(\theta(g)\right]$ for all simple roots $\alpha$,
\begin{align*}
	\text{inv\,}(&sg, \theta(g)) \in \text{Hull\,} \{\mu_B\}^* \iff  \nu_s \in \text{Hull\,} \{ \mu_B - H_B(g) + H_{\bar{B}}(\theta(g)) \}^*.
\end{align*}
It then follows that
\begin{align*}
	\omega_M(g, \mu) &= \# \{ \nu \in X_*(A_M) \cap  \text{Im\,}\tau :  \nu \in \text{ Hull\,}\{\mu_B -H_B(g) + H_{\bar{B}} (\theta(g)) \}^* \}
\end{align*}
when $\mu$ is sufficiently regular.
\end{proof}

\subsection{Further Refinements of this Weight Factor}

In this section, we relate the asymptotic weight factor $\omega_M^{asymp}$ to the involution $\theta$.   Specifically, we show that it depends only on those ingredients of the structure theory of $G$ that are associated to the $(\theta, F)$-split torus $A_M^-$.   We begin by discussing $(G, A)$-orthogonal sets in more detail.

\begin{lemma}
\cite{Kot06} Let $\{x_B : B \in \mathcal{P}(A) \}$ be a positive $(G, A)$-orthogonal set.  Then, \rm
\begin{align*}
\text{Hull}\{x_B : B \in \mathcal{P}(A) \}^{*} = \{ x \in \mathfrak{a} : x \leq_B x_B, \forall B \in \mathcal{P}(A)\},
\end{align*} \it where $\mathfrak{a} = X_*(A) \otimes \mathbb{R}$.
\end{lemma}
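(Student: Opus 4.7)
The plan is to prove the two inclusions separately, using a minimal-gallery argument for one direction and a separating-hyperplane argument for the other.

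First, I would establish the auxiliary fact that for any two Borel subgroups $B, B_0 \supset A$, the positivity hypothesis forces $x_B \leq_{B_0} x_{B_0}$, i.e., $x_{B_0} - x_B$ is a non-negative combination of coroots positive for $B_0$. To see this, connect $B$ to $B_0$ by a minimal gallery $B = B_1, B_2, \ldots, B_k = B_0$ of adjacent Borel subgroups and telescope:
\[
x_{B_0} - x_B \;=\; \sum_{i=1}^{k-1}\bigl(x_{B_{i+1}} - x_{B_i}\bigr) \;=\; \sum_{i} r_i\, \check{\alpha}_i,
\]
where each $r_i \geq 0$ by the positivity hypothesis and each $\check{\alpha}_i$ is the simple coroot of $B_{i+1}$ for the wall between $B_i$ and $B_{i+1}$. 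Minimality of the gallery forces each wall $H_{\alpha_i}$ to separate $B$ from $B_0$, and because the gallery consistently moves toward $B_0$, each $\check{\alpha}_i$ is in fact positive for $B_0$ as well.

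For the inclusion $\subseteq$, I would take $x = \sum_B \lambda_B x_B$ with $\lambda_B \geq 0$ and $\sum_B \lambda_B = 1$ and observe that for any fixed $B_0$,
\[
x_{B_0} - x \;=\; \sum_B \lambda_B\, (x_{B_0} - x_B)
\]
is a non-negative combination of non-negative combinations of positive coroots of $B_0$, hence itself a non-negative combination of such coroots. Thus $x \leq_{B_0} x_{B_0}$ for every $B_0$, giving containment in the right-hand set.

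For the inclusion $\supseteq$, I would suppose $x \leq_B x_B$ for every Borel $B \supset A$. Since each $x_B - x$ lies in the real span of the coroot lattice, $x$ automatically shares the common image of the $x_B$'s in $\Lambda_G$, so the containment in $\text{Hull}\{x_B\}^*$ reduces to a plain convex-hull question. By Hahn--Banach duality it suffices to check that $\lambda(x) \leq \max_B \lambda(x_B)$ for every $\lambda \in \mathfrak{a}^*$. Given such a $\lambda$, choose a Borel $B_0$ for which $\lambda$ is $B_0$-dominant; this is possible because the Weyl chambers cover $\mathfrak{a}^*$. Then $\lambda$ pairs non-negatively with every positive coroot of $B_0$, and the non-negative expansion $x_{B_0} - x = \sum_\alpha c_\alpha \check{\alpha}$ supplied by the hypothesis yields $\lambda(x) \leq \lambda(x_{B_0}) \leq \max_B \lambda(x_B)$, as required. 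The main obstacle is the minimal-gallery claim in the auxiliary lemma --- tracking signs of coroots across walls to confirm that every reflecting coroot of a minimal gallery terminating at $B_0$ is positive for $B_0$; the remainder is a routine application of the positivity hypothesis and standard convex duality.
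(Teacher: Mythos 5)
Your proof is correct. Note that the paper itself offers no argument for this lemma --- it is quoted with a citation to \cite{Kot06} --- and your two-inclusion proof (telescoping along a minimal gallery to get $x_B \leq_{B_0} x_{B_0}$ for all pairs, then convexity for one inclusion and a separating functional made dominant by a suitable choice of Borel for the other) is essentially the standard argument from that reference, so there is nothing to contrast. The only cosmetic point is that for real points of $\mathfrak{a}$ the ``same image in $\Lambda_G$'' condition in the definition of $\text{Hull}\{\cdot\}^*$ should be read as same image in $\mathfrak{a}_G = \Lambda_G \otimes \mathbb{R}$, which, as you observe, is automatic on both sides since all differences involved lie in the real span of the coroots.
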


$\theta$ defines an involution on $X_*(A)$ by composition and thence an involution on $\mathfrak{a}$. Let $\mathfrak{a}^-$ be the $-1$-eigenspace of this restricted involution.

\begin{definition}
 Let $C$ be a Weyl facet in $\mathfrak{a}^-$.  We say that $C$ is $\theta$-split if $\theta(C) = -C$ and $\theta$-fixed if $\theta(C) = C$.   For example, $\theta$-split Weyl facets correspond to $\theta$-split parabolic subgroups, and $\theta$-fixed Weyl facets correspond to $\theta$-stable parabolic subgroups.
\end{definition}

On each $\theta$-split Weyl facet C, there is an involution $-\theta : C \rightarrow C$ defined by $x \mapsto -\theta(x)$.   On each $\theta$-fixed Weyl facet C, there is the restricted involution $\theta: C \rightarrow C$. If $C$ is a $\theta$-split Weyl facet, then the fixator in $C$ of $-\theta$ is $C \cap \mathfrak{a}^-$.

Let $C$ be a $\theta$-split Weyl facet.  Let $\{ e_i \}$ be the set of generating coweights for the one-dimensional Weyl facets in the boundary of $C$.  $-\theta$ permutes the $e_i$.  Let $\tau$ denote the permutation, so that $-\theta(e_i) = e_{\tau(i)}$.  Since $-\theta$ is an involution, $\tau$ has order two.  Observe that if $x = a_1 e_1 + \cdots + a_n e_n$, then $x \in \mathfrak{a}^-$ if and only if  $a_i = a_{\tau(i)}$ for all i.

\begin{prop}
\label{refinement_one}
Let $x = a_1 e_1 + \cdots + a_n e_n \in C$, $x_B = b_1 e_1 + \cdots + b_n e_n$, and $x^-_B = c_1 e_1 + \cdots + c_n e_n$, where $c_i = \text{min}(b_i, b_{\tau(i)})$.   Then whenever $x \in \mathfrak{a}^-$, $x \leq_B x_B$ is equivalent to $x \leq_B x^-_B$.
\end{prop}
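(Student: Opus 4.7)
The plan is to carry out an orbit-by-orbit comparison in the basis $\{e_i\}$, exploiting the symmetry $a_i = a_{\tau(i)}$ forced by $x \in \mathfrak{a}^-$. Since $\tau$ has order two, the index set partitions into fixed points of $\tau$ and $2$-element $\tau$-orbits. For a fixed point $\tau(i) = i$, we have $c_i = b_i$ automatically, so such indices contribute identically to $x_B - x$ and to $x_B^- - x$. All the content lies in the $2$-orbits $\{i, \tau(i)\}$.

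The easier implication, $x \leq_B x_B^- \Rightarrow x \leq_B x_B$, should follow by showing that
\[
x_B - x_B^- = \sum_i \bigl(b_i - c_i\bigr) e_i = \sum_i \max\!\bigl(0,\,b_i - b_{\tau(i)}\bigr)\, e_i
\]
already lies in the positive coroot cone of $B$. Indeed, each $e_i$ is a generating coweight for a one-dimensional Weyl facet in the boundary of $C$ (which sits inside the closed chamber $\bar{C}_B$), hence is a nonnegative combination of positive coroots of $B$; this amounts to the standard fact that the entries of the inverse Cartan matrix are nonnegative. Since the coefficients $\max(0,b_i - b_{\tau(i)})$ are nonnegative, the sum lies in the positive coroot cone, giving $x_B^- \leq_B x_B$ unconditionally, whence the implication.

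The harder direction, $x \leq_B x_B \Rightarrow x \leq_B x_B^-$, is where the hypothesis $x \in \mathfrak{a}^-$ is genuinely used. The plan is to test both inequalities against fundamental weights $\varpi_k$ of $B$ (using $v \geq_B 0 \iff \langle \varpi_k, v\rangle \geq 0$ for all $k$). The involution $\theta$ acts on fundamental weights by $\theta(\varpi_k) = -\varpi_{\tau(k)}$ (dually to its action on the $e_i$), so $x \in \mathfrak{a}^-$ gives the key identity $\langle \varpi_k, x\rangle = \langle \varpi_{\tau(k)}, x\rangle$. Applying the inequality $\langle \varpi_k, x_B - x\rangle \geq 0$ once with $k$ and once with $\tau(k)$ therefore produces a pair of bounds that differ only in the $x_B$-term, and the min construction $c_i = \min(b_i, b_{\tau(i)})$ is precisely what is needed to combine these two bounds $\tau$-orbit by $\tau$-orbit into the single bound $\langle \varpi_k, x_B^- - x\rangle \geq 0$. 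The $\tau$-equivariance of the Cartan matrix (hence of its inverse) is what ensures that passing from the pair of inequalities to the combined one respects the expression of each coroot cone in the $\{e_i\}$-basis.

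The main obstacle I expect is keeping the orbit-wise bookkeeping consistent: when one replaces $b_i$ or $b_{\tau(i)}$ by $c_i = c_{\tau(i)}$, one must verify that the substitution does not accidentally introduce a negative contribution from some other $\tau$-orbit, and this requires tracking both $\tau$-fixed and $\tau$-paired contributions to $\langle \varpi_k, \cdot\rangle$ simultaneously. Once the orbit decomposition is set up correctly and the $\tau$-symmetry of the inverse Cartan matrix is used to rewrite $\langle \varpi_k, x_B^-\rangle$ as the minimum of the two $\tau$-related pairings of $\langle \varpi, x_B\rangle$, the equivalence collapses to an elementary inequality between minima and pairs of linear bounds, and the proposition follows.
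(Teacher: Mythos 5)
Your easy direction is fine (and in fact supplies a justification, via nonnegativity of the inverse Cartan matrix, that the paper leaves implicit). The genuine gap is in the harder direction, and it is not a bookkeeping issue: the combination step you describe simply does not go through. Testing against fundamental weights amounts to reading $x \leq_B y$ as ``$y-x$ lies in the nonnegative span of the $B$-positive coroots,'' and under that reading the two inequalities $\langle \varpi_k, x_B - x\rangle \geq 0$ and $\langle \varpi_{\tau(k)}, x_B - x\rangle \geq 0$ do \emph{not} imply $\langle \varpi_k, x_B^- - x\rangle \geq 0$, because $\langle \varpi_k, x_B^-\rangle = \sum_i \min(b_i, b_{\tau(i)})\,\langle \varpi_k, e_i\rangle$ is a sum of coordinatewise minima weighted by the (generally all positive) entries $\langle \varpi_k, e_i\rangle$, and this can be strictly smaller than both $\langle \varpi_k, x_B\rangle$ and $\langle \varpi_{\tau(k)}, x_B\rangle$. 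Concretely, in type $A_2$ with $\tau$ interchanging the two fundamental coweights, take $x = e_1 + e_2 \in C \cap \mathfrak{a}^-$ and $x_B = 3e_1$, so that $x_B - x = 2e_1 - e_2 = \alpha_1^{\vee}$; then $\langle \varpi_1, x_B - x\rangle = 1$ and $\langle \varpi_2, x_B - x\rangle = 0$, so both of your bounds hold, yet $x_B^- = \min(3,0)(e_1+e_2) = 0$ and $\langle \varpi_k, x_B^- - x\rangle = -1 < 0$. So no amount of orbit-wise bookkeeping with the $\tau$-symmetry of the inverse Cartan matrix can close the argument: under the fundamental-weight (coroot-cone) reading of $\leq_B$, the implication you are trying to prove is false for a single Borel $B$.

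The paper's own proof takes a different, and much more naive, reading of the order: it interprets $x \leq_B x_B$ as the coordinatewise inequalities $a_i \leq b_i$ in the basis $\{e_i\}$ (equivalently, pairing against the simple roots rather than the fundamental weights). With that reading both directions are immediate: $a_i \leq b_i$ together with $a_i = a_{\tau(i)} \leq b_{\tau(i)}$ gives $a_i \leq \min(b_i, b_{\tau(i)}) = c_i$, and the converse follows from $c_i \leq b_i$ and transitivity. So if you want to prove the proposition as stated and proved in the paper, you must work with that coordinatewise order throughout; your strategy of dualizing against fundamental weights changes the meaning of $\leq_B$ to the coroot-cone order, for which the statement itself breaks down, as the $A_2$ example shows.
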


\begin{proof}
Indeed, $x \in \mathfrak{a}^-$ implies that $\theta(x) = - x$ or $a_i = a_{\tau(i)}$ for all i.  $x \leq_B x_B$ implies that $a_i \leq b_i$ for all $i$, and therefore that $a_i = a_{\tau(i)} \leq b_{\tau(i)}$ also.  Consequently, $a_i \leq \text{min}(b_i, b_{\tau(i)})$, and $x \leq_B x^-_B$.   On the other hand $x \leq_B x^-_B$ implies $x \leq_B x_B$ immediately, because $x^-_B \leq_B x_B$ and the relation $\leq_B$ is transitive.
\end{proof}

In general, when $x = a_1 e_1 + \cdots + a_n e_n \in C$, a $\theta$-split Weyl facet, we shall write $x^- = c_1 e_1 + \cdots + c_n e_n$, where $c_i = \text{min}(a_i, a_{\tau(i)})$.  This is well-defined because $x$ uniquely determines each $a_i$ (Weyl chambers are unbounded simplices).  Observe that $x^-$ is a coweight whenever $x$ is.

When $\{ x_B : B \in \mathcal{P}(A)\}$ is a $(G, A)$-orthogonal set, $B$ is a given Borel, and $P \supset B$ is a parabolic subgroup adjacent to $B$, we set $x_P$ equal to the projection of $x_B$ onto the Weyl facet corresponding to $P$ in $\mathfrak{a}$.   Because $\{ x_B : B \in \mathcal{P}(A)\}$ is $(G, A)$-orthogonal, this element is well-defined.

\begin{coro}
If $\{ x_B : B \in \mathcal{P}(A)\}$ is a $(G, A)$-orthogonal set in $\mathfrak{a}$, then $\{ x_B^{-} : B \in \mathcal{P}(A)^{-}\}$ is a $(G, A^-)$-orthogonal set in $\mathfrak{a}^-$.
\end{coro}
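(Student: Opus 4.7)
The plan is to verify the defining relation of a $(G, A^-)$-orthogonal set in $\mathfrak{a}^-$: for any two adjacent elements $B, B' \in \mathcal{P}(A)^-$---where adjacency means the restricted chambers $C_B \cap \mathfrak{a}^-$ and $C_{B'} \cap \mathfrak{a}^-$ of $\Phi(A^-, G)$ share a codimension-one wall---the difference $x_B^- - x_{B'}^-$ is a nonnegative real multiple of the restricted coroot $\check\alpha^-$ that is positive for $B$ and negative for $B'$. I would begin by recording an intrinsic reformulation of Proposition \ref{refinement_one}: $x_B^-$ is the unique $\leq_B$-maximal element of $\overline{C_B} \cap \mathfrak{a}^-$ lying below $x_B$. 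This makes $x_B^-$ depend on $x_B$ only through the $B$-dominance order and already places it on a cone whose walls in $\mathfrak{a}^-$ are exactly the walls of the restricted chamber structure, so only the correct coroot can appear.

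I would then exploit the $(G, A)$-orthogonality via a minimal gallery $B = B_0, B_1, \ldots, B_k = B'$ in $\mathcal{P}(A)$. The telescoping identity
\[ x_B - x_{B'} = \sum_i r_i \check\beta_i, \qquad r_i \ge 0, \]
holds, where each $\check\beta_i$ is the simple coroot separating $B_i$ from $B_{i+1}$. Because both endpoints lie in $\mathcal{P}(A)^-$, the coroots $\check\beta_i$ appearing in the gallery organise into $-\theta$-orbits, and each orbit restricts in $\mathfrak{a}^-$ to a positive multiple of the single restricted coroot $\check\alpha^-$. Consequently the projection of $x_B - x_{B'}$ to $\mathfrak{a}^-$ is itself a nonnegative multiple of $\check\alpha^-$.

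The final step is to identify this projection with $x_B^- - x_{B'}^-$. Using the intrinsic characterization of $x_B^-$ together with the $-\theta$-symmetric pairing of coroots in the gallery, the $\min$-truncations defining $x_B^-$ and $x_{B'}^-$ behave identically on the directions transverse to $\mathfrak{a}^-$, so those contributions cancel upon subtraction and one is left with $x_B^- - x_{B'}^- = r \check\alpha^-$ for some $r \ge 0$. The main obstacle I anticipate is controlling the $\min$ operation across intermediate Borels $B_i$ that need not themselves be $\theta$-split; the argument relies on the paired coordinates $a_i, a_{\tau(i)}$ in the endpoint chamber bases remaining compatibly aligned along the gallery, so that the truncation collapses the transverse discrepancy uniformly at both ends and leaves only the expected restricted-coroot displacement.
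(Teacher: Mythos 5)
There is a genuine gap, and it sits exactly where you say you anticipate trouble: the passage from the projection statement of your second paragraph to the identity $x_B^- - x_{B'}^- = r\,\check\alpha^-$ is asserted rather than proven. The truncation $x \mapsto x^-$ is a coordinatewise minimum taken in the extreme-ray basis of the facet attached to each element of $\mathcal{P}(A)^-$; for the two adjacent parabolics these are two \emph{different} bases, and $x^-$ is not the orthogonal projection of $x$ to $\mathfrak{a}^-$, so knowing that the projection of $x_B - x_{B'}$ to $\mathfrak{a}^-$ lies on the line through the restricted coroot says nothing directly about the difference of the two min-truncations. The claim that the min-truncations ``behave identically on the directions transverse to $\mathfrak{a}^-$'' is precisely the content of the corollary, and no argument is offered for it; your closing sentence concedes this. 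The paper closes this gap by using Proposition \ref{refinement_one} as a transfer device and arguing by contradiction: if $x_B^-$ and $x_{B'}^-$ projected to different points of the wall $C$ separating the two restricted chambers, one could pick a test point $y \in C \subset \mathfrak{a}^-$ with $y \leq_B x_B^-$ but not $y \leq_{B'} x_{B'}^-$; Proposition \ref{refinement_one} converts these into the same relations with the untruncated points $x_B$ and $x_{B'}$, contradicting the $(G,A)$-orthogonality of the original family. Your first paragraph gestures at Proposition \ref{refinement_one} through an ``intrinsic reformulation'' ($x_B^-$ as the unique $\leq_B$-maximal element of $\overline{C_B} \cap \mathfrak{a}^-$ below $x_B$), but that reformulation is itself unproved (the proposition is an equivalence of dominance relations for points of the facet, not a maximality statement, and $x_B$ need not lie in $\overline{C_B}$ since the set is not assumed special), and in any case it is never actually deployed in your later steps.

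Two smaller points. The corollary does not assume the orthogonal set is positive, so the coefficients $r_i$ in your telescoped gallery need not be nonnegative, and the conclusion to aim for is only that $x_B^- - x_{B'}^-$ is \emph{some} real multiple of the restricted coroot. Also, the mechanism in your second paragraph is off: a minimal gallery joining Borel subgroups of the two adjacent $\theta$-split parabolics also crosses walls of roots of the Levi $Z_G(A^-)$, whose coroots project to $0$ in $\mathfrak{a}^-$ rather than to positive multiples of $\check\alpha^-$, and the crossed coroots need not pair up into $-\theta$-orbits; the correct (and sufficient) observation is simply that each crossed coroot projects either to zero or into $\mathbb{R}\,\check\alpha^-$. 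With that repair your second paragraph is fine, but it still only controls projections, not truncations, so the main gap remains.
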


\begin{proof}
Since $x \leq_P y$ in $\mathfrak{a}^-$ if and only if $x \leq_P y$ in $\mathfrak{a}$, we will not distinguish these relations.  Let $x_P^-$ and $x_{P'}^-$ project to different elements of the Weyl facet $C$ separating the Weyl chambers corresponding to $P$ and $P'$ in the Weyl fan of $\mathfrak{a}^-$.   Then there exists an element $y \in C$ such that $y \leq_B x_B^-$ but $y \nleq_{B'} x_{B'}^-$.   By proposition \ref{refinement_one}
, then, $y \leq_B x_B$ while $y \nleq_{B'} x_{B'}$, so that $\{ x_B : B \in \mathcal{P}(A)\}$ is not a $(G, A)$-orthogonal set, contrary to assumption.    
\end{proof}

\begin{coro}
 Let $\{ x_B : B \in \mathcal{P}(A)\}$ be a positive $(G, A)$-orthogonal set for some $\theta$-stable torus $A$.  Then
\begin{align*}
\text{Hull}\{x_B : B \in \mathcal{P}(A) \}^{*} \cap \mathfrak{a}^- = \text{Hull}\{x^-_P : P \in \mathcal{P}(A)^{-} \}^{*}
\end{align*} 
\end{coro}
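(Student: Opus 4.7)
The plan is to reduce both convex hulls to systems of linear inequalities using the preceding lemma, and then match up the inequalities using Proposition \ref{refinement_one}.

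First I would apply the lemma to the original positive $(G,A)$-orthogonal set to get
\[ \text{Hull}\{x_B : B \in \mathcal{P}(A)\}^{*} = \{ y \in \mathfrak{a} : y \leq_B x_B, \; \forall B \in \mathcal{P}(A) \}, \]
so the left-hand side is the polytope cut out in $\mathfrak{a}^-$ by the inequalities $y \leq_B x_B$. In parallel, by the preceding corollary, $\{ x_P^- : P \in \mathcal{P}(A)^-\}$ is a $(G,A^-)$-orthogonal set in $\mathfrak{a}^-$. I would first verify that this set is also \emph{positive}: when $P$ and $P'$ are adjacent in $\mathcal{P}(A)^-$, the difference $x_P^- - x_{P'}^-$ is a non-negative multiple of the restricted coroot $\beta_{P,P'}$, since the $\min$ operation in the symmetrization respects the sign conditions coming from positivity of $\{x_B\}$ and the $-\theta$-symmetry pairing up coordinates. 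Then by the lemma applied inside $\mathfrak{a}^-$, the right-hand side equals $\{ y \in \mathfrak{a}^- : y \leq_P x_P^-, \; \forall P \in \mathcal{P}(A)^-\}$.

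Second, I would establish the equality of these two systems of inequalities on $\mathfrak{a}^-$, using Proposition \ref{refinement_one} as the crucial tool. For the forward inclusion, suppose $y \in \mathfrak{a}^-$ satisfies $y \leq_B x_B$ for every Borel $B \in \mathcal{P}(A)$. Fix $P \in \mathcal{P}(A)^-$ with its associated $\theta$-split Weyl facet $C$, and choose a Borel $B \subset P$ whose Weyl chamber has $C$ in its closure. Expanding $x_B$ in the basis $\{e_i\}$ of $C$, Proposition \ref{refinement_one} turns $y \leq_B x_B$ into $y \leq_B x_B^-$; and since $x_P^-$ is the projection of $x_B^-$ onto $C$ inside $\mathfrak{a}^-$, this descends to the restricted inequality $y \leq_P x_P^-$ on the coroots of $\Phi(A^-, G)$ associated to $P$.

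Third, for the reverse inclusion, I would fix $y \in \mathfrak{a}^-$ with $y \leq_P x_P^-$ for all $P \in \mathcal{P}(A)^-$. Given any Borel $B \in \mathcal{P}(A)$, let $P$ be the unique minimal $\theta$-split parabolic containing $B$, with facet $C$. Lift $y \leq_P x_P^-$ in $\mathfrak{a}^-$ back up to $y \leq_B x_B^-$ in $\mathfrak{a}$ (this uses only that $x_B^-$ is the symmetrization of $x_B$ in the basis of $C$ and that $y \in \mathfrak{a}^-$ is already symmetric), and then apply Proposition \ref{refinement_one} once more to conclude $y \leq_B x_B$.

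The main obstacle is the careful translation between the ambient-inequality $y \leq_B x_B^-$ in $\mathfrak{a}$ and the restricted-inequality $y \leq_P x_P^-$ in $\mathfrak{a}^-$, together with the verification of positivity of $\{x_P^-\}$ needed to apply the lemma in the restricted setting. Both come down to bookkeeping with coroots in $\Phi(A,G)$ versus $\Phi(A^-, G)$ under the projection $\mathfrak{a} \twoheadrightarrow \mathfrak{a}^-$, and the essential geometric content has already been packaged into Proposition \ref{refinement_one} and the preceding corollary.
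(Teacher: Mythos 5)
Your proposal is correct and follows essentially the same route as the paper: characterize both hulls by the inequalities $y \leq_B x_B$ and $y \leq_P x_P^-$ via the quoted lemma, and pass between them using Proposition \ref{refinement_one} together with the fact that $\{x_P^-\}$ forms an orthogonal set in $\mathfrak{a}^-$. The paper compresses this into a single chain of equivalences for dominant $x$, while you spell out both inclusions and the positivity of the restricted set, but the underlying argument is the same.
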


\begin{proof}
Let $x \in C \cap \mathfrak{a}^-$ for some Weyl facet $C$, and let $x_P$  be the unique element in $\{ x_B : B \in \mathcal{P}( A)\} \cap C$.   By the preceding proposition and corollary,  when $x$ is dominant,
\begin{align*}
	x \in \text{Hull}\{x_B : B \in \mathcal{P}(A) \}^{*} \cap \mathfrak{a}^- &\iff x \leq_B x_B \\
			 &\iff x \leq_B x_B^- \\
			 &\iff x \in \text{Hull}\{x^-_P : P \in \mathcal{P}(A)^{-} \}^{*}
\end{align*}
as required.
\end{proof}

Let $P$ be a parabolic subgroup with Levi component $M$.  The map $H_P : G \rightarrow \Lambda_M$ may be composed with the natural map $\Lambda_M \rightarrow \mathfrak{a}_M$, providing a second map $H_P : G \rightarrow \mathfrak{a}_M \subset \mathfrak{a}$.  We will not distinguish between these two maps with additional notation.  

\begin{lemma}
\label{refinement_two}
Let $P$ be a $\theta$-split parabolic subgroup containing $A$. Then $H_{\bar{P}} ( \theta(g) ) - H_P(g)$ is $\theta$-split in $\mathfrak{a}$. 
\end{lemma}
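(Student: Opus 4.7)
My plan is to exploit the fact that $P$ being $\theta$-split forces $\theta(P) = \bar{P}$, so that applying $\theta$ to an Iwasawa decomposition with respect to $P$ produces one with respect to $\bar{P}$. This will yield the key identity
\[ H_{\bar{P}}(\theta(g)) = \theta\bigl( H_P(g) \bigr), \]
from which the lemma follows by a one-line calculation using $\theta^2 = \mathrm{id}$.

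First I would record the relevant $\theta$-stability properties: since $P \in \mathcal{P}(M)^-$, by definition $\theta(P) = \bar{P}$; since $M$ is the common Levi component of the opposite pair $P, \bar{P}$, necessarily $\theta(M) = M$, and $\theta$ sends the unipotent radical $N$ of $P$ to the unipotent radical $\bar{N}$ of $\bar{P}$. Furthermore, because $\mathbb{G}$ and $\theta$ are defined over $\mathcal{O}$ and $K = \mathbb{G}(\mathcal{O})$, the hyperspecial maximal compact $K$ is $\theta$-stable. Given $g$ with Iwasawa decomposition $g = mnk$ along $P$, applying $\theta$ termwise yields
\[ \theta(g) = \theta(m)\,\theta(n)\,\theta(k), \qquad \theta(m) \in M,\ \theta(n) \in \bar{N},\ \theta(k) \in K, \]
which is an Iwasawa decomposition of $\theta(g)$ along $\bar{P}$.

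Next I would verify that $H_M$ intertwines $\theta$ with the involution it induces on $\Lambda_M$ via its action on cocharacters. The map $H_M$ is trivial on $K \cap M$ and coincides with the quotient $A \twoheadrightarrow A/(A \cap K)$ on $A$; since $\theta$ preserves $A$ and $K \cap M$, the identity $\theta \circ H_M = H_M \circ \theta$ holds on $A$, and extends to $M$ by the Cartan decomposition internal to $M$ (noting that $A$ is a maximal $F$-split torus of $M$, as $M = Z_G(A_M)$ and $A \supset A_M$). Combined with the $\bar{P}$-Iwasawa decomposition above, this yields
\[ H_{\bar{P}}(\theta(g)) = H_M(\theta(m)) = \theta\bigl( H_M(m) \bigr) = \theta\bigl( H_P(g) \bigr). \]

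Applying $\theta$ to the element in question and using $\theta^2 = \mathrm{id}$ together with the identity just derived,
\[ \theta\bigl( H_{\bar{P}}(\theta(g)) - H_P(g) \bigr) = H_P(g) - H_{\bar{P}}(\theta(g)) = -\bigl( H_{\bar{P}}(\theta(g)) - H_P(g) \bigr), \]
so this element lies in $\mathfrak{a}^-$, as required. The one technical point is checking that $\theta$ preserves $K$ and descends to a well-defined involution of $\Lambda_M$ compatible with $H_M$, and I expect this to be the main (and essentially only) obstacle; it follows cleanly from the $\mathcal{O}$-rationality of $\mathbb{G}$ and $\theta$ hypothesized at the start of the paper.
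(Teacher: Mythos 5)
Your proposal is correct and follows essentially the same route as the paper's proof: decompose $g = mnk$ along $P$, use $\theta(P) = \bar{P}$ to see $H_{\bar{P}}(\theta(g)) = H_M(\theta(m))$, invoke the compatibility $\theta(H_M(m)) = H_M(\theta(m))$ via the Cartan decomposition and $\theta$-stability of $A$ and $K$, and conclude by applying $\theta$ to the difference. Your writeup merely spells out the $\theta$-stability of $K$ and the intertwining of $H_M$ with $\theta$ in slightly more detail than the paper does.
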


\begin{proof}
Let $P = MN$ be the Levi decomposition of the parabolic subgroup $P$, and set $g = mnk$, where $m \in M$, $n \in N$, and $k \in K$.  Because $P$ is $\theta$-split, $\theta(n)$ belongs to the unipotent subgroup of $\bar{P}$, and so $H_{\bar{P}}( \theta(g) )  =  H_M (\theta(m))$.  On the other hand, $H_P(g) = H_M (m)$ by definition.  Since $\theta$ commutes with the composition of the maps $A \rightarrow X_*(A) \rightarrow \Lambda_M \rightarrow \mathfrak{a}_M$ and preserves $K$, the Cartan decomposition then implies that $\theta (H_M(m)) = H_M(\theta(m))$.  With these observations in hand, we can apply $\theta$ to $H_{\bar{P}} ( \theta(g) )- H_P(g)$ and simply compute:
\begin{align*}
	\theta ( H_{\bar{P}} ( \theta(g) ) - H_P(g) ) &=  \theta( H_{\bar{P}} (\theta(g)) )  - \theta ( H_P(g) ) 
				=  \theta ( H_{M} (\theta(m) ) - \theta ( H_M (m) ) \\
				&= H_M (m)- H_M (\theta(m) )  
				= H_P (g) - H_{\bar{P}} (\theta(g)).
\end{align*}
So $H_{\bar{P}} ( \theta(g) )- H_P(g)$ is $\theta$-split in $\mathfrak{a}$.
\end{proof}

\begin{prop}
We can express the asymptotic weight factors using only $\theta$-split parabolics and tori:
\begin{align*}
	\omega^{asymp}_M(g,\mu) &= \# \left\{ \nu \in X_*(A_M^-) \cap \text{Im\,}\tau : \nu \in \text{Hull} \{ (\mu_P)^- - H_P(g) + H_{\bar{P}} (\theta(g)) :   P \in \mathcal{P}(A)^{-} \}^{*} \right\}.
\end{align*} 
\end{prop}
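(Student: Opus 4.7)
My approach is to reduce the defining count for $\omega_M^{asymp}(g,\mu)$ to the claimed one in three steps: pass to the $\theta$-split subtorus in the index set, intersect the convex hull with $\mathfrak{a}^-$ using the preceding corollary on $(G,A)$-orthogonal sets, and identify the resulting vertices using lemma \ref{refinement_two}.

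First I would show $X_*(A_M) \cap \text{Im\,}\tau = X_*(A_M^-) \cap \text{Im\,}\tau$. Any element $\tau(x) = \theta(x)^{-1}x$ satisfies $\theta(\tau(x)) = x^{-1}\theta(x) = \tau(x)^{-1}$, so every element of $\text{Im\,}\tau$ is $\theta$-split. An element of $A_M$ that is $\theta$-split lies in the $\theta$-split part of $A_M$, whose identity component is $A_M^-$ and whose finite component group, being $2$-torsion, is absorbed into the maximal compact $A_M \cap K$. Passing modulo $A_M \cap K$, every coweight class with a representative in $\text{Im\,}\tau$ is therefore represented in $A_M^-$, and in particular any such $\nu$ lies in $\mathfrak{a}^-$.

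Next, under the regularity hypothesis that drives Arthur's key geometric lemma, the family $x_B := \mu_B - H_B(g) + H_{\bar B}(\theta(g))$ indexed by $B \in \mathcal{P}(A)$ is a positive $(G,A)$-orthogonal set: across adjacent Borels, $\mu_B - \mu_{B'}$ is a large positive multiple of the separating coroot, which dominates the bounded perturbation from the two $H$-terms. The preceding corollary then yields
\[ \text{Hull}\{x_B\}^* \cap \mathfrak{a}^- = \text{Hull}\{x_P^- : P \in \mathcal{P}(A)^-\}^*. \]

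Finally, for $P \in \mathcal{P}(A)^-$, lemma \ref{refinement_two} says $H_{\bar P}(\theta(g)) - H_P(g)$ is $\theta$-split. Because the operation $(\cdot)^-$ is defined coordinatewise in the facet basis by $a_i \mapsto \min(a_i, a_{\tau(i)})$, it satisfies $(u+v)^- = u^- + v$ whenever $v \in \mathfrak{a}^-$; applied with $u = \mu_P$ and $v = H_{\bar P}(\theta(g)) - H_P(g)$ this yields $x_P^- = (\mu_P)^- - H_P(g) + H_{\bar P}(\theta(g))$, which is exactly the vertex family in the proposition. Combining the three steps gives the claim. The main obstacle is ensuring positivity of $\{x_B\}$ sharply enough to invoke the corollary; once that is secured, the rest is a formal assembly of structural identities already in place.
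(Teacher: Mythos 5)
Your proposal is correct and follows essentially the same route as the paper: reduce to the hull identity $\text{Hull}\{x_B\}^* \cap \mathfrak{a}^- = \text{Hull}\{x_P^- : P \in \mathcal{P}(A)^-\}^*$ via the corollary on positive $(G,A)$-orthogonal sets, identify the vertices through the coordinatewise computation $(\mu_P + v)^- = \mu_P^- + v$ for $v = H_{\bar{P}}(\theta(g)) - H_P(g)$ $\theta$-split (lemma \ref{refinement_two}), and conclude with $X_*(A_M^-) \cap \text{Im\,}\tau = X_*(A_M) \cap \text{Im\,}\tau$. The only difference is that you make explicit the positivity of the orthogonal set $\{\mu_B - H_B(g) + H_{\bar{B}}(\theta(g))\}$ for sufficiently regular $\mu$, which the paper leaves implicit in its appeal to the corollary.
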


\begin{proof}
Let $\mathfrak{a}^-$ be the $-1$-eigenspace of $\theta$.  Any coweight $\nu$ in $X_*(A_M)$ maps to $\mathfrak{a}^-$, so we are only required to show that 
\begin{align*}
 \text{Hull} \{ \mu^-_P - &H_P(g) + H_{\bar{P}} (\theta(g)) :   P \in \mathcal{P}(A)^{-} \}^{*} =  \text{Hull} \{ \mu_P - H_P(g) + H_{\bar{P}} (\theta(g)) :  P  \in \mathcal{P}(A) \}^{*} \cap \mathfrak{a}^-
\end{align*}
By the second corollary, this follows from the identity,
\begin{align*}
	 \mu^-_P - H_P(g) + H_{\bar{P}} (\theta(g)) &=  ( \mu_P - H_P(g) + H_{\bar{P}} (\theta(g)))^{-}.
\end{align*}

Set $\mu_P = a_1 e_1 + \cdots + a_n e_n$ and $H_{\bar{P}} (\theta(g)) - H_P(g) = b_1 e_1 + \cdots + b_n e_n$.   By lemma \ref{refinement_two}, $H_{\bar{P}} (\theta(g)) - H_P(g)$ is $\theta$-split, and so $b_i = b_{\tau(i)}$ for all $i$.     
\begin{align*}
	 ( \mu_P - H_P(g) + H_{\bar{P}} (\theta(g)))^{-} &= \left( (a_1 + b_1) e_1 + \cdots + (a_n + b_n) e_n \right)^{-} \\
	 								&= \left( \text{min}( a_1 + b_1, a_{\tau(1)} + b_{\tau(1)} ) e_1+ \cdots + \text{min}(a_n + b_n, a_{\tau(n)} + b_{\tau(n)})e_n \right) \\
									&= \left( \text{min}( a_1 + b_1, a_{\tau(1)} + b_1 )e_1 + \cdots + \text{min}(a_n + b_n, a_{\tau(n)} + b_n )e_n \right) \\
									&= \left( \text{min}( a_1, a_{\tau(1)} )e_1 + \cdots + \text{min}(a_n, a_{\tau(n)})e_n \right) + \left( b_1 e_1 + \cdots + b_n e_n \right) \\
									&= \mu_P^{-} - H_P(g) + H_{\bar{P}} (\theta(g)).
\end{align*} 
The proposition now follows from the observation that $X_*(A_M^-) \cap \text{Im\,}\tau$ equals $X_*(A_M) \cap \text{Im\,}\tau$.  
\end{proof}

\subsection{Polynomial Weight Factors}
\label{polynomial_wf}

The simplest way to show that the functions $\omega^{asymp}$ describe polynomials for $\mu$ sufficiently regular is perhaps an appeal to the theory of toric varieties.   One can extract from \cite{Ful93} the relevant results, which are described in this context in \cite{Kot06}.   These express the number of lattice points inside an orthogonal set in terms of the Euler-Poincar\'{e} characteristic
\[ EP(\mathcal{L}) := \sum_i (-1)^i \dim H^i(V, \mathcal{L}) \]
of a line bundle $\mathcal{L}$ on a (toric) variety $V$.    For any automorphism $s$ of $V$, we will also need the function
\[ EP(s, \mathcal{L}) :=  \sum_i (-1)^i \text{tr\,}(s; H^i(V, \mathcal{L})) \]
so that $EP(\mathcal{L}) = EP(1, \mathcal{L})$ ($1$ denotes the identity automophism).

For each Levi subgroup $M$ in $\mathcal{L}^-$, we define a toric variety.  Specifically, let $\hat{G}$ and $\hat{M}$ denote the Langlands duals of $G$ and $M$ respectively.   The quotient $Z(\hat{M}) / Z(\hat{G})$ is a torus that we will call $T_{\hat{M}}$.  Its character group is $\Lambda_M$, and $\theta$ acts on $\Lambda_M$.   Let $\Lambda_M^{\theta}$ be the set of points in $\Lambda_M$ that $\theta$ fixes, and let $\Lambda_{M^{\theta} \backslash M}$ be the quotient of $\Lambda_M$ by $\Lambda_M^{\theta}$.   The action of $\theta$ on $\Lambda_M$ provides an action of $\theta$ on $T_{\hat{M}}$.  The Weyl fan of the adjoint group $\hat{G} / Z(\hat{G})$ induces a Weyl fan in the $\theta$-split part $T_{\hat{M}}^-$ of $T_{\hat{M}}$, whose character group is $\Lambda_{M^{\theta} \backslash M}$.   Let the toric variety defined by this induced fan be called $Y_{M^{\theta} \backslash M}$.  This variety is complete, non-singular, and equipped with an action of $T_{\hat{M}}^-$.  

One reason we are interested in this toric variety is that the group of isomorphism classes of $T_{\hat{M}}^-$-equivariant line bundles on $Y_M$ is isomorphic to the group of orthogonal sets in $\Lambda_{M^{\theta} \backslash M}$.  More explicitly, the fixed points of $T_{\hat{M}}^-$ on $Y_{M^{\theta} \backslash M}$ may be indexed in a natural way by the elements of $\mathcal{P}(M)^-$.   If we are given a $T_{\hat{M}}^-$-equivariant line bundle on $Y_M$, we can thereby extract the character $y_P$ by which $T_{\hat{M}}^-$ acts on the line over the fixed point indexed by $P$.   The points $y_P$ form an orthogonal set in $\Lambda_{M^{\theta} \backslash M}$.   The connection between orthogonal sets in $\Lambda_{M^{\theta} \backslash M}$ and their corresponding line bundles extends to the level of weight factors, as in the following proposition.   

\begin{prop} 
Let $\mathcal{L}$ be the line bundle on $Y_{M^{\theta} \backslash M}$ corresponding to the orthogonal set
\[ P \mapsto x_P \]
 in $\Lambda_{M^{\theta} \backslash M}$.  If this orthogonal set is positive, then 
\[ EP(\mathcal{L}) = | \{ \nu \in \Lambda_{M^{\theta} \backslash M} : \nu \in \text{ Hull\,} \{ x_P : P \in \mathcal{P}(M)^- \}^* \} |. \]
More generally, if $L$ is a subgroup of $\Lambda_{M^{\theta} \backslash M}$ with finite index, then 
\[ \frac{1}{|\mathcal{Z}_L|} \sum_{s \in \mathcal{Z}_L} EP(s, \mathcal{L}) = | \{ \nu \in L : \nu \in \text{ Hull\,} \{ x_P : P \in \mathcal{P}(M)^{-} \}^* \} | \]
where
\[ \mathcal{Z}_L := \text{Hom\,}(\Lambda_{M^{\theta} \backslash M} / L, \mathbb{C}^{\times}).\]
\end{prop}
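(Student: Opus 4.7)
The plan is to reduce the claim to the standard Euler–Poincaré/lattice-point dictionary for complete non-singular toric varieties, together with an averaging argument to handle the sublattice $L$.

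First, I would invoke the structural description of $T_{\hat M}^{-}$-equivariant line bundles on $Y_{M^\theta\backslash M}$. The fixed points of $T_{\hat M}^{-}$ on $Y_{M^\theta\backslash M}$ are naturally indexed by $\mathcal P(M)^{-}$, and the data of a $T_{\hat M}^{-}$-equivariant line bundle $\mathcal L$ is equivalent to the data of the orthogonal set $(x_P)_{P\in\mathcal P(M)^{-}}$ of characters by which $T_{\hat M}^{-}$ acts on the fibers over those fixed points. This is the dictionary recalled just before the proposition, and it identifies $\mathcal L$ with a piecewise-linear support function on the fan, whose Newton polytope is exactly $\text{Hull}\{x_P:P\in\mathcal P(M)^{-}\}$ inside $\Lambda_{M^\theta\backslash M}\otimes_{\mathbb Z}\mathbb R$.

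Second, I would apply the cohomology vanishing result from toric geometry (as in the treatment of \cite{Ful93} used in \cite{Kot06}): when the orthogonal set is positive, $\mathcal L$ is generated by global sections, so $H^i(Y_{M^\theta\backslash M},\mathcal L)=0$ for $i>0$, and $H^0(Y_{M^\theta\backslash M},\mathcal L)$ admits a canonical decomposition
\[
H^0(Y_{M^\theta\backslash M},\mathcal L)=\bigoplus_{\nu}\mathbb C_\nu,
\]
indexed by the lattice points $\nu\in\Lambda_{M^\theta\backslash M}$ lying in the Newton polytope, with $T_{\hat M}^{-}$ acting on the summand $\mathbb C_\nu$ by the character $\nu$. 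Taking the trace of the identity then gives exactly
\[
EP(\mathcal L)=\dim H^0(Y_{M^\theta\backslash M},\mathcal L)=|\{\nu\in\Lambda_{M^\theta\backslash M}:\nu\in\text{Hull}\{x_P\}^{*}\}|,
\]
once one matches the fixed-$\Lambda_G$-image condition encoded by $*$ with the natural grading of the toric sections (the sections in a given degree correspond to the slice of the Newton polytope mapping to that image in $\Lambda_G$). Verifying that the $*$ bookkeeping matches the grading on sections is the one place that needs a careful check but is purely formal.

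Third, for the general statement involving $L\subset\Lambda_{M^\theta\backslash M}$ of finite index, I would exploit that $\mathcal Z_L=\text{Hom}(\Lambda_{M^\theta\backslash M}/L,\mathbb C^{\times})$ is precisely the subgroup of $T_{\hat M}^{-}$ whose characters are trivial on $L$. From the character decomposition of $H^i$ above, an element $s\in\mathcal Z_L$ acts on $\mathbb C_\nu$ by the scalar $\langle s,\nu\rangle$, so
\[
EP(s,\mathcal L)=\sum_\nu \langle s,\nu\rangle
\]
summed over $\nu$ in the Newton polytope (with the $*$ restriction). Averaging and using the orthogonality relation
\[
\frac{1}{|\mathcal Z_L|}\sum_{s\in\mathcal Z_L}\langle s,\nu\rangle=\begin{cases}1,&\nu\in L,\\ 0,&\text{else,}\end{cases}
\]
picks out exactly the lattice points of $L$ in the hull, yielding the second formula.

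The main obstacle is really just a bookkeeping issue: ensuring that the grading of $H^0(Y_{M^\theta\backslash M},\mathcal L)$ by the image in $\Lambda_G$ matches the meaning of the superscript $*$ in $\text{Hull}\{x_P\}^{*}$, and checking that the specific toric variety $Y_{M^\theta\backslash M}$ (built from the induced Weyl fan on $T_{\hat M}^{-}$) indeed satisfies the completeness and non-singularity hypotheses required for cohomology vanishing. Both are standard but must be explicitly verified.
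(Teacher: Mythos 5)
Your proposal is correct, and it is essentially the argument the paper itself relies on: the paper states this proposition without proof, appealing to the toric-variety results of \cite{Ful93} as presented in \cite{Kot06}, and your sketch (equivariant line bundles indexed by the orthogonal set of fixed-point characters, Demazure-type vanishing for the positive/convex case, the character decomposition of $H^0$ by lattice points of the hull, and averaging over $\mathcal{Z}_L$ via orthogonality of characters of the finite group $\Lambda_{M^{\theta}\backslash M}/L$) is precisely that standard argument, with the $*$-grading check being the only bookkeeping point, as you note.
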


Our main application of this proposition is the following proposition, which expresses the asymptotic weight factors in terms of these line bundles.

\begin{coro}
For $\mu$ sufficiently regular, the family of points
\[ B \mapsto \mu_B^{-} - H_B(g) + H_{\bar{B}} (\theta(g))  \]
form an orthogonal set in $\Lambda_{M^{\theta} \backslash M}$.   Let $\mathcal{L}$ be the corresponding line bundle on $Y_{M^{\theta} \backslash M}$ and let $L$ denote a sublattice of finite index in $\Lambda_{M^{\theta} \backslash M}$.  Then,
 \begin{align*}
  \frac{1}{|\mathcal{Z}_L|} \sum_{s \in \mathcal{Z}_L} EP(s, \mathcal{L}) &= | \{ \nu \in L: \nu \in \text{ Hull\,}\{\mu_B^{-} -H_B(g) + H_{\bar{B}} (\theta(g))  \}^* \} |. 
  \end{align*}
\end{coro}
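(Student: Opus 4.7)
The plan is to string together Arthur's key geometric lemma, the corollaries on $\theta$-split orthogonal sets established just above, and the preceding toric-variety proposition; the corollary is essentially an assembly of these results once the indexing is sorted out.

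First, I would apply Arthur's key geometric lemma to the pair $(x_1, x_2) = (\theta(g), g)$. Choosing $\mu$ sufficiently regular so that $\langle \alpha, \mu\rangle \geq c\bigl[1 + d(g) + d(\theta(g))\bigr]$ for every simple root $\alpha$ (with $c$ the constant furnished by that lemma), the family
\[
x_B \;:=\; \mu_B - H_B(g) + H_{\bar{B}}(\theta(g)), \qquad B \in \mathcal{P}(A),
\]
is a $(G,A)$-orthogonal set in $X_*(A)$. Because each $\mu_B$ is $B$-dominant and the perturbation $H_{\bar{B}}(\theta(g)) - H_B(g)$ is uniformly bounded in terms of $d(g)$ and $d(\theta(g))$, the regularity hypothesis also guarantees that this orthogonal set is positive (in fact special).

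Next, I would take $(-)$-components. Lemma \ref{refinement_two} shows that $H_{\bar{B}}(\theta(g)) - H_B(g)$ is $\theta$-split in $\mathfrak{a}$, and the algebraic identity in the proof of the previous proposition then gives
\[
(x_B)^- \;=\; \mu_B^{-} - H_B(g) + H_{\bar{B}}(\theta(g)).
\]
Combining this with the corollary that the $(-)$ operation converts a $(G,A)$-orthogonal set into a $(G,A^-)$-orthogonal set in $\mathfrak{a}^-$, we obtain the claimed orthogonal set, now indexed by $\theta$-split parabolics via the bijection $\mathcal{P}(A)^- \leftrightarrow$ Weyl chambers of $\Phi(A^-,G)$. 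Pushing forward under the natural quotient $\Lambda_{A^-}\twoheadrightarrow \Lambda_{M^{\theta}\backslash M}$ (which identifies Weyl facets meeting $\mathfrak{a}_M^-$ with elements of $\mathcal{P}(M)^-$) gives the orthogonal set $\{ y_P \}_{P \in \mathcal{P}(M)^-}$ in $\Lambda_{M^{\theta}\backslash M}$, and hence a $T_{\hat M}^-$-equivariant line bundle $\mathcal{L}$ on $Y_{M^{\theta}\backslash M}$.

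Finally, I would apply the preceding proposition to $\mathcal{L}$ and the finite-index sublattice $L\subset \Lambda_{M^{\theta}\backslash M}$, which directly yields
\[
\frac{1}{|\mathcal{Z}_L|}\sum_{s\in \mathcal{Z}_L} EP(s,\mathcal{L}) \;=\; \bigl|\{\nu \in L : \nu \in \text{Hull\,}\{y_P : P \in \mathcal{P}(M)^-\}^*\}\bigr|,
\]
which is the stated identity. The one point that requires care (and which I expect to be the main obstacle) is Step 3: verifying that projection from $\mathfrak{a}^-$ to $\mathfrak{a}_{M^{\theta}\backslash M}$ carries the orthogonal set, the sublattice, and the convex hull in compatible ways, so that the count on the right-hand side in $\Lambda_{M^{\theta}\backslash M}$ coincides with the count over the full $(G,A^-)$-orthogonal set in $\mathfrak{a}^-$. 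This reduces to the standard compatibility between the Weyl fan on $T_{\hat M}^-$ and the $\theta$-split parabolics containing $M$, together with positivity of the orthogonal set obtained in Step 2.
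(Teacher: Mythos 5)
Your proposal is correct and follows exactly the route the paper intends: the paper states this corollary without proof as a direct consequence of combining Arthur's key geometric lemma, the $\theta$-split refinement results (Lemma \ref{refinement_two} and the corollaries on the $(-)$ operation), and the immediately preceding proposition on $EP(s,\mathcal{L})$. Your assembly of these ingredients, including the care taken with positivity and the passage to $\Lambda_{M^{\theta}\backslash M}$, matches the paper's implicit argument.
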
 

The group of isomorphism classes of $T_{\hat{M}}^-$-equivariant line bundles is a finitely generated abelian group $E$, and is isomorphic to the group of orthogonal sets in $\Lambda_{M^{\theta} \backslash M}$.   There is a homomorphism of $\Lambda_{M^{\theta} \backslash M}$ into this group that sends each element $x$ of $\Lambda_{M^{\theta} \backslash M}$ to the constant orthogonal set, all of whose vertices are $x$.   The quotient $E / \Lambda_{M^{\theta} \backslash M}$ is isomorphic to $\text{Pic}(Y_{M^{\theta} \backslash M})$, which is a free abelian group, and there is a polynomial $F$ of degree $\dim Y_{M^{\theta} \backslash M}$ on the $\mathbb{Q}$-vector space $E / \Lambda_{M^{\theta} \backslash M} \otimes_{\mathbb{Z}} \mathbb{Q}$ such that
\[ EP(\mathcal{L}) = F(\mathcal{L}) \]
where we have precomposed $F$ with the surjection $E \twoheadrightarrow E / \Lambda_{M^{\theta} \backslash M}$ so that both these expressions are functions on $E$.   In particular, we may regard $EP$ as a polynomial function on $E$ (or $E / \Lambda_{M^{\theta} \backslash M}$).  

More generally, for any $s \in T_{\hat{M}}^-$, we can apply the localization theorem for equivariant $K$-theory to express $EP(s, \mathcal{L})$ as a sum of contributions from each connected component of the fixed point set $Y_{M^{\theta} \backslash M}^s$ of $s$ on $Y_{M^{\theta} \backslash M}$.  Each of these connected components is a complete non-singular toric variety for some quotient of $T_{\hat{M}}^-$, and so
\[ EP(s, \mathcal{L}) = \sum_{P \in \mathcal{P}(M)^-} \left< s, y_P \right> F_{s, P}(M) \]
for polynomial functions $F_{s, P}$ on $(E / \Lambda_{M^{\theta} \backslash M}) \otimes_{\mathbb{Z}} \mathbb{Q}$.    For fixed $s$, then, $EP(s, \mathcal{L})$ is a polynomial on $E$, as is any sum of functions with this form.   By the corollary, then, we deduce that the weight factors $\omega^{asymp}_M$ on the $\theta$-split side asymptotically equal polynomials.   

\begin{definition}
There is an injective map $X_*(A_M) \rightarrow \Lambda_{M^{\theta} \backslash M}$, so we may write the image of 
$X_*(A_M)^{-} \cap \text{Im\,} \tau$ as a union of lattices $L_1, L_2, \ldots, L_k$ in $\Lambda_{M^{\theta} \backslash M}$.   Define
\[ \nu_M(g, \mu) :=  \sum_{i=1}^k \frac{1}{|\mathcal{Z}_{L_i} |} \sum_{s \in \mathcal{Z}_{L_i}} EP(s, \mathcal{L}) \]
where $\mathcal{L}$ is the line bundle on $Y_M$ for which
\[  \frac{1}{|\mathcal{Z}_{L_i}|} \sum_{s \in \mathcal{Z}_{L_i}} EP(s, \mathcal{L}) =  | \{ \nu \in L_i : \nu \in \text{ Hull\,}\{\mu_B -H_B(g) + H_{\bar{B}} (\theta(g))  \}^* \} |\]
when $\mu$ is sufficiently regular.
\end{definition}

The weight factors $\nu_M(g, \mu)$ are polynomials in $\mu$, asymptotically equal to $\omega_M^{asymp}(g, \mu)$, and thence asymptotically equal to $\omega_M(g,\mu)$.   It is the coefficients of these weight factors, especially the constant coefficient, that we would like to use as weight factors in the final relative local trace formula.

\section{Analysis} \label{4}

\subsection{The Adjoint Quotient}

We begin by recalling some facts about the ``adjoint quotient'' map, over which orbital integrals are especially well-behaved.  Set  $\mathbb{A}_G := \text{Spec\,} \mathcal{O}_{\mathfrak{g}}^G$, where $\mathcal{O}_{\mathfrak{g}}$ is the $F$-algebra of polynomial functions on $\mathfrak{g}$ and $\mathcal{O}_{\mathfrak{g}}^G$ is its set of $G$-invariants.   The inclusion
\begin{align*}
	\mathcal{O}_{\mathfrak{g}}^G &\hookrightarrow \mathcal{O}_{\mathfrak{g}}
\end{align*}
induces this adjoint quotient map
\begin{align*}
	\pi_G : \mathfrak{g} &\rightarrow \mathbb{A}_G
\end{align*}
which essentially maps each element of $\mathfrak{g}$ to its characteristic polynomial.  The Jacobian of the restriction of this map to a Cartan subalgebra is $|D^G(X)|^{\frac{1}{2}}$.   The fiber $\pi_G^{-1} (x)$ is a union of conjugacy classes for any $x \in \mathbb{A}_G$.   In particular,
\begin{align*}
	\pi_G^{-1} (0) = \mathcal{O}_1 \cup \cdots \cup \mathcal{O}_r =: \mathcal{N}
\end{align*}
where $\mathcal{O}_1, \mathcal{O}_2, \dots, \mathcal{O}_{r-1}$, and $\mathcal{O}_r$ are the nilpotent $G$-orbits of $\mathfrak{g}$ ordered by increasing dimension.   We define $\mathbb{A}_{G}'$ to be the set of points where this morphism is \'{e}tale (or where the Jacobian is nonzero).   If $M \subset G$, then the inclusion $\mathcal{O}_{\mathfrak{g}}^G \hookrightarrow \mathcal{O}_{\mathfrak{g}}^{M}$ induces another map $\mathbb{A}_{M} \rightarrow \mathbb{A}_G$, and the Jacobian of this map is $|D^G_M(X)|^{\frac{1}{2}}$. 

\subsection{Shalika Germs}

Because we have ordered the nilpotent $G$-orbits $\mathcal{O}_1, \mathcal{O}_2, \dots,\mathcal{O}_{r-1}$, and $ \mathcal{O}_r$ of $\mathfrak{g}$ by increasing dimension, the sets $\mathcal{O}_1 \cup \cdots \cup \mathcal{O}_i$ are closed in $\mathfrak{g}$ for all $i$.    In fact,  $\mathcal{O}_1 \cup \cdots \cup \mathcal{O}_i$ is closed in  $\mathcal{O}_1 \cup \cdots \cup \mathcal{O}_{i+1}$ and $\mathcal{O}_{i+1}$ is a complementary open subset.   These sets form a stratification of the nilpotent elements of $\mathfrak{g}$.   We review some basic properties of these orbits.   

$G$ acts on each $\mathcal{O}_i$ by conjugation.   Therefore, the exact sequence
\begin{align*}
	0 \rightarrow \mathcal{D}(\mathcal{O}_1) \rightarrow  \mathcal{D}(\mathcal{O}_1 \cup  \mathcal{O}_2) \rightarrow  \mathcal{D}(\mathcal{O}_2)  \rightarrow 0
\end{align*}
gives rise to an exact sequence
\begin{align*}
	0 \rightarrow \mathcal{D}(\mathcal{O}_1)^G \rightarrow  \mathcal{D}(\mathcal{O}_1  \cup  \mathcal{O}_2)^G \rightarrow  \mathcal{D}(\mathcal{O}_2)^G.
\end{align*}
According to \cite{Rao72}, any $G$-invariant distribution on $\mathcal{O}_2$ lifts to a $G$-invariant distribution on $\mathfrak{g}$.  The right hand map in this sequence is therefore surjective:
\begin{align*}
	0 \rightarrow \mathcal{D}(\mathcal{O}_1)^G \rightarrow  \mathcal{D}(\mathcal{O}_1  \cup  \mathcal{O}_2)^G \rightarrow  \mathcal{D}(\mathcal{O}_2)^G \rightarrow 0.
\end{align*}
This implies that $\dim \mathcal{D}(\mathcal{O}_1)^G = \dim \mathcal{D}(\mathcal{O}_2)^G = 1$, and thence $\dim \mathcal{D}(\mathcal{O}_1 \cup \mathcal{O}_2)^G = 2$.   In this manner, one proves by induction that
\begin{align*}
	\dim \mathcal{D} (\mathcal{O}_1 \cup \cdots \cup \mathcal{O}_i )^G &= i.
\end{align*}
Let $\mu_j$ be the integral over the $j^{th}$ nilpotent orbit.   Then the distributions in the set $\{ \mu_j : j \leq i \}$ span the space $\mathcal{D}(\mathcal{O}_1 \cup \cdots \cup \mathcal{O}_i)^G$.   Choose functions $f_i$ such that
\begin{align*}
	\mu_j (f_i) &= \delta_{ij}.
\end{align*}
The orbital integrals over each $f_i$ can be used to define Shalika germs on $\mathfrak{h}^{\perp}_{\theta-reg}$.   
\begin{definition}
Let $\tilde{\Gamma}_i$ denote the germ of the function
\begin{align*}
	X \mapsto \begin{cases}
				\int_{Z_G(X) \backslash G} f_i(g^{-1}Xg) \; \dot{dg} & X \in \mathfrak{h}^{\perp}_{\theta-reg} \\
				0 & X \notin \mathfrak{h}^{\perp}_{\theta-reg}
			\end{cases}
\end{align*}
at the origin.  The measures $\dot{dg}$ are chosen to be $G$-invariant and compatible with any isomorphisms $Z_G(X) \cong Z_G(Y)$ induced by conjugation.   We call $\tilde{\Gamma}_i$ the Shalika germ corresponding to the nilpotent orbit $\mathcal{O}_i$.
\end{definition}

At semisimple elements, we can write orbital integrals as integrals over compact sets, which are well-behaved.  This basic result of Harish-Chandra is not true at nilpotent elements, whose orbits are not necessarily closed.  We will nevertheless follow his development, which starts with the following critical lemma.

\begin{lemma}
\cite{HC70} Let $M \subset G$ be the centralizer of a torus in $G$, and let $\omega_{\mathfrak{g}} \subset \mathfrak{g}$ and $\omega_{M} \subset \mathbb{A}_M'$ be two compact sets.    Then
\begin{align*}
	\{   g \in M \backslash G : g^{-1} X g \in \omega_{\mathfrak{g}} \text{ for some } X \in \pi_M^{-1}(\omega_M) \}
\end{align*}
has compact closure in $M \backslash G$.
\end{lemma}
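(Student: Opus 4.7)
My plan is to show that the set in question, which I denote $S$, has compact closure in $M \backslash G$ by a sequential compactness argument combining two ingredients: the étale structure of $\pi_M$ over $\mathbb{A}_M'$, and the closedness of $G$-orbits of semisimple elements of $\mathfrak{g}$.

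First I would exploit the fact that the pair $(\mu^{-1}g, \mu^{-1}X\mu)$ produces the same $M$-coset $Mg$ and the same conjugate $g^{-1}Xg$ as $(g, X)$ for any $\mu \in M$. Since $\omega_M \subset \mathbb{A}_M'$ lies in the étale locus of $\pi_M$, the geometric fibers of $\pi_M$ over $\omega_M$ are single $M$-orbits, so the quotient $\pi_M^{-1}(\omega_M)/M$ is a compact Hausdorff space and I may select a compact transversal $\widetilde{\omega} \subset \pi_M^{-1}(\omega_M)$ meeting every $M$-orbit there. Given any sequence $\{M g_n\} \subset S$ with witnesses $X_n$, I use this ambiguity to arrange $X_n \in \widetilde{\omega}$ and then pass to subsequences so that $X_n \to X_0 \in \widetilde{\omega}$ and $Y_n := g_n^{-1} X_n g_n \to Y_0 \in \omega_{\mathfrak{g}}$. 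Because $X_0$ is $M$-regular it is semisimple in $\mathfrak{g}$, so its $G$-orbit $\mathcal{O}_{X_0}$ is closed, and continuity of $\pi_G$ yields $Y_0 \in \mathcal{O}_{X_0}$, hence $Y_0 = h^{-1}X_0 h$ for some $h \in G$.

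Second, I would translate the above into boundedness of $\{g_n\}$ modulo $M$ using an Iwasawa-type decomposition $g_n = \mu_n n_n k_n$ relative to a parabolic $P = MN$ with Levi $M$. Since $k_n \in K$ is automatically bounded, it suffices to bound $n_n \in N$. Setting $X_n^{\flat} := \mu_n^{-1} X_n \mu_n \in \mathfrak{m}$ and using $Y_n = k_n^{-1} n_n^{-1} X_n^{\flat} n_n k_n$, the $\mathfrak{m}$-component of $n_n^{-1} X_n^{\flat} n_n$ (obtained via $[\mathfrak{n}, \mathfrak{m}] \subset \mathfrak{n}$) equals $X_n^{\flat}$ itself, so the boundedness of $Y_n$ forces the boundedness of $X_n^{\flat}$. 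The closed embedding $Z_M(X_n) \backslash M \hookrightarrow \mathfrak{m}$ coming from the closedness of $M$-orbits of $M$-regular elements then gives $\mu_n$ bounded modulo a maximal torus of $M$, so the class $Mg_n = M n_n k_n$ is controlled entirely by $n_n$.

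The main obstacle lies in the final step of showing $\{n_n\}$ itself is bounded in $N$. The $\mathfrak{n}$-component of $n_n^{-1} X_n^{\flat} n_n$ is, to leading order, $\text{ad}(X_n^{\flat}) \log(n_n^{-1})$, so inverting $\text{ad}(X_n^{\flat})$ on $\mathfrak{n}$ would conclude the argument; but this inversion requires $X_n^{\flat}$ to be regular with respect to the central torus $A$ of $M$, a condition strictly stronger than the $M$-regularity ensured by $\omega_M \subset \mathbb{A}_M'$. I expect to resolve this by inducting on the semisimple rank of $G$ relative to $M$: stratify $\omega_M$ by the Levi type of the $G$-centralizer of points in its fiber, handle the open stratum (where $X_n^{\flat}$ is $A$-regular) by direct inversion, and on each deeper stratum reduce to a genuinely smaller Levi $L$ strictly between $M$ and $G$ to which the induction hypothesis applies.
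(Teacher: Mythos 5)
The decisive problem is your reading of $\mathbb{A}_M'$. In this paper (following Harish-Chandra and Kottwitz), $\mathbb{A}_M'$ is the locus where the finite map $\mathbb{A}_M \rightarrow \mathbb{A}_G$ is \'{e}tale, i.e.\ where $D^G_M \neq 0$; concretely, for $X \in \pi_M^{-1}(\omega_M)$ the semisimple part $X_s$ satisfies $\alpha(X_s) \neq 0$ for every root of $\mathfrak{g}$ outside $\mathfrak{m}$, equivalently $Z_G(X_s) \subset M$. That is exactly the regularity ``with respect to the central torus of $M$'' which you flag at the end as unavailable: it is not stronger than the hypothesis, it \emph{is} the hypothesis. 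Consequently your proposed rescue by stratifying $\omega_M$ and inducting over intermediate Levi subgroups cannot work: on a stratum where some root outside $\mathfrak{m}$ vanishes on $X_s$, the centralizer $Z_G(X_s)$ strictly contains $M$, the set in question contains the image of $Z_G(X_s)g$, and hence it is not relatively compact in $M \backslash G$ --- the statement is simply false there, which is precisely why $\omega_M \subset \mathbb{A}_M'$ is assumed. With the correct reading, your Iwasawa step is essentially Harish-Chandra's: the $\mathfrak{m}$-component argument bounds $X_n^{\flat}$, and compactness of $\omega_M$ gives a uniform lower bound $|D^G_M| \geq \epsilon > 0$, which is what makes $\mathrm{ad}(X_n^{\flat})$ uniformly invertible on $\mathfrak{n}$. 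But the ``to leading order'' inversion is where the actual content lies: $n_n$ is a priori unbounded, so one must solve for its components successively along the root-height filtration of $\mathfrak{n}$ (Harish-Chandra's successive approximation), and as written this step is asserted rather than proved.

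Two secondary points. Under the correct hypothesis, elements of $\pi_M^{-1}(\omega_M)$ need not be semisimple (only their semisimple parts are constrained), so the fibers over $\omega_M$ are not single $M$-orbits and the compact transversal $\widetilde{\omega}$ you construct need not exist. Moreover, the conclusion $Y_0 \in \mathcal{O}_{X_0}$ in your first paragraph does not follow from closedness of that orbit: a fiber of $\pi_G$ contains many orbits, and over a $p$-adic field several rational classes share the same characteristic polynomial; since nothing from that paragraph is used in the Iwasawa argument, it is dead weight rather than the fatal flaw. Note finally that the paper does not prove this lemma at all --- it cites \cite{HC70}, where the proof is the Iwasawa/successive-approximation argument indicated above (see also \cite{Kot06}).
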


The next few propositions are essentially classical, but we do not impose the condition that $X$ is regular semisimple in $\mathfrak{g}$.   Rather, we assume that $X$ is $\theta$-regular and semisimple in $\mathfrak{h}^{\perp}$.   But the proofs are mostly the same.

\begin{prop}
Let $f \in C_c^{\infty}(\mathfrak{g})$ and let $T$ be a maximal $\theta$-split torus with Lie algebra $\mathfrak{t}$.   Let $u$ be any locally constant left $Z_G(T)$-invariant function on $G$, which we will eventually assume to be a weight factor.  Then
\begin{align*}
	X &\mapsto O_X(f) := \int_{Z_G(T) \backslash G} f(g^{-1}Xg) \, u(g) \; dg
\end{align*}
is locally constant on $\mathfrak{t}' := \mathfrak{t} \cap \mathfrak{h}^{\perp}_{\theta-reg}$.
\end{prop}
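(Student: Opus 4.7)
Fix $X_0 \in \mathfrak{t}'$. The plan is to find an open neighborhood $\omega' \ni X_0$ in $\mathfrak{t}'$ on which $X \mapsto O_X(f)$ is constant. The standard two-step strategy is (a) a uniform support bound, coming from Harish-Chandra's compactness principle, that restricts the integration domain to a compact $\Omega \subset Z_G(T) \backslash G$ independent of $X$ in a small neighborhood of $X_0$; and (b) local constancy of the integrand in $X$ valid uniformly on the compact $\Omega$.

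For step (a) I would apply a form of the preceding lemma with $M = Z_G(T)$, $\omega_{\mathfrak{g}} = \mathrm{supp}(f)$, and $\omega_M$ a compact neighborhood of the image of $X_0$ under $\pi_M$. The key structural input is that for any $\theta$-regular $X \in \mathfrak{t}'$, the full $G$-centralizer $Z_G(X)$ coincides with $Z_G(T) = M$: the $\theta$-regularity together with the maximality of $T$ forces $\mathfrak{z}_{\mathfrak{h}^{\perp}}(X) = \mathfrak{t}$, and a parallel argument gives $\mathfrak{z}_{\mathfrak{h}}(X) = \mathrm{Lie}(Z_H(T))$, so $\mathfrak{z}_{\mathfrak{g}}(X) = \mathrm{Lie}(M)$. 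This identification of centralizers, rather than the literal $M$-regularity of $X_0$ as an element of $\mathfrak{m}$, is what underlies the compactness conclusion: it makes the orbit map $Z_G(T) \backslash G \to \mathrm{Ad}(G) X_0$ a closed embedding, whence the set $\{g \in Z_G(T) \backslash G : g^{-1}X_0 g \in \mathrm{supp}(f)\}$ is compact, with the desired uniform extension to a neighborhood $\omega$ of $X_0$.

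For step (b), the function $f$ is locally constant on $\mathfrak{g}$ and $u$ does not depend on $X$, so for each fixed $g$ the map $X \mapsto f(g^{-1}Xg) u(g)$ is locally constant in $X$ by continuity of conjugation. Exploiting the basis of compact open neighborhoods in the $p$-adic topology, a finite-cover argument on $\Omega$ produces a single neighborhood $\omega' \subset \omega$ of $X_0$ on which $f(g^{-1}Xg) = f(g^{-1}X_0 g)$ holds for every $g \in \Omega$ simultaneously. Combining (a) and (b) then yields
\[ O_X(f) = \int_\Omega f(g^{-1}Xg)\, u(g)\, dg = \int_\Omega f(g^{-1}X_0 g)\, u(g)\, dg = O_{X_0}(f) \]
for all $X \in \omega'$.

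The main obstacle is the delicate application of the compactness lemma in step (a): strictly speaking, because $X_0$ is central in $\mathfrak{m}$, the image $\pi_M(X_0)$ lies outside the étale locus $\mathbb{A}_M'$ whenever $M_{\mathrm{der}}$ is nontrivial, so the lemma as stated does not immediately apply. The fix is to invoke the slightly strengthened form of Harish-Chandra's lemma which only requires $Z_G(X_0) = M$ (which we do have), or, equivalently, to argue directly from the closedness of the semisimple orbit $\mathrm{Ad}(G) X_0$ combined with a semicontinuity argument to extend the compact support bound from $X_0$ to a neighborhood.
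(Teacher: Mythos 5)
Your proposal is correct and follows essentially the same route as the paper: apply Harish-Chandra's compactness lemma with $M = Z_G(T)$, $\omega_{\mathfrak{g}} = \mathrm{supp}(f)$, and a compact neighborhood of $\pi_M(X_0)$, so that the integral becomes an integral over a fixed compact subset of $Z_G(T)\backslash G$ for all $X$ near $X_0$, and then conclude by uniform local constancy of $(X,g) \mapsto f(g^{-1}Xg)\,u(g)$ on that compact set. The obstacle you flag in step (a) dissolves once $\mathbb{A}_M'$ is read (as it must be for the lemma to be true at all) as the \'{e}tale locus of the finite map $\mathbb{A}_M \rightarrow \mathbb{A}_G$, i.e.\ the locus where $D^G_M \neq 0$: since $\theta$-regularity of $X_0 \in \mathfrak{t}'$ is exactly the nonvanishing of $D^G_{Z_G(T)}(X_0)$ (equivalently $Z_G(X_0) = Z_G(T)$, the condition you isolate), the point $\pi_M(X_0)$ lies in $\mathbb{A}_M'$ and the lemma applies verbatim, with the uniformity over a neighborhood of $X_0$ being precisely what the lemma, rather than a separate semicontinuity argument, supplies.
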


\begin{proof}
Let $\omega_M$ be a compact open neighborhood of $\pi_{Z_G(T)}(X)$, where $X \in \mathfrak{t}'$.     In Harish-Chandra's lemma, take $\omega_{\mathfrak{g}} = \text{supp\,}(f)$ and $M = Z_G(T)$, where $M$ is not necessarily a torus.  Then
\begin{align*}
	\mathcal{X} := \{ g \in Z_G(T) \backslash G : g^{-1}Xg \in \text{supp\,}(f) \text{ for some } X \in \pi_M^{-1}(\omega_M) \}&
\end{align*}
has compact closure.    We can therefore write the orbital integral as an integral over a compact set:
\begin{align*}
	\int_{Z_G(T) \backslash G} f(g^{-1}Xg) \, u(g)\; dg &= \int_{\bar{\mathcal{X}}} f(g^{-1}Xg) \, u(g)\; dg.
\end{align*}
By compactness, the right hand integral is locally constant as a function of $g$ and $X$, so the orbital integral is too.
\end{proof}

\begin{coro}
For any maximal $\theta$-split torus $T$ with Lie algebra $\mathfrak{t}$, we can represent $\tilde{\Gamma}_i$ by a function that restricts to a locally constant function on $\mathfrak{t}'$.
\end{coro}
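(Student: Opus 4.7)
The plan is to apply the preceding proposition with the constant weight $u \equiv 1$, once the centralizer $Z_G(X)$ is identified with $Z_G(T)$ for every $X \in \mathfrak{t}'$. The orbital integral
\[ F_i(X) := \int_{Z_G(X) \backslash G} f_i(g^{-1}Xg) \, dg \]
that defines the germ $\tilde{\Gamma}_i$ will then serve directly as the representative, and the content of the corollary is precisely that $F_i|_{\mathfrak{t}'}$ is locally constant.

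The first step is to verify that $Z_G(X) = Z_G(T)$ for every $X \in \mathfrak{t}'$. The inclusion $Z_G(T) \subseteq Z_G(X)$ is automatic because $X \in \mathfrak{t}$ and $Z_G(T)$ acts trivially on $\mathfrak{t}$. For the reverse inclusion, I would use the root space decomposition $\mathfrak{g} = \mathfrak{z}_{\mathfrak{g}}(\mathfrak{t}) \oplus \bigoplus_{\alpha \in \Phi(\mathfrak{t}, G)} \mathfrak{g}_{\alpha}$ already assembled in the Weyl integration formula proof, yielding $\mathfrak{g}_X = \mathfrak{z}_{\mathfrak{g}}(\mathfrak{t}) \oplus \bigoplus_{\alpha(X) = 0} \mathfrak{g}_{\alpha}$. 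Intersecting with $\mathfrak{h}^{\perp}$ and using that $\mathfrak{h}^{\perp} \cap \mathfrak{z}_{\mathfrak{g}}(\mathfrak{t}) = \mathfrak{t}$ (a consequence of $[Z_G(T), Z_G(T)] \subseteq H$ from Lemma \ref{centralizer_lemma}, together with the maximality of $T$ forcing $\tilde{T}^{-} = T$) and that the $-1$-eigenspace of $\theta$ on each $\mathfrak{g}_{\alpha} \oplus \mathfrak{g}_{-\alpha}$ is nonzero (since $\theta$ exchanges the two summands), one sees that $\mathfrak{h}^{\perp}_X$ strictly exceeds $\mathfrak{t}$ unless $\alpha(X) \neq 0$ for every $\alpha \in \Phi(\mathfrak{t}, G)$. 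The hypothesis $X \in \mathfrak{t}'$ forces this condition, so $\mathfrak{g}_X = \mathfrak{z}_{\mathfrak{g}}(\mathfrak{t})$. Since $X$ is semisimple and $G$ is connected reductive in characteristic zero, $G_X$ is connected, so $Z_G(X) = Z_G(T)$.

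The second step is a direct appeal to the preceding proposition with $f = f_i$ and $u \equiv 1$, which is trivially locally constant and left $Z_G(T)$-invariant. This gives local constancy of $X \mapsto \int_{Z_G(T) \backslash G} f_i(g^{-1}Xg) \, dg$ on $\mathfrak{t}'$. Combining with the first step, this function agrees with $F_i$ on $\mathfrak{t}'$, so $F_i$, viewed as a function on $\mathfrak{h}^{\perp}_{\theta\text{-}reg}$, is the required representative of $\tilde{\Gamma}_i$. The main obstacle is the identification $Z_G(X) = Z_G(T)$ on $\mathfrak{t}'$; once this is in place, everything else reduces to the preceding proposition. That identification in turn is a bookkeeping exercise using the root space analysis already carried out for the Jacobian computation and the structural fact that the derived group of $Z_G(T)$ is contained in $H$.
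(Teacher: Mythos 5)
Your proposal is correct and follows exactly the route the paper intends: the corollary is stated without separate proof because it is the preceding proposition applied with $f = f_i$ and the trivial weight $u \equiv 1$. Your additional verification that $Z_G(X) = Z_G(T)$ for $X \in \mathfrak{t}'$ (via the restricted root space decomposition, $[Z_G(T), Z_G(T)] \subset H$, and connectedness of centralizers of semisimple elements in characteristic zero) is a correct and welcome filling-in of the step the paper leaves implicit when it replaces $Z_G(X) \backslash G$ by $Z_G(T) \backslash G$.
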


Near the origin of  $\mathfrak{g}$, orbital integrals may be expressed as sums of nilpotent orbital integrals.  This is the Shalika germ expansion.

\begin{prop}
Let $T$ be a maximal $\theta$-split torus with Lie algebra $\mathfrak{t}$.   For each $f \in C_c^{\infty}(\mathfrak{g})$, there is a closed and open neighborhood $U$of $0 \in \mathfrak{g}$ such that
\begin{align*}
	O_X(f) &= \sum_{i=1}^r \mu_i(f) \cdot \tilde{\Gamma}_i(X)
\end{align*}
for all $X \in U \cap \mathfrak{t}'$.
\end{prop}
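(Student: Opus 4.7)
The plan is to reduce the proposition to a vanishing statement and then invoke a Harish-Chandra/Shalika descent argument adapted to the $\theta$-regular setting. First I would set
\[ f' := f - \sum_{i=1}^r \mu_i(f)\, f_i, \]
which by the normalization $\mu_j(f_i)=\delta_{ij}$ satisfies $\mu_j(f')=0$ for every nilpotent orbit $\mathcal{O}_j$. Since $\tilde{\Gamma}_i(X)$ is by definition the germ at the origin of $X\mapsto O_X(f_i)$, the proposition reduces to producing a closed-and-open neighborhood $U$ of $0\in\mathfrak{g}$ on which $O_X(f')=0$ for every $X\in U\cap\mathfrak{t}'$.

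Next I would apply Harish-Chandra's lemma with $M = Z_G(T)$, $\omega_{\mathfrak{g}} = \text{supp}(f')$, and $\omega_M$ a compact neighborhood of $0\in\mathbb{A}_M$. This produces a compact subset $\mathcal{X}\subset Z_G(T)\backslash G$ outside of which the integrand of $O_X(f')$ vanishes, simultaneously for every $X\in\mathfrak{t}'\cap\pi_M^{-1}(\omega_M)$. Combined with the local constancy result of the preceding proposition, this expresses $O_X(f')$ as a proper integral over $\mathcal{X}$ that is a locally constant function of $X$ on a suitable neighborhood of $0$ in $\mathfrak{t}'$.

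The heart of the argument is then the classical descent. The analysis of the exact sequence given just before the definition of the Shalika germs shows that the invariant distributions supported in the nilpotent cone are spanned by $\mu_1,\dots,\mu_r$, while our construction forces all of these to vanish on $f'$. I would then implement a homogeneity/scaling argument $X\mapsto t^2 X$ paired with the corresponding rescaling of $f'$, which identifies the limiting behavior of $O_X(f')$ as $X\to 0$ along $\mathfrak{t}'$ with a linear combination of nilpotent orbital integrals of scaled test functions, all of which are zero. Together with local constancy, this forces $O_X(f')$ to vanish identically on $U\cap\mathfrak{t}'$ for an appropriate $U$.

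The main obstacle will be adapting the classical descent, which is ordinarily phrased for elements regular semisimple in $\mathfrak{g}$, to the $\theta$-regular setting, where the centralizer $Z_G(X)=Z_G(T)$ can be strictly larger than a torus. Fortunately Harish-Chandra's lemma has already been stated for $M$ an arbitrary centralizer of a torus, so the compactness input goes through with $M=Z_G(T)$ playing the role ordinarily played by a Cartan subgroup. A subsidiary care is needed to track any weight factor $u(g)$ inherited from the preceding proposition: since $u$ is left $Z_G(T)$-invariant and locally constant it is bounded on the compact domain $\mathcal{X}$ and may be absorbed as a harmless cutoff, so the vanishing conclusion is unaffected.
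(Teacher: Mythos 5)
Your reduction to showing $O_X(f')=0$ near $0$, with $f'=f-\sum_i\mu_i(f)f_i$, matches the paper's first step, but both mechanisms you propose for the vanishing break down. First, Harish-Chandra's compactness lemma cannot be applied with $\omega_M$ a compact neighborhood of $0\in\mathbb{A}_M$: the lemma requires $\omega_M\subset\mathbb{A}_M'$, i.e. bounded away from the locus where $D^G_{Z_G(T)}$ vanishes, and no neighborhood of $0$ in $\mathbb{A}_M$ satisfies this unless $Z_G(T)=G$. Indeed, if such uniform compactness held over a neighborhood of $0$, the integral $O_X(f')$ would be given by integration over one fixed compact set for all small $X$ and would then be constant in $X$ near $0$, which is false in general; the failure of compactness over the null cone is precisely the difficulty the germ expansion is designed to address. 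So you get neither a uniform integration domain nor uniform local constancy as $X\to 0$; the preceding proposition only gives local constancy on $\mathfrak{t}'$, whose ``radius'' may shrink as $X$ approaches the singular set.

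Second, the scaling step is both circular and insufficient. The homogeneity $\tilde{\Gamma}_i(\alpha^2X)=|\alpha|^{-d_i}\tilde{\Gamma}_i(X)$ is deduced in the paper \emph{from} the germ expansion, not before it, and the rescaled functions $Y\mapsto f'(\alpha^2Y)$ do not converge in $C_c^{\infty}(\mathfrak{g})$ as the scale degenerates, so there is no legitimate identification of the limiting behavior of $O_X(f')$ along $\mathfrak{t}'$ with nilpotent orbital integrals of $f'$. Moreover, even granting $\lim_{X\to 0}O_X(f')=0$, local constancy on $\mathfrak{t}'$ would not force vanishing on a deleted neighborhood of $0$ (compare $x\mapsto|x|$ on $F^{\times}$, locally constant, tending to $0$, nowhere zero). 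What actually produces the uniform closed-and-open neighborhood $U$ in the paper is a soft localization argument: since the nilpotent orbital integrals of $f'$ vanish and these span $\mathcal{D}(\mathcal{N})^G$, the image of $f'$ in the coinvariants $C_c^{\infty}(\mathcal{N})_G$ is zero; the isomorphism $\varinjlim_{0\in V}C_c^{\infty}(\pi_G^{-1}V)_G\cong C_c^{\infty}(\mathcal{N})_G$ of Bernstein--Zelevinsky type then yields a compact open $V\subset\mathbb{A}_G$ such that $f'$ is already zero in $C_c^{\infty}(\pi_G^{-1}V)_G$, so every $G$-invariant distribution on $U:=\pi_G^{-1}(V)$, in particular $O_X$ for $X\in U\cap\mathfrak{t}'$, annihilates $f'$. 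Some argument of this kind, producing the neighborhood uniformly rather than pointwise, is the missing ingredient in your sketch.
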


\begin{proof}
For any $f \in C_c^{\infty}(\mathfrak{g})$, the function
\begin{align*}
	\phi &= f - \sum \mu_i(f) \cdot f_i
\end{align*}
has vanishing nilpotent orbital integrals, and so every element in $\mathcal{D}(\mathcal{N})^G$ maps $\phi$ to 0.    Since $\mathcal{D}(\mathcal{N})^G$ is the dual space of $C_c^{\infty}(\mathcal{N})_G$, the space of coinvariants for $G$,  $\phi$ also maps to $0$ under the projection $C_c^{\infty}(\mathcal{N}) \twoheadrightarrow C_c^{\infty}(\mathcal{N})_G$.   Now, as in \cite{BZ76},
\[  \varinjlim_{0 \in V} C_c^{\infty}( \pi_G^{-1}V )_G \cong C_c^{\infty}(\mathcal{N})_G \]
where the colimit runs over the compact open neighborhoods $V$ of 0 in $\mathbb{A}_G$.   So if $\phi$ maps to $0$ in $C_c^{\infty}(\mathcal{N})_G$, then there must be a $V$ such that $\phi$ maps to $0$ in $C_c^{\infty}( \pi_G^{-1}V )_G$.   Every distribution in $\mathcal{D}(\pi_G^{-1}V)^G$ therefore takes $\phi$ to 0.   If we write $U = \pi^{-1}_GV$, then $O_X(\phi) = 0 \text{ for all } X \in U \cap \mathfrak{t}'$ or
\begin{align*}
	O_X(f) &= \sum_{i = 1}^r \mu_i(f) \cdot \tilde{\Gamma}_i(X)
\end{align*}
as required. 
\end{proof}

An important property of these germs is that they satisfy a partial homogeneity relation that holds for squares in the field $F$.

\begin{prop}
Let $d_i = \dim \mathcal{O}_i$.    Then
\begin{align}
	\tilde{\Gamma}_i (\alpha^2X) &= | \alpha |^{-d_i} \tilde{\Gamma_i} (X)
\end{align}
for all $\alpha \in F$.
\end{prop}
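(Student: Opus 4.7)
The plan is to reduce the homogeneity of the Shalika germ $\tilde{\Gamma}_i$ to the analogous homogeneity of the nilpotent orbital integrals $\mu_j$, via the Shalika germ expansion applied to a rescaled test function.  Fix $\alpha \in F^{\times}$, set $t = \alpha^2$, and for $f \in C_c^{\infty}(\mathfrak{g})$ define $f^t \in C_c^{\infty}(\mathfrak{g})$ by $f^t(X) := f(tX)$.  Since $t$ is a scalar, $Z_G(tX) = Z_G(X)$, and a direct unpacking of the definition of $\tilde{\Gamma}_i$ gives, for $X \in \mathfrak{t}'$ near $0$,
\[
\tilde{\Gamma}_i(\alpha^2 X) = \int_{Z_G(X) \backslash G} f_i\bigl(g^{-1}(\alpha^2 X)g\bigr)\, \dot{dg} = \int_{Z_G(X) \backslash G} f_i^t\bigl(g^{-1}Xg\bigr)\, \dot{dg} = O_X(f_i^t).
\]
The Shalika germ expansion applied to $f_i^t$ then expresses this same orbital integral as $\sum_j \mu_j(f_i^t)\,\tilde{\Gamma}_j(X)$ on some neighbourhood of $0$ in $\mathfrak{t}'$, so the proof reduces to computing $\mu_j(f^t)$.

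The key step is the homogeneity
\[
\mu_j(f^t) = |t|^{-d_j/2}\, \mu_j(f) \qquad \text{for } t \in (F^{\times})^2.
\]
This rests on two facts.  First, the dilation $\sigma_t \colon X \mapsto tX$ is a self-map of each $F$-rational nilpotent orbit $\mathcal{O}_j$: given $X_0 \in \mathcal{O}_j$, Jacobson--Morozov yields an $\mathfrak{sl}_2$-triple $(X_0, H_0, Y_0)$ over $F$ together with the $F$-rational cocharacter $\lambda \colon \mathbb{G}_m \to G$ attached to $H_0$, for which $\operatorname{Ad}(\lambda(\alpha))X_0 = \alpha^2 X_0$; conjugating by an arbitrary element of $G$ then shows $tX \in \mathcal{O}_j$ whenever $X \in \mathcal{O}_j$.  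Second, the canonical $G$-invariant measure $d\mu_j$ on $\mathcal{O}_j$---taken as the Liouville measure attached to the Kirillov--Kostant--Souriau symplectic form $\omega$ on the coadjoint orbit (transported to $\mathcal{O}_j$ via an invariant form on $\mathfrak{g}$)---satisfies $\sigma_t^* d\mu_j = |t|^{d_j/2}\,d\mu_j$, because the KKS form scales linearly, $\sigma_t^*\omega = t\omega$, giving $\sigma_t^*\omega^{d_j/2} = t^{d_j/2}\omega^{d_j/2}$.  The substitution $Y = tX$ in the defining integral then produces the desired scaling factor.

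With this in hand, specialising to $f = f_i$ and using $\mu_j(f_i) = \delta_{ij}$ yields $\mu_j(f_i^t) = |t|^{-d_j/2}\delta_{ij}$, so the germ expansion collapses to
\[
O_X(f_i^t) = |t|^{-d_i/2}\,\tilde{\Gamma}_i(X) = |\alpha|^{-d_i}\,\tilde{\Gamma}_i(X),
\]
and comparison with the identity $\tilde{\Gamma}_i(\alpha^2 X) = O_X(f_i^t)$ established above completes the proof as an identity of germs at the origin.  The principal obstacle is the measure-scaling calculation: one must verify that the implicit normalisation of $d\mu_j$ forced by $\mu_j(f_i) = \delta_{ij}$ is proportional to the KKS normalisation on each $G_F$-orbit, so that the exponent $d_j/2$ is indeed the correct one---this is essentially Harish-Chandra's classical homogeneity theorem for nilpotent orbital integrals, cf. \cite{HC70}.
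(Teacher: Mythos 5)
Your argument is correct and follows essentially the same route as the paper: rescale the test function to $(f_i)_{\alpha^2}$, identify $\tilde{\Gamma}_i(\alpha^2 X)$ with $O_X((f_i)_{\alpha^2})$, apply the Shalika germ expansion, invoke the homogeneity $\mu_j(f_{\alpha^2}) = |\alpha|^{-d_j}\mu_j(f)$ of the nilpotent orbital integrals, and collapse the sum using $\mu_j(f_i) = \delta_{ij}$; the paper simply cites Harish-Chandra for that homogeneity where you sketch its proof. Your closing worry about normalisation is not an issue, since a $G$-invariant measure on $\mathcal{O}_j$ is unique up to a positive scalar, so the scaling exponent $d_j/2$ is independent of how $\mu_j$ is normalised.
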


\begin{proof}
Set $f_{\alpha^2} (X) = f(\alpha^2 X)$.  Because $\mu_i ( f_{\alpha^2} ) = | \alpha |^{-d_i} \mu_i (f)$ (see eg. \cite{HC70}), we can verify this identity directly on a small neighborhood of $0 \in \mathfrak{g}$.  Explicitly,
\begin{align*}
	\Gamma_i(\alpha^2 X) &= O_X( (f_i)_{\alpha^2} ) = \sum_j \mu_j ((f_i)_{\alpha^2}) \Gamma_j(X) = \sum_j |\alpha|^{-d_j} \mu_j(f_i) \Gamma_j(X) = | \alpha |^{-d_i} \Gamma_i(X)
\end{align*}
as required.
\end{proof}

We use this last lemma to define canonical representatives for these Shalika germs.    Choose a function $\Gamma_i'$ whose germ at the origin is $\tilde{\Gamma}_i$, as well as a lattice $L$ containing $0$ on which the homogeneity relation (2) holds for $\Gamma_i'$.   For any $X \in \mathfrak{h}^{\perp}$, one can choose a square $c^2$ in $F$ such that $c^2X \in L$ and define 
\begin{align*}
	\Gamma_i(X) &:= | c |^{d_i} \Gamma'_i(c^2 X).
\end{align*}
We will usually refer to these canonical representatives as Shalika germs as well.

Orbital integrals also have Shalika germ expansions near arbitrary semisimple elements.  We will derive these from the Shalika germ expansions near the origin of the Lie algebra $\mathfrak{l}$ of reductive subgroups $L$ in $G$ that arise as centralizers of semisimple elements of $\mathfrak{g}$.  Write $\tilde{\Gamma}_{\mathcal{O}}^L$ for the Shalika germ on $\mathfrak{l}$ that corresponds to a given nilpotent $L$-orbit $\mathcal{O}$ in $\mathfrak{l}$.   Let $\Gamma_{\mathcal{O}}^L$ be the corresponding canonical representative, which is defined on $\mathfrak{l}$.  With this notation, for example, $\Gamma_i^G = \Gamma_i$.    Write $Z$ for the center of $G$ and $\mathfrak{z}$ for its Lie algebra.   To derive these generalized expansions, we will first relate $\Gamma_i^G$ to $\Gamma_i^{[G,G]}$.

\begin{lemma}
The Shalika germs of a reductive group $G$ can be expressed in terms of the Shalika germs of its derived group:
\begin{align*}
	\Gamma_i^G(X+Z) &= \sum_{[G,G]-\text{orbits } \mathcal{O} \subset \mathcal{O}_i} \Gamma_{\mathcal{O}}^{[G,G]} (X).
\end{align*}
where $X \in [\mathfrak{g}, \mathfrak{g}]$ and $Z \in \mathfrak{z}$.
\end{lemma}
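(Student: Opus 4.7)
The plan is to compare the Shalika germ expansions of a common test function on $G$ and on $[G,G]$, using the fact that translation by the central element $Z$ commutes with conjugation to mediate between the two.

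First I would observe that since $Z \in \mathfrak{z}$ is central, conjugation by $G$ commutes with translation by $Z$: for $X \in [\mathfrak{g},\mathfrak{g}]$ and $g \in G$, one has $g^{-1}(X+Z)g = g^{-1}Xg + Z$ and $Z_G(X+Z) = Z_G(X)$. Hence for any $f \in C_c^\infty(\mathfrak{g})$, defining $f_Z(Y) := f(Y+Z)$, we obtain $O_{X+Z}^G(f) = O_X^G(f_Z)$. Using $G = Z \cdot [G,G]$ (modulo a finite cokernel absorbed into measure normalizations) together with $Z_G(X) = Z \cdot Z_{[G,G]}(X)$ for $X$ regular semisimple in $[\mathfrak{g},\mathfrak{g}]$, one then identifies
\[ O_X^G(f_Z) = O_X^{[G,G]}(\phi), \qquad \phi := f_Z|_{[\mathfrak{g},\mathfrak{g}]} \in C_c^\infty([\mathfrak{g},\mathfrak{g}]). \]

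Next I would apply the Shalika germ expansion on both sides. For $X$ small and $\theta$-regular in $[\mathfrak{g},\mathfrak{g}]$, the expansions at the origin give
\[ O_X^G(f_Z) = \sum_i \mu_i^G(f_Z)\, \Gamma_i^G(X), \qquad O_X^{[G,G]}(\phi) = \sum_{\mathcal O} \mu_{\mathcal O}^{[G,G]}(\phi)\, \Gamma_{\mathcal O}^{[G,G]}(X). \]
Since the nilpotent cone $\mathcal N$ lies entirely in $[\mathfrak{g},\mathfrak{g}]$, the values $f_Z|_{\mathcal N}$ and $\phi|_{\mathcal N}$ agree; and each nilpotent $G$-orbit decomposes as a disjoint union of $[G,G]$-orbits, $\mathcal O_i = \bigsqcup_{\mathcal O \subset \mathcal O_i} \mathcal O$. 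With compatible measure normalizations, the restriction of $\mu_i^G$ to each suborbit $\mathcal O \subset \mathcal O_i$ coincides with $\mu_{\mathcal O}^{[G,G]}$, so $\mu_i^G(f_Z) = \sum_{\mathcal O \subset \mathcal O_i} \mu_{\mathcal O}^{[G,G]}(\phi)$. Substituting this into the $G$-expansion, equating with the $[G,G]$-expansion, and varying $\phi$ over a dual basis of test functions (i.e. functions realizing linearly independent values of the $\mu_{\mathcal O}^{[G,G]}$) then forces the coefficient identity
\[ \Gamma_i^G(X+Z) = \sum_{\mathcal O \subset \mathcal O_i} \Gamma_{\mathcal O}^{[G,G]}(X) \]
for $X, Z$ in a neighborhood of the origin. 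The extension to arbitrary $X \in [\mathfrak{g},\mathfrak{g}]$ and $Z \in \mathfrak{z}$ then follows from the homogeneity relation defining the canonical representatives, since both sides transform identically under $(X,Z) \mapsto (\alpha^2 X, \alpha^2 Z)$ with the same exponent $d_i$ (the common dimension of all $\mathcal O \subset \mathcal O_i$).

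The main technical obstacle will be arranging the measure normalizations so that these identifications are exact rather than up to undetermined scalars. This requires fixing Haar measures on $G$, $[G,G]$, and the various centralizers so that the reduction $O_X^G(f_Z) = O_X^{[G,G]}(\phi)$ holds without stray constants (accounting for the finite cokernel $G_F / (Z_F \cdot [G,G]_F)$ arising from Galois cohomology), and fixing the invariant measures on each $\mathcal O_i$ and on each $\mathcal O \subset \mathcal O_i$ so that $\mu_i^G|_{\mathcal O} = \mu_{\mathcal O}^{[G,G]}$ exactly. The one-dimensionality of $[G,G]$-invariant measures on a homogeneous space makes these pinned down by a choice of base point and a Jacobian comparison, but the bookkeeping across the central factor is the delicate step.
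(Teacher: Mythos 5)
There is a genuine gap at the very first reduction, and it is located exactly where the sum over $[G,G]$-orbits in the lemma is supposed to come from. You claim that for $X$ regular in $[\mathfrak{g},\mathfrak{g}]$ one has $O_X^G(f_Z) = O_X^{[G,G]}(\phi)$ with the finite cokernel of $Z_F\cdot[G,G]_F$ in $G_F$ ``absorbed into measure normalizations.'' That cokernel does not act by rescaling: writing $D$ for a set of representatives of $\left(Z_G(X)_F[G,G]_F\right)\backslash G_F$, the integral over $Z_G(X)_F\backslash G_F$ breaks into pieces indexed by $D$, and each piece is a $[G,G]_F$-orbital integral taken at the twisted point $xXx^{-1}$, so that
\begin{align*}
O_X^G(f_Z) \;=\; \sum_{x\in D} O^{[G,G]}_{xXx^{-1}}(\phi),
\end{align*}
and these summands are orbital integrals over genuinely different $[G,G]_F$-orbits inside the single $G_F$-orbit of $X$ (think of $GL_2\supset SL_2$, where $GL_2(F)$-conjugacy of elements of $\mathfrak{sl}_2$ is strictly coarser than $SL_2(F)$-conjugacy, and likewise the regular nilpotent $GL_2$-orbit splits into several $SL_2$-orbits). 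If you run your dual-basis argument with the single-term identity as stated, equating $\sum_i\bigl(\sum_{\mathcal{O}\subset\mathcal{O}_i}\mu^{[G,G]}_{\mathcal{O}}(\phi)\bigr)\Gamma_i^G$ with $\sum_{\mathcal{O}}\mu^{[G,G]}_{\mathcal{O}}(\phi)\Gamma^{[G,G]}_{\mathcal{O}}$ and choosing $\phi$ dual to one fixed suborbit forces $\Gamma_i^G = \Gamma^{[G,G]}_{\mathcal{O}}$ for each individual $\mathcal{O}\subset\mathcal{O}_i$, which contradicts the sum formula whenever $\mathcal{O}_i$ splits. The correct mechanism is the sum over $D$ above together with the equivariance $\Gamma^{[G,G]}_{\mathcal{O}}(xXx^{-1}) = \Gamma^{[G,G]}_{x^{-1}\mathcal{O}x}(X)$; it is this twisting that assembles the germs of all the $[G,G]$-suborbits of $\mathcal{O}_i$ into the $G$-germ.

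A secondary point: your displayed expansions only ever produce $\Gamma_i^G(X)$, never $\Gamma_i^G(X+Z)$, because expanding $O^G_{X+Z}(f)=\sum_i\mu_i^G(f)\Gamma_i^G(X+Z)$ involves the nilpotent integrals of $f$ while your right-hand side involves those of $f_Z=\phi$ on $\mathcal{N}$, and these differ. The paper's proof resolves both difficulties at once by constructing the test function directly: it fixes one $[G,G]$-orbit $\mathcal{O}\subset\mathcal{O}_i$, writes $\mathcal{O}_i=\bigcup_{x\in D}x^{-1}\mathcal{O}x$ for the finite group $D=\left([G,G]_FZ_F\right)\backslash G_F$, takes $f_x$ dual to each germ $\Gamma^{[G,G]}_{x^{-1}\mathcal{O}x}$, sets $f_i'=\sum_{x\in D}f_x$, and extends it to $f_i\in C_c^\infty(\mathfrak{g})$ invariant under translation by a small central lattice; then $\Gamma_i^G(X+Z)=O_{X+Z}(f_i)=O_X(f_i')=\sum_{x\in D}\Gamma^{[G,G]}_{x^{-1}\mathcal{O}x}(X)$ near the origin. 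Your outline can be repaired along these lines, but as written the key identity is false and the conclusion it would yield is the wrong one.
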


\begin{proof}
The normal subgroup $[G, G]_F Z_F$ of $G_F$ has finite index, so $D :=  \left( [G,G]_F Z_F \right) \backslash G_F$ is a finite group.  Each nilpotent $G$-orbit $\mathcal{O}_i$ can be written as a finite union of nilpotent $[G,G]$-orbits.  In fact, by fixing one of these nilpotent $[G,G]$-orbits, say $\mathcal{O}$, we can write $\mathcal{O}_i = \bigcup_{x \in D} x^{-1} \mathcal{O} x$.  For each $x \in D$, we can choose some $f_x \in C_c^{\infty}([\mathfrak{g}, \mathfrak{g}])$ such that $\Gamma_{x^{-1}\mathcal{O}x}^{[G,G]}(X)$ and $O_X(f_x)$ have equal germs at the origin.    Let $f'_x$ be the sum of the $f_x$ as $x$ varies through $D$: 
\[f'_i =  \sum_{x \in D} f_x.\]
Let $f_i \in C_c^{\infty}(\mathfrak{g})$ restrict to $f'_i \in C_c^{\infty}([\mathfrak{g}, \mathfrak{g}])$ and suppose that $f_i(X+Z) = f_i(X)$ for all $Z$ in some small lattice $L$ containing 0 in $\mathfrak{z}$.   Then for any $X + Z$ in a small neighbourhood of the origin, with $X \in [\mathfrak{g}, \mathfrak{g}]$ and $Z \in \mathfrak{z}$,
\begin{align*}
	\Gamma_i^G(X+Z) &= O_{X+Z}(f_i) = O_X(f_i') = \sum_{x \in D} O_X(f_x) = \sum_{x \in D} \Gamma_{x^{-1}\mathcal{O}x}^{[G,G]}(X)
\end{align*}
as required. 
\end{proof}

\begin{coro}
Shalika germs $\Gamma_i^G$ are translation invariant by elements of $\mathfrak{z} \cap \mathfrak{h}^{\perp}$.
\end{coro}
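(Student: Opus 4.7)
The plan is to deduce the corollary directly from the preceding lemma, which gives the formula
\[
\Gamma_i^G(X+Z) = \sum_{[G,G]\text{-orbits } \mathcal{O} \subset \mathcal{O}_i} \Gamma_{\mathcal{O}}^{[G,G]}(X)
\]
for $X \in [\mathfrak{g}, \mathfrak{g}]$ and $Z \in \mathfrak{z}$. The crucial observation is that the right-hand side depends only on the $[\mathfrak{g}, \mathfrak{g}]$-component $X$ and is completely independent of the central component $Z$.

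Given any $W \in \mathfrak{h}^{\perp}_{\theta\text{-reg}}$, I would use the direct sum decomposition $\mathfrak{g} = [\mathfrak{g}, \mathfrak{g}] \oplus \mathfrak{z}$ to write $W = X + Z$ uniquely with $X \in [\mathfrak{g}, \mathfrak{g}]$ and $Z \in \mathfrak{z}$. Then for any $Z' \in \mathfrak{z} \cap \mathfrak{h}^{\perp}$, the translated element $W + Z'$ still lies in $\mathfrak{h}^{\perp}$, and since $Z'$ is central its centralizer equals the centralizer of $W$, so $W + Z'$ remains $\theta$-regular. Writing $W + Z' = X + (Z + Z')$ with $Z + Z' \in \mathfrak{z}$ and applying the lemma on both sides gives
\[
\Gamma_i^G(W + Z') = \sum_{\mathcal{O}} \Gamma_{\mathcal{O}}^{[G,G]}(X) = \Gamma_i^G(W),
\]
which is the claimed translation invariance.

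There is really no substantive obstacle: the lemma does all of the work, and one only needs to verify the bookkeeping that shifting by an element of $\mathfrak{z} \cap \mathfrak{h}^{\perp}$ keeps us in the domain of the Shalika germ. The one subtlety worth flagging is that the canonical representatives $\Gamma_i^G$ were defined after choosing a lattice on which the homogeneity relation holds, so one should observe that the identity above, established for the germ at the origin, extends to the canonical representative by the homogeneity scaling (or simply by noting that both sides transform the same way under the rescaling $W \mapsto c^2 W$, $Z' \mapsto c^2 Z'$).
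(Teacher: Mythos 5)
Your proposal is correct and matches the paper's intent: the corollary is stated as an immediate consequence of the preceding lemma, whose right-hand side visibly does not depend on the central component, exactly as you argue. Your extra checks (that translation by $\mathfrak{z} \cap \mathfrak{h}^{\perp}$ preserves $\theta$-regularity, and that the identity passes from germs to the canonical representatives via homogeneity) are sensible bookkeeping but not a departure from the paper's route.
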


This corollary is already enough to define a Shalika germ expansion near central semisimple elements of $\mathfrak{g}$.  At arbitrary semisimple elements $S$, the Shalika germ expansion near $S$ in the centralizer $\mathfrak{g}_S$ provides a Shalika germ expansion near $S$ in $\mathfrak{g}$, according to the next proposition.

\begin{prop}
Let $S \in \mathfrak{h}^{\perp}$ be semisimple, and suppose that $T$ is a maximal $\theta$-split torus in $Z_G(S)$ with Lie algebra $\mathfrak{t}$. For any $f \in C_c^{\infty}(\mathfrak{g})$,
\begin{align*}
	O_X(f) &= \sum_{i} \mu_{S + Y_i}(f) \cdot \Gamma_i^{Z_G(S)}(X)
\end{align*}
for all $X \in U \cap \mathfrak{t}'$, where $U$ is an open neighborhood of $S$ in $\mathfrak{g}_S$ and $\{ Y_i \}$ is a complete set of representatives for the nilpotent orbits of $Z_G(S)$ in $\mathfrak{g}_S$.
\end{prop}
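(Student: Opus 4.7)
The plan is to reduce to the already-established Shalika germ expansion at the origin by performing Harish-Chandra's semisimple descent from $G$ to $L := Z_G(S)$. Since $T$ is a maximal $\theta$-split torus of $L$, its Lie algebra $\mathfrak{t}$ lies in $\mathfrak{l} := \mathfrak{g}_S$; moreover $S$ is central in $\mathfrak{l}$, i.e.\ $S \in \mathfrak{z}(\mathfrak{l}) \cap \mathfrak{h}^{\perp}$. For any $X \in \mathfrak{t}'$ close to $S$, I would write $X = S + X'$ with $X' = X - S$ a small element of $\mathfrak{t}$, and choose $U$ small enough that the corresponding $X'$ lies in the neighborhood of $0 \in \mathfrak{l}$ where the Shalika germ expansion on $L$ (the previous proposition, applied to $L$) is valid.

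The central technical step is Harish-Chandra's descent lemma in the following form: after shrinking $U$ if necessary, I would produce a function $f_S \in C_c^{\infty}(\mathfrak{l})$, essentially the partial integral of $f$ along $L \backslash G$, such that for every $X = S + X' \in U \cap \mathfrak{t}'$ one has
\[
O_X^G(f) \;=\; O_{X'}^L(f_S).
\]
To make this rigorous I would invoke Harish-Chandra's lemma (cited earlier in the excerpt) to ensure that the set of $g \in L \backslash G$ for which $g^{-1}(S + X')g$ meets $\mathrm{supp}(f)$ has compact closure, and I would use that $Z_G(X) \subseteq L$ for $X$ sufficiently close to $S$ and $\theta$-regular --- this rests on the semisimplicity of $S$ and the fact that the Jordan decomposition of $S + X'$ has semisimple part $S$ for $X'$ small. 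Integration by stages $Z_G(X) \backslash G = Z_L(X') \backslash L \times L \backslash G$ then yields the descent identity, with the weight $u$ (from the earlier definition of $O_X$) absorbed into $f_S$ since it is $Z_G(T) = Z_L(T)$-invariant.

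Next, I would apply the Shalika germ expansion at the origin of $\mathfrak{l}$ to $f_S$:
\[
O_{X'}^L(f_S) \;=\; \sum_i \mu_{Y_i}^L(f_S) \cdot \Gamma_i^L(X').
\]
Since $S \in \mathfrak{z}(\mathfrak{l}) \cap \mathfrak{h}^{\perp}$, the corollary on translation invariance of Shalika germs gives $\Gamma_i^L(X') = \Gamma_i^L(X' + S) = \Gamma_i^L(X)$. A second application of the descent identity, now at the commuting pair $(S, Y_i)$ (noting $Z_G(S + Y_i) = Z_L(Y_i)$ because $S$ is central in $\mathfrak{l}$), gives $\mu_{Y_i}^L(f_S) = \mu_{S + Y_i}^G(f)$. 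Substituting both identities yields exactly the claimed expansion.

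I expect the main obstacle to be the descent lemma: specifically, producing $f_S$ with controlled support via Harish-Chandra's lemma and verifying the inclusion $Z_G(X) \subseteq L$ for all $\theta$-regular $X$ near $S$, together with the careful matching of Haar measures so that the Jacobian of $\pi_G$ factoring through $\mathbb{A}_L$ does not introduce a stray factor $|D_L^G|^{1/2}$. Once the descent identity is established at both $X$ and the nilpotent elements $Y_i$, the remainder of the argument is a formal chain of substitutions using results already proved in the excerpt.
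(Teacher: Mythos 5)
Your proposal is correct and takes essentially the same route as the paper: semisimple descent to $Z_G(S)$, the Shalika germ expansion at the origin of $\mathfrak{g}_S$, and translation invariance of the germs $\Gamma_i^{Z_G(S)}$ under central elements. The only difference is bookkeeping---where you build $f_S$ explicitly via the compactness lemma and integration in stages and then match the nilpotent orbital integrals, the paper simply invokes Harish-Chandra's trick (Lemma 29 of \cite{HC70}) to produce a function on $\mathfrak{g}_S$ whose orbital integrals match those of $f$; one small correction to your flagged step is that for semisimple $X = S + X'$ the inclusion $Z_G(X) \subseteq Z_G(S)$ follows from a root-value (smallness) argument rather than from a Jordan decomposition, since $S + X'$ is itself semisimple.
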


\begin{proof}
Let $\phi \in C_c^{\infty}(\mathfrak{g})$.  The Shalika germ expansion at $0$ for $G = Z_G(S)$ and $\phi_S(X) = \phi(X + S)$ gives
\begin{align*}
	\int_{Z_G(X) \backslash Z_G(S)} \phi_S(g^{-1} X g) \; \dot{dg} &= \sum_i \mu_{Y_i}^{Z_G(S)} (\phi_S) \cdot \Gamma_i^{Z_G(S)} (X) 
\end{align*}
for all $X \in U' \cap \mathfrak{t}'$, where $U'$ is an open neighborhood of $S$ in $\mathfrak{g}_S$.  Because the Shalika germs $\Gamma_i^{Z_G(S)}$ are translation invariant by elements in the center of $Z_G(S)$,
\begin{align*}
	\int_{Z_G(S+X) \backslash Z_G(S)} \phi(g^{-1}(X+S)g) \; \dot{dg} &= \sum_i \mu_{S + Y_i}^{Z_G(S)} (\phi) \cdot \Gamma_i^{Z_G(S)} (X)
\end{align*}
for all $X + S \in U' \cap \mathfrak{t}'$.  The proposition follows from a trick due to Harish-Chandra that is implicit in the proof of lemma 29 in \cite{HC70}:  we can choose $\phi$ such that 
\[ 	\int_{Z_G(X) \backslash Z_G(S)} \phi(g^{-1}Xg) \; \dot{dg} = \int_{Z_G(X) \backslash G} f(g^{-1}Xg) \; \dot{dg} \]
for all $X$ over some compact neighborhood of the origin in $\mathbb{A}_H$.
\end{proof}

The Shalika germ expansions by themselves almost guarantee that orbital integrals near semisimple elements become locally integrable when multiplied by $|D^H(X)|^{\frac{1}{2}}$, except that certain germs might have degrees of homogeneity that are too large.   Fortunately, we can prove that the only terms that could possibly be problematic are identically zero.

\begin{definition}
Let $\mathcal{O}(X)$ denote the $G$-orbit of $X \in \mathfrak{g}$.   For any subset $S \subset \mathfrak{g}$, write $\mathcal{O}(S) := \bigcup_{X \in S} \mathcal{O}(X)$.
\end{definition}

\begin{prop}
Let $T$ be some maximal $\theta$-split torus with Lie algebra $\mathfrak{t}$.   Then
\begin{align*}
	\dim \mathcal{O}_i > \dim Z_G(T) \backslash G \implies \Gamma_i(X) = 0 \text{ for all } X \in \mathfrak{t}'.
\end{align*}
\end{prop}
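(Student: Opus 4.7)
My plan is to exploit a mismatch in scaling: the homogeneity relation (2) forces $\Gamma_i$ to transform by $|\alpha|^{-d_i}$ under $X \mapsto \alpha^2 X$, whereas a Harish-Chandra-type bound on orbital integrals along $\mathfrak{h}^{\perp}_{\theta-reg}$ will show that this blow-up rate is at most $|\alpha|^{-\dim Z_G(T)\backslash G}$. When $d_i$ exceeds $\dim Z_G(T)\backslash G$, the only way to reconcile these two growth rates is $\Gamma_i \equiv 0$ on $\mathfrak{t}'$.

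First I would record the dimensional input. For $X \in \mathfrak{t}'$, the definition of $\mathfrak{t}'$ (as the set where the leading coefficient in the characteristic polynomial does not vanish) says that $\alpha(X) \neq 0$ for every $\alpha \in \Phi(\mathfrak{t},G)$, whence $\mathfrak{g}_X = \mathfrak{g}_{\mathfrak{t}}$ and $\dim G \cdot X = \dim Z_G(T)\backslash G$. Fix a representative $f_i \in C_c^\infty(\mathfrak{g})$ with $\mu_j(f_i) = \delta_{ij}$ and a closed--open neighborhood $U$ of $0$ on which the Shalika germ expansion reads $\Gamma_i(Y) = O_Y(f_i)$ for $Y \in U \cap \mathfrak{t}'$. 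For an arbitrary $X \in \mathfrak{t}'$ and any $\alpha \in F^{\times}$ with $|\alpha|$ sufficiently small that $\alpha^2 X \in U$, the homogeneity relation (2) and the choice of canonical representative give
\[
\Gamma_i(X) \;=\; |\alpha|^{d_i}\,\Gamma_i(\alpha^2 X) \;=\; |\alpha|^{d_i}\,O_{\alpha^2 X}(f_i).
\]

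Next I would invoke the Harish-Chandra boundedness estimate
\[
|O_Y(f_i)| \;\leq\; C \cdot |D^G_{Z_G(T)}(Y)|^{-1/2}, \qquad Y \in \mathfrak{t}',
\]
which follows in the manner of \cite{HC70} from the compactness lemma above, using the identification $\mathfrak{g}_Y = \mathfrak{g}_{\mathfrak{t}}$ on $\mathfrak{t}'$. Since $D^G_{Z_G(T)}(Y) = \prod_{\alpha \in \Phi(\mathfrak{t},G)} \alpha(Y)^{m_\alpha}$ is a homogeneous polynomial of degree $\sum_{\alpha} m_\alpha = \dim G - \dim Z_G(T) = \dim Z_G(T)\backslash G$, rescaling yields
\[
|D^G_{Z_G(T)}(\alpha^2 X)|^{-1/2} \;=\; |\alpha|^{-\dim Z_G(T)\backslash G}\,|D^G_{Z_G(T)}(X)|^{-1/2}.
\]
Combining,
\[
|\Gamma_i(X)| \;\leq\; C\,|\alpha|^{\,d_i - \dim Z_G(T)\backslash G}\,|D^G_{Z_G(T)}(X)|^{-1/2}.
\]
The left side is independent of $\alpha$, while under the hypothesis $d_i > \dim Z_G(T)\backslash G$ the right side tends to $0$ as $|\alpha| \to 0$. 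This forces $\Gamma_i(X) = 0$, as required.

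The hard part will be verifying the Harish-Chandra bound in this $\theta$-regular (rather than $\mathfrak{g}$-regular) setting. The classical proof is written for elements regular semisimple in $\mathfrak{g}$, and here $X \in \mathfrak{t}'$ need not be such; however, the identification $\mathfrak{g}_X = \mathfrak{g}_\mathfrak{t}$ on $\mathfrak{t}'$ supplies exactly the structural input the classical argument requires, so the adaptation amounts to rereading Harish-Chandra's original compactness argument with $Z_G(T)$ playing the role of the Cartan subgroup. Everything else in the proof is essentially formal.
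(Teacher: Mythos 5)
There is a genuine gap, and it sits exactly where you flagged ``the hard part.'' Your argument needs the estimate $|O_Y(f_i)| \leq C\,|D^G_{Z_G(T)}(Y)|^{-1/2}$ with a constant $C$ that is \emph{uniform} as $Y \in \mathfrak{t}'$ approaches $0$, since you apply it at $Y = \alpha^2 X$ with $|\alpha| \to 0$. This uniform bound is precisely the local boundedness of the normalized orbital integral $|D^G_{Z_G(T)}(X)|^{1/2} O_X(f)$ near the origin, which in this paper is the lemma proved \emph{after} the present proposition --- and its proof uses the proposition: in the Shalika germ expansion at $0$, the terms with $\dim \mathcal{O}_i > \dim Z_G(T)\backslash G$ have negative degree of homogeneity after normalization by $|D|^{1/2}$, and it is exactly the vanishing statement you are trying to prove that discards them. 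So, as written, your plan is circular. Moreover, the claim that the bound ``follows from the compactness lemma above'' does not hold: Harish-Chandra's compactness lemma requires the compact set $\omega_M$ to lie in $\mathbb{A}_M'$, the \'etale locus of the adjoint quotient, and $0$ is not an \'etale point; the lemma therefore gives local constancy of $O_X(f)$ on $\mathfrak{t}'$ but no control on the rate of blow-up as $X$ approaches a singular element, which is the only thing at stake in your scaling argument. (Note also that in the classical case, where $T$ is a maximal torus of $G$, one always has $d_i \leq \dim T\backslash G$, so the classical literature never has to confront the regime $d_i > \dim Z_G(T)\backslash G$ that your inequality is meant to exploit.)

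The paper avoids this entirely by a support argument rather than an analytic bound: semicontinuity of $Y \mapsto \dim \mathcal{O}(Y)$ shows that $\overline{\mathcal{O}(\mathfrak{t}')}$ is disjoint from $\mathcal{O}_i$ whenever $\dim \mathcal{O}_i > \dim Z_G(T)\backslash G$; an inductive construction then shows the representing function $f_i$ (the one dual to the nilpotent orbital integrals) may be chosen with support in any open set meeting $\mathcal{O}_i$ but avoiding the smaller nilpotent orbits, in particular in one disjoint from $\mathcal{O}(\mathfrak{t}')$; for such a choice $O_X(f_i) \equiv 0$ for all $X \in \mathfrak{t}'$, so the germ, and hence its canonical representative $\Gamma_i$, vanishes on $\mathfrak{t}'$. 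If you want to salvage your scaling strategy, you would first have to prove the uniform $|D|^{-1/2}$ bound near $0$ by some independent route, which is a substantive theorem in this $\theta$-regular setting, not a rereading of the compactness lemma.
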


\begin{proof}
Suppose that $\dim \mathcal{O}_i > \dim Z_G(T) \backslash G$.   Let $\{ X_j : j \geq 0 \}$ be a convergent sequence in $\mathcal{O}(\mathfrak{t}')$ with limit $X$.     Then for each $j \geq 0$,
\begin{align*}
	\dim \mathcal{O}(X_j) = \dim Z_G(T) \backslash G < \dim \mathcal{O}_i.
\end{align*}
Since the function $Y \mapsto \dim \mathcal{O}(Y) = \text{rank\,} \text{ad\,} Y$ is upper semicontinuous, we see that $X \notin \mathcal{O}_i$.    Thus
\begin{align*}
	\overline{\mathcal{O}(\mathfrak{t}')} \cap \mathcal{O}_i = \emptyset.
\end{align*}
We claim that for any $Y \in \mathcal{O}_i$ and any open set $U$ containing $Y$ but disjoint from $\mathcal{O}_j$ with $j < i$, there is a function $f_i$ supported in $U$ such that $\Gamma_i(X)$ and $O_X(f_i)$ have the same germs at the origin.

We prove this by induction.   To begin, suppose that $i = r$.   Then $\mathcal{O}_r$ is relatively open in $\mathcal{O}_1 \cup \cdots \cup \mathcal{O}_r$, so that $\mathcal{O}_k \cap U$ is relatively open in $\mathcal{O}_1 \cup \cdots \cup \mathcal{O}_r$ and does not intersect $\mathcal{O}_1, \mathcal{O}_2, \cdots, \mathcal{O}_{r-1}$.   Take any nonnegative and nonzero real-valued function $\tilde{f}_r$ that is supported in $U$.   Then $\mu_j(\tilde{f}_r) = 0$ whenever $j < r$ and $\mu_r(\tilde{f}_r) > 0$.   The function
\begin{align*}
	f_r &= \frac{\tilde{f}_r}{\mu_r(\tilde{f}_r)}
\end{align*}
has the sought properties.

We now suppose that the claim is true for all $j > k$.  $\mathcal{O}_k$ is relatively open in $\mathcal{O}_1 \cup \cdots \cup \mathcal{O}_k$, so that $\mathcal{O}_r \cap U$ is relatively open in $\mathcal{O}_1 \cup \cdots \cup \mathcal{O}_k$ and does not intersect $\mathcal{O}_1, \mathcal{O}_2, \cdots, \mathcal{O}_{k-1}$.  Again, we take any nonnegative and nonzero real-valued function $\tilde{f}_k$ that is supported on $U$.   Set
\begin{align*}
	f_k &:= \frac{\tilde{f}_k}{\mu_k(\tilde{f}_k)} - \sum_{j > k} \frac{\mu_j(\tilde{f}_k)}{\mu_k(\tilde{f}_k)} f_j
\end{align*}
where each $f_j$ has been chosen with two properties:   first, $f_j$ is supported on an open set that does not intersect $\mathcal{O}_i$ for any $i < j$; second, the functions $\Gamma'_j(X)$ and $O_X(f_j)$ have the same germs at the origin.   These functions $f_j$ exist according to the induction hypothesis.   For $h < k$,
\begin{align*}
	\mu_h(f_k) &= \frac{\mu_h(\tilde{f}_k)}{\mu_k(\tilde{f}_k)} - \sum_{j > k} \frac{\mu_j(\tilde{f}_k)}{\mu_k(\tilde{f}_k)} \mu_h(f_j) = \frac{\mu_h(\tilde{f}_k)}{\mu_k(\tilde{f}_k)} = 0.
\end{align*}
For $h > k$,
\begin{align*}
	\mu_h(f_k) &= \frac{\mu_h(\tilde{f}_k)}{\mu_k(\tilde{f}_k)} - \sum_{j > k} \frac{\mu_j(\tilde{f}_k)}{\mu_k(\tilde{f}_k)} \mu_h(f_j) = \frac{\mu_h(\tilde{f}_k)}{\mu_k(\tilde{f}_k)} -  \frac{\mu_h(\tilde{f}_k)}{\mu_k(\tilde{f}_k)} = 0.
\end{align*}
For $h = k$,
\begin{align*}
	\mu_h(f_k) &= \frac{\mu_h(\tilde{f}_k)}{\mu_k(\tilde{f}_k)} - \sum_{j > k} \frac{\mu_j(\tilde{f}_k)}{\mu_k(\tilde{f}_k)} \mu_h(f_j) = \frac{\mu_k(\tilde{f}_k)}{\mu_k(\tilde{f}_k)} = 1.
\end{align*}
Therefore, this function has all the sought properties, and the claim follows inductively.

Now take any $Y \in \mathcal{O}_i$ that does not belong to the closure of $\mathcal{O}(\mathfrak{t}')$, ie. such that there is an open set $U$ containing $Y$ that does not intersect $\mathcal{O}(\mathfrak{t}')$.   We can choose $f_i$ with support in this set, and define $\tilde{\Gamma}_i(X)$ to be the germ of the function
\begin{align*}
	X \mapsto &\begin{cases}
				\int_{Z_G(X) \backslash G} f_i(g^{-1}Xg) \; \dot{dg} & X \in \mathfrak{h}^{\perp}_{\theta-reg} \\
				0 & X \notin \mathfrak{h}^{\perp}_{\theta-reg}
			\end{cases} .
\end{align*}
This function is identically 0 because we are integrating outside the support of $f_i$.   Thus $\tilde{\Gamma}_i(X) = 0$ and $\Gamma_i(X) = 0$.
\end{proof}

This last result allows us to give the following lemma in full generality.

\begin{lemma}
The function $\mathfrak{t} \rightarrow \mathbb{C}$ defined by extending
\begin{align*}
	X &\mapsto |D^G_{Z_G(T)}(X)|^{\frac{1}{2}} \int_{Z_G(T) \backslash G} f(g^{-1}Xg) \; \dot{dg}
\end{align*}
by 0 from $\mathfrak{t}_{\theta-reg}$ to $\mathfrak{t}$ is locally bounded (and therefore locally integrable) on each Cartan subalgebra $\mathfrak{t}$ of $\mathfrak{h}^{\perp}$.
\end{lemma}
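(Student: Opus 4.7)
The plan is to show local boundedness pointwise: the function is locally constant on $\mathfrak{t}_{\theta-reg}$ and the Jacobian is locally bounded there, so the only issue is boundedness in a neighborhood of each $\theta$-singular semisimple $S_0 \in \mathfrak{t}$. Since $\mathfrak{t} \subset \mathfrak{h}^{\perp}$, any $S_0 \in \mathfrak{t}$ is semisimple in $\mathfrak{g}$, and we can apply the Shalika germ expansion in the reductive subgroup $L := Z_G(S_0)$ just proven:
\[
O_X(f) = \sum_i \mu_{S_0+Y_i}(f) \cdot \Gamma_i^{L}(X-S_0)
\]
for $X$ in some neighborhood of $S_0$ in $\mathfrak{l}$, where $\{Y_i\}$ are representatives of the nilpotent $L$-orbits $\mathcal{O}_i \subset \mathfrak{l}$. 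Since $S_0 \in \mathfrak{t}$, we have $T \subset L$, and $\mathfrak{t}$ is still a Cartan subspace of $\mathfrak{l} \cap \mathfrak{h}^\perp$, so the previous proposition (applied inside $L$) gives $\Gamma_i^L(X-S_0) = 0$ on $\mathfrak{t}'$ whenever $\dim \mathcal{O}_i > \dim Z_L(T) \backslash L$.

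Next I would factor the Jacobian: the roots $\alpha \in \Phi(\mathfrak{t}, G)$ split according to whether $\alpha(S_0) = 0$ or not. On a small neighborhood of $S_0$, the factor coming from roots with $\alpha(S_0) \neq 0$ is continuous and bounded, while the remaining roots are exactly $\Phi(\mathfrak{t}, L)$, giving
\[
|D^G_{Z_G(T)}(X)|^{\frac{1}{2}} = c(X) \cdot |D^L_{Z_L(T)}(X-S_0)|^{\frac{1}{2}}
\]
with $c(X)$ bounded near $S_0$. Using $Z_L(\mathfrak{t}) = Z_G(T)$, one checks that $\sum_{\alpha \in \Phi(\mathfrak{t},L)} m_\alpha = \dim Z_L(T)\backslash L$, so under the scaling $Y \mapsto \alpha^2 Y$ the factor $|D^L_{Z_L(T)}(Y)|^{\frac{1}{2}}$ is multiplied by $|\alpha|^{\dim Z_L(T)\backslash L}$.

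Combining this with the Shalika germ homogeneity $\Gamma_i^L(\alpha^2 Y) = |\alpha|^{-\dim \mathcal{O}_i}\,\Gamma_i^L(Y)$, the product
\[
|D^L_{Z_L(T)}(Y)|^{\frac{1}{2}} \cdot \Gamma_i^L(Y)
\]
scales by $|\alpha|^{\dim Z_L(T)\backslash L - \dim \mathcal{O}_i}$, and the vanishing result restricts the nonzero $i$ to those with $\dim \mathcal{O}_i \leq \dim Z_L(T) \backslash L$, making the exponent non-negative. Since each Shalika germ is represented by a locally constant function on $\mathfrak{t}'$ (by the earlier corollary), picking a representative on a fundamental domain for the action $Y \mapsto \varpi^{2} Y$ and using this scaling bound shows each summand is bounded on a neighborhood of $0 \in \mathfrak{t}$. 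Summing over the finitely many $i$ and multiplying by the bounded factor $c(X)$ gives local boundedness on a neighborhood of $S_0$ in $\mathfrak{t}$.

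The main obstacle is the bookkeeping in step three: verifying that the exponent $\dim Z_L(T)\backslash L - \dim \mathcal{O}_i$ is genuinely non-negative for all nilpotent orbits that contribute, which requires the vanishing proposition applied inside $L$ rather than inside $G$, and checking that $\mathfrak{t}$ remains a maximal $\theta$-split torus within $L$ (so that the hypotheses of that proposition apply). Once this is in place, local integrability follows from local boundedness on the locally compact space $\mathfrak{t}$.
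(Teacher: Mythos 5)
Your overall strategy---expand in Shalika germs at the singular point $S_0$ in $L = Z_G(S_0)$, factor $|D^G_{Z_G(T)}|^{1/2}$ into a bounded factor times $|D^L_{Z_L(T)}|^{1/2}$, and use the homogeneity of the germs together with the vanishing proposition applied inside $L$ to see that each product $|D^L_{Z_L(T)}(Y)|^{1/2}\,\Gamma_i^L(Y)$ scales with non-negative exponent $\dim Z_L(T)\backslash L - \dim\mathcal{O}_i$---is exactly the paper's, and that part of the bookkeeping (including $Z_L(T)=Z_G(T)$ and the count $\sum_\alpha m_\alpha = \dim Z_L(T)\backslash L$) is fine. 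But there is a genuine gap in the final step. You conclude boundedness near $0$ by ``picking a representative on a fundamental domain for $Y \mapsto \varpi^2 Y$ and using this scaling bound,'' which tacitly assumes that $|D^L_{Z_L(T)}(Y)|^{1/2}\,\Gamma_i^L(Y)$ is bounded on that fundamental shell. It is not known to be: unless $S_0$ is only barely singular, the shell meets the root hyperplanes $\{\alpha(Y)=0\}$, $\alpha \in \Phi(\mathfrak{t},L)$, and near those points the germ representative is only locally constant on the $\theta$-regular set and may a priori blow up as one approaches the singular locus inside the shell. Boundedness of the normalized product near such a point is itself an instance of the very statement being proved, now for the strictly smaller centralizer $Z_G(X) \subsetneq L$; local constancy on $\mathfrak{t}_{\theta\text{-reg}}$ (your step one) gives no uniform bound there, since the regular part of the shell is not compact.

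The paper closes exactly this hole by organizing the argument as an induction on the centralizer: the scaling (non-negative degree) argument shows the term is locally bounded near $Y$ \emph{provided} it is locally bounded near all points whose centralizer is a proper subgroup of $Z_G(Y)$; one then descends until $Z_G(Y)$ is a torus, where the claim is immediate from local constancy. Your proposal can be repaired by the same device---induct on $\dim Z_G(S_0)$, use the inductive hypothesis to bound the summands near the less-singular points of the (compact) shell, and only then invoke the scaling relation to propagate the bound to a full neighborhood of $S_0$---but as written the inductive step is missing, and the claimed bound on the fundamental domain is unjustified. The only ``obstacle'' you flag (non-negativity of the exponent via the vanishing proposition in $L$) is real but is not the whole difficulty.
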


\begin{proof}
The Shalika germ expansion of this integral at an element $Y$ of $\mathfrak{t}$ is
\begin{align*}
	|D^G_{Z_G(T)}(X)|^{\frac{1}{2}} \int_{Z_G(T) \backslash G} &f(g^{-1}Xg) \; \dot{dg} \\
		&= \sum_{\text{nilpotent } Z_G(Y)-\text{orbits indexed by } i} \mu_i(f) \cdot |D^G_{Z_G(T)}(X)|^{\frac{1}{2}} \cdot \Gamma_i^{Z_G(Y)}(X).
\end{align*}
It is enough to verify this lemma termwise, and check that each $|D^G_{Z_G(T)}(X)|^{\frac{1}{2}} \cdot \Gamma_i^{Z_G(Y)}(X)$ is locally bounded.   Near $Y$, each term is (up to a constant)
\begin{align*}
	&| D^{Z_G(Y)}_{Z_G(T)}(X)|^{\frac{1}{2}} \cdot \Gamma_i^{Z_G(Y)}(X).
\end{align*}
Because $Y$ belongs to the center of $Z_G(Y)$ and this function is translation invariant under central elements, it suffices to check integrability when $Y = 0$.   To do this, we consider the degrees of homogeneity for each factor:
\begin{align*}
	\text{degree of } |D^{Z_G(Y)}_{Z_G(T)}(X)|^{\frac{1}{2}} &= \frac{1}{2} \left( \dim Z_G(Y) - \dim Z_G(T) \right) \\
	\text{degree of } \Gamma_i^{Z_G(Y)}(X) &= - \dim \mathcal{O}_i  \\
		&\geq - \frac{1}{2} ( \dim Z_G(Y) - \dim Z_G(T) )
\end{align*}
or else $\Gamma_i^{Z_G(Y)}(X) = 0$. This means that 
\begin{align*}
	\text{degree of } |D^{Z_H(Y)}_{Z_G(T)}(X)|^{\frac{1}{2}} \cdot  \Gamma_i^{Z_G(Y)}(X) &\geq 0
\end{align*}
which implies that this term is locally bounded near $Y$ if it is locally bounded near elements $X$ for which $Z_G(X)$ is a proper subgroup of $Z_G(Y)$.   In this way, we can inductively reduce to the case in which $Z_G(Y)$ is a torus, which is immediate.
\end{proof}

\subsection{Bounding the Weighted Terms}

The crucial step that allows us to replace the initial weight factors $\omega$ by their simplifications $\nu$ is an application of the Lebesgue dominated convergence theorem.  This requires several lemmas on the absolute summability of not just orbital integrals, but also of their weighted analogs.  These lemmas are perhaps easiest to approach using the language of abstract norms on $F$-varieties.   We will state some results in this section without proof.  A detailed exposition of the missing arguments is \cite{Kot06}, covering material due to Harish-Chandra. 

An abstract norm on an $F$-variety is just a function $\| \cdot \|$ whose value is always greater than or equal to 1.  Among abstract norms, there is a notion of equivalence: one abstract norm is equivalent to another if the first is bounded by a constant times a power of the second and vice versa.  We will only be interested in a certain equivalence class of abstract norms.  

Namely, if $U$ is an affine scheme whose space of global sections $\mathcal{O}_U$ are generated by the functions $f_1$, $f_2$, \ldots, $f_n$, then we can define an abstract norm as follows:
\[ \| x \|_U := \text{max\,} \left\{ 1, \frac{1}{|f_1(x)|}, \frac{1}{|f_2(x)|}, \ldots, \frac{1}{|f_n(x)|} \right\} \]
where $x \in U$.   For general schemes $X$, we choose a collection of affine open subsets of $X$ that cover $X$, eg. $U_1$, $U_2$, $\ldots$, $U_{n-1}$, and $U_n$.   Define the norm
\[ \| x \|_X := \text{inf\,} \left\{ \| x \|_{U_i} : x \in U_i \right\} \]
for all $x \in X$.   While this abstract norm depends on the choice of the the affine sets $U_i$ as well as the generating functions $f_i$ on each, the equivalence class it defines is independent of all these choices, and so we define a norm on a general scheme to be an abstract norm that belongs to this equivalence class.   An example of such a norm that we have already encountered is the function $\| \cdot \|_G := \text{exp\,} d(\cdot)$ on the algebraic group $G$.   The next proposition is explained in \cite{Kot06} and lists some of the more elementary properties of norms.

\begin{prop}
\label{norm_basics}
Let $X$ and $Y$ be affine schemes of finite type over $F$ and let $\| \cdot \|_X$ and $\| \cdot \|_Y$ be norms on $X(F)$ and $Y(F)$ respectively.
\begin{itemize}
\item Let $\phi : Y \rightarrow X$ be a morphism and denote by $\phi^{*} \| \cdot \|_X$ the abstract norm on $Y(F)$ obtained by composing $\| \cdot \|_X$ with $\phi : Y(F) \rightarrow X(F)$.  Then $\| \cdot \|_Y$ dominates $\phi^{*} \| \cdot \|_X$.  If $\phi$ is finite, then $\| \cdot \|_Y$ is equivalent to $\phi^{*} \| \cdot \|_X$.
\item Suppose $Y$ is a closed subscheme of $X$.  Then the restriction of $\| \cdot \|_X$ to $Y(F)$ is equivalent to $\| \cdot \|_Y$.
\item If $F$ is locally compact, then a subset $B(F)$ of $X(F)$ has compact closure if and only if it is bounded, ie. if the norm function $\| \cdot \|_X$ is bounded on $B(F)$.
\item All three of $\text{sup\,} \{ \| x \|_X, \| y \|_Y \}$, $\|x\|_X + \|y\|_Y$, and $\|x\|_X \cdot \|y\|_Y$ are valid norms on $(X \times Y)(F) = X(F) \times Y(F)$.
\item Let $U := X_f$ denote the principal open subset of $X$ determined by a regular function $f$ on $X$, so that $U(F) = \{ x \in X(F) : f(x) \neq 0 \}$.  Then $\| u \|_U := \text{sup\,} \{ \| u \|_X, | f(u) |^{-1} \}$ is a norm on $U(F)$.
\item Suppose we are given a finite cover of $X$ by affine open subsets $U_1$, $U_2$, $\ldots$, $U_r$ as well as a norm $\| \cdot \|_i$ on $U_i(F)$ for each $i = 1, 2, \ldots, r$.  For $x \in X(F)$, define $\| x \|$ to be the infinite of the numbers $\| x \|_i$, where $i$ ranges over the set of indices for which $x \in U_i(F)$.  Then $\| \cdot \|$ is a norm on $X(F)$.
\item Let $G$ be a group scheme of finite type over $F$, and suppose we are given an action of $G$ on $X$.  Let $B$ be a bounded subset of $G(F)$.  Then there exist $c, R > 0$ such that $\| bx \|_X \leq c \| x \|_X^R$ for all $b \in B, x \in X(F)$.
\end{itemize}
\end{prop}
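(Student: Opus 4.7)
The plan is to verify each bullet in turn, reducing almost everything to the basic definition of a norm on an affine scheme, then passing to general schemes via the glueing property (the sixth bullet), which is why I would prove it reasonably early even though it is stated sixth.

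First I would establish the affine case of the first two bullets directly from the definitions. For a morphism $\phi: Y \to X$ of affine schemes, pick generators $f_1, \dots, f_n$ of $\mathcal{O}_X$. Then $\phi^* f_1, \dots, \phi^* f_n \in \mathcal{O}_Y$ extend to a set of generators of $\mathcal{O}_Y$, and directly from the defining formula one reads off $\phi^*\|\cdot\|_X \leq \|\cdot\|_Y$. If $\phi$ is finite, then $\mathcal{O}_Y$ is a finite $\mathcal{O}_X$-module, so each of the additional generators of $\mathcal{O}_Y$ satisfies a monic polynomial relation over the image of $\mathcal{O}_X$; bounding the roots of such a polynomial in terms of its coefficients gives the reverse inequality up to a power, which is exactly equivalence of abstract norms. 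The closed subscheme bullet is a special case: the closed immersion $Y \hookrightarrow X$ is finite, so the first bullet applies, but one must also verify the statement is independent of the choice of norm on $Y$, which is immediate from the equivalence class definition. For general (non-affine) $X$ and $Y$, I would pass to an affine open cover and combine the affine results with the glueing bullet.

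The third bullet (bounded equals relatively compact for locally compact $F$) is essentially a reformulation: boundedness says the finitely many $|f_i(x)|$ are bounded below away from zero and $x$ lives in some $U_i(F)$, which traps $B(F)$ inside a compact subset of one of the affine charts; conversely, on a compact set in $X(F)$ the continuous function $\|\cdot\|_X$ attains a finite supremum. The product and principal-open bullets are direct: for the product, one checks that $\text{sup}\{\|x\|_X, \|y\|_Y\}$ arises from the natural generators of $\mathcal{O}_{X \times Y}$ (and the other two expressions are equivalent to the supremum by elementary inequalities, using that both norms are $\geq 1$); for $U = X_f$, the formula $\text{sup}\{\|u\|_X, |f(u)|^{-1}\}$ is exactly the norm attached to the generating set $\{f_1|_U, \dots, f_n|_U, 1/f\}$ of $\mathcal{O}_U = \mathcal{O}_X[1/f]$.

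The glueing bullet is the technical heart: given affine covers $\{U_i\}$ with norms $\|\cdot\|_i$, one wants the inf-construction to land in the fixed equivalence class. The key point is that on each intersection $U_i \cap U_j$, the norms $\|\cdot\|_i$ and $\|\cdot\|_j$ restrict to equivalent norms on $(U_i \cap U_j)(F)$ by the principal-open and restriction bullets already established; from this one checks that any two global inf-constructions are equivalent, giving a well-defined equivalence class. The main obstacle I would expect is precisely bookkeeping here — showing that powers and constants combine correctly across finitely many overlaps to give the equivalence, which is essentially a compactness-style argument on the indexing set, but is technical enough that one should write it out carefully.

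Finally, for the last bullet, I would use the action map $\alpha: G \times X \to X$. The composition $\alpha^*\|\cdot\|_X$ is an abstract norm on $G(F) \times X(F)$ dominated by the product norm (first bullet applied to $\alpha$, then fourth bullet). Restricting the $G$-variable to the bounded set $B$, boundedness of $\|\cdot\|_G$ on $B$ absorbs into a constant, and domination by a product norm on $G \times X$ by a power of $\|\cdot\|_X$ alone (again from the first bullet for the projection $G \times X \to X$, and the general fact that one norm is bounded by a power of another) gives the desired estimate $\|bx\|_X \leq c\|x\|_X^R$. The only subtlety is that one must carry around the qualifier ``up to a power'' throughout, which is exactly why the statement is formulated with the exponent $R$ rather than a linear bound.
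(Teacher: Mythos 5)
The paper itself offers no proof of this proposition: it is quoted as background, with the explicit remark that the arguments are in \cite{Kot06} (Harish-Chandra's norm formalism as exposed by Kottwitz). So your sketch can only be compared with that standard treatment, and for most bullets it follows it correctly: pulling back along a morphism via a generating set, the integrality (monic polynomial) estimate for finite morphisms, the closed-subscheme case, the product and principal-open computations (noting that the sum and product are equivalent to the supremum because all norms are $\geq 1$), and the group-action bound obtained from the action morphism $G\times X\to X$ together with the product norm and boundedness of $\|\cdot\|_G$ on $B$ are all the standard arguments and are essentially sound. (Incidentally, the displayed definition of $\|x\|_U$ in the paper, with $1/|f_i(x)|$, is a typo for $\max\{1,|f_i(x)|\}$; your sketch implicitly, and correctly, uses the latter convention.)

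The genuine gap is in the glueing bullet, which you yourself identify as the technical heart. Your proposed mechanism --- that $\|\cdot\|_i$ and $\|\cdot\|_j$ restrict to \emph{equivalent} norms on $(U_i\cap U_j)(F)$, and that this forces any two inf-constructions to be equivalent --- fails on both counts. For an open (not closed) immersion one only gets one-sided domination: the norm of the smaller open set dominates the restriction of the ambient one, not conversely (restricting $\max\{1,|t|\}$ from $\mathbb{A}^1$ to $\mathbb{G}_m$ does not dominate $\max\{1,|t|,|t|^{-1}\}$), and the second bullet you invoke concerns closed subschemes, so the claimed pairwise equivalence on overlaps is not available. Moreover, even granted mutual equivalence of inf-constructions, that would not show the inf lies in the distinguished equivalence class on $X(F)$, which is what ``is a norm'' means; one cannot compare with the trivial cover $\{X\}$, because the restriction of $\|\cdot\|_X$ to $U_i$ is not a norm on $U_i$. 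The actual proof refines the cover by principal open subsets $X_{f_j}\subseteq U_{i(j)}$ and uses that the $f_j$ generate the unit ideal, $\sum_j g_jf_j=1$, to get $\max_j|f_j(x)|\geq c\,\|x\|_X^{-R}$; hence for every $x$ there is a chart in which $\|x\|_{X_{f_j}}=\max\{\|x\|_X,|f_j(x)|^{-1}\}\leq c'\|x\|_X^{R'}$, so $\inf_i\|x\|_i\leq c''\|x\|_X^{R''}$, while the opposite domination follows from the first bullet applied to the open immersions $U_i\to X$. The ``bookkeeping across overlaps'' you describe does not substitute for this unit-ideal step.
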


If we are given a morphism of schemes $\varphi: X \rightarrow Y$ and a norm $\| \cdot \|_X$ on $X$, we can define an abstract norm on $Y$:
\[ \| y \|_Y := \text{inf\,} \left\{ \| x \|_X : \varphi(x) = y \right\}. \]
We will call this norm the push-forward of $\| \cdot \|_X$ and sometimes denote it $\varphi_* \| \cdot \|_X$.   When this abstract norm is a norm on the image of $\varphi$, it will be a norm for all choices of $\| \cdot \|_X$, and we say that the morphism $\varphi$ has the norm descent property.    The behavior of the norm descent property under composition of morphisms is described by the next lemma.

\begin{lemma} 
Consider morphisms $f : X \rightarrow Y$ and $g : Y \rightarrow Z$ of affine schemes of finite type over $F$.  Put $h = g \circ f : X \rightarrow Z$.  Assume that the map $f : X(F) \rightarrow Y(F)$ is surjective.  Then
\begin{itemize}
\item If $f$ and $g$ satisfy the norm descent property, then so does $h$.
\item If $h$ satisfies the norm descent property, then so does $g$.
\end{itemize}
\end{lemma}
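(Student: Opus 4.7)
The plan is to leverage the elementary identity
\[ h_*\|\cdot\|_X(z) = \inf\{\|x\|_X : h(x) = z\} = \inf_{y \in g^{-1}(z)} \inf\{\|x\|_X : f(x) = y\} = g_*(f_*\|\cdot\|_X)(z), \]
valid for any abstract norm $\|\cdot\|_X$ on $X(F)$, together with the observation that surjectivity of $f : X(F) \to Y(F)$ forces $h(X(F)) = g(Y(F))$.

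For the first bullet I would take an arbitrary norm $\|\cdot\|_X$ on $X$. By norm descent of $f$, $f_*\|\cdot\|_X$ is a norm on $f(X(F)) = Y(F)$; by norm descent of $g$ applied to this norm on $Y$, the push-forward $g_*(f_*\|\cdot\|_X)$ is a norm on $g(Y(F))$. The identity above identifies this with $h_*\|\cdot\|_X$ on $h(X(F))$, which is exactly the norm descent property for $h$.

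The second bullet is less immediate. Fix a norm $\|\cdot\|_Y$ on $Y$; the goal is that $g_*\|\cdot\|_Y$ is a norm on $g(Y(F))$, which I unpack as equivalence of $g_*\|\cdot\|_Y$ with the restriction of a norm on $Z$. One direction is automatic from Proposition~\ref{norm_basics}: since $\|g(y)\|_Z \leq C\|y\|_Y^R$ holds for some $C,R$, taking the infimum over $y \in g^{-1}(z)$ shows that $\|z\|_Z$ is dominated by a power of $g_*\|\cdot\|_Y(z)$. For the reverse inequality I would pick any norm $\|\cdot\|_X$ on $X$; by hypothesis $h_*\|\cdot\|_X$ is a norm on $h(X(F)) = g(Y(F))$, so there exist constants such that each $z$ in the image admits some $x \in h^{-1}(z)$ with $\|x\|_X$ bounded by a power of $\|z\|_Z$. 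Setting $y := f(x)$ gives $g(y) = z$, and the dominance $\|f(x)\|_Y \leq C'\|x\|_X^{R'}$ from Proposition~\ref{norm_basics} propagates control from $\|x\|_X$ to $\|y\|_Y$, yielding the required bound on $g_*\|\cdot\|_Y(z)$.

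The main subtlety I expect is bookkeeping around the phrase ``norm on the image,'' since the image of a morphism is not in general a subscheme; I would interpret it uniformly as the restriction of any (hence every) norm on $Z$ to the subset $g(Y(F))$ of $Z(F)$, after which the estimates above are honest statements about real-valued functions on that subset. Beyond this, both implications reduce to formal manipulation of infima together with the dominance estimates already built into Proposition~\ref{norm_basics}; there is no delicate geometry beyond the surjectivity hypothesis on $f$.
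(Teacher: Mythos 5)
Your proof is correct: the identity $h_*\|\cdot\|_X = g_*(f_*\|\cdot\|_X)$ (using surjectivity of $f$ on $F$-points, so that $f_*\|\cdot\|_X$ is a norm on all of $Y(F)$ and $h(X(F)) = g(Y(F))$) handles the first bullet, and for the second bullet the automatic dominance $\|g(y)\|_Z \leq C\|y\|_Y^R$ together with choosing a near-optimal preimage $x \in h^{-1}(z)$ and pushing it to $y = f(x)$ gives both inequalities needed for $g_*\|\cdot\|_Y$ to be equivalent to the restriction of $\|\cdot\|_Z$ on $g(Y(F))$. The paper itself states this lemma without proof, deferring to \cite{Kot06}, and your argument is essentially the standard one given there, including the correct reading of ``norm on the image'' as the restriction of the norm class of $Z$ to the subset $g(Y(F))$ of $Z(F)$.
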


A sufficient condition for a morphism to have the norm descent property is for it to admit sections on its image locally in the Zariski topology.   For example, Bruhat theory defines an open immersion associated to each Levi subgroup $M$,
\[ \bar{U} \times M \times U \hookrightarrow G \]
which is given by multiplication.  The image of $\bar{U} \times \{ 1 \} \times U$ projects to an open subset in $M \backslash G$, and defines a section over this open subset.  By translating this ($G$-equivariant) morphism by elements of $G$, we obtain sections over the open sets in some covering of $M \backslash G$, which implies that the morphism
\[ G \rightarrow M \backslash G \]
has the norm descent property.   This classical result implies that
\begin{align*}
	\| g \|_{M \backslash G} &:= \text{inf\,} \{ \| mg \|_G : m \in M \}  
\end{align*}
is a norm on the affine $F$-variety $M \backslash G$.

Other morphisms with the norm descent property include finite morphisms and quotients of the form $G \rightarrow T \backslash G$, where $T$ is any torus of $G$.   In other words,
\begin{align*}
	\| g \|_{T \backslash G} &:= \text{inf\,} \{ \| tg \|_G : t \in T \} 
\end{align*}
is a norm on $T \backslash G$.   Especially important is the case in which $T$ is the $F$-split torus $A_M$.  In the next lemma, we will use these norms to bound the weight factors.

\begin{lemma}
\label{wf_bounds}
For any Levi subgroup $M$,
\begin{align*}
	 \omega_M (g, \mu) &\leq a ( 1 + \log \| g \|_{M \backslash G} + \| \mu \|_E )^r \\
	 \nu_M(g, \mu) &\leq a ( 1 + \log \| g \|_{M \backslash G} + \| \mu \|_E )^r
\end{align*}
for some constants $a$ and $r$, which we can assume to be equal and independent of $M$.
\end{lemma}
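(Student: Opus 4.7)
The plan is to control both quantities through the geometry of the convex hull that appears implicitly in the definition of $\omega_M^{asymp}$ and explicitly in $\nu_M$. The bound reduces, in essence, to a lattice-point count in a convex body whose vertices have norm controlled by $\|\mu\|_E$ and by $\log \|g\|_{M \backslash G}$.

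The first step is to establish the norm estimates $\|H_B(g)\|_E \leq c(1 + \log \|g\|_{M \backslash G})$ for every Borel $B \supset A$. This is standard: $H_B$ is invariant on the left under the unipotent radical of $B$ and on the right under $K$, so it factors through $M \backslash G$ and the bound follows from the Iwasawa decomposition together with the norm descent property of $G \to M \backslash G$ recorded in Proposition \ref{norm_basics}. Since $\theta$ is defined over $\mathcal{O}$, it preserves $K$ and induces an automorphism of $G$ that pulls norms back to equivalent norms; in particular $\|H_{\bar{B}}(\theta(g))\|_E$ admits the same logarithmic bound in $\log \|g\|_{M \backslash G}$.

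For $\omega_M$, I would work with the representation
\[
\omega_M(g,\mu) = \mathrm{meas}_{A_M}\{ s \in A_M \cap \mathrm{Im}\,\tau : \mathrm{inv}(sg, \theta(g)) \in \mathrm{Hull}\{\mu_B\}^*\}
\]
derived earlier. The Cartan decomposition forces any such $s$ to satisfy $\nu_s \in \mathrm{Hull}\{\mu_B - H_B(g) + H_{\bar{B}}(\theta(g))\}^*$ up to a bounded correction (Arthur's key geometric lemma gives equality when $\mu$ is sufficiently regular; for $\mu$ outside the regular region the asserted containment still holds after enlarging the hull by a term of norm $O(1 + \log \|g\|_{M \backslash G})$, as one checks by inspecting the Iwasawa decomposition). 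Using step one, every vertex of the relevant convex body has Euclidean norm bounded by a constant times $(1 + \|\mu\|_E + \log \|g\|_{M \backslash G})$, and hence the number of lattice points it contains is at most $a(1 + \|\mu\|_E + \log \|g\|_{M \backslash G})^r$, where $r$ is the rank of $X_*(A_M^-)$ and $a$ depends only on the root system.

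For $\nu_M$ I would use that, by the discussion in \ref{polynomial_wf}, $\nu_M(g,\cdot)$ is a polynomial in $\mu$ of total degree bounded by $\dim Y_{M^{\theta}\backslash M}$, and that it coincides with $\omega_M^{asymp}(g,\cdot) = \omega_M(g,\cdot)$ on the cone of $\mu$ sufficiently regular relative to $g$. Choosing interpolation points $\mu' \in X_*(A)_{dom}$ whose coordinates have size on the order of $c[1 + d(g) + d(\theta(g))]$ places them just inside the regular region, while their norms remain $O(1 + \log \|g\|_{M \backslash G})$. Interpolating from these bounded values then controls every coefficient of the polynomial $\nu_M(g,\cdot)$ by a quantity of the form $a'(1+\log\|g\|_{M\backslash G})^{r'}$, and evaluation at an arbitrary $\mu$ yields the second inequality. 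Taking the maximum of the two $(a,r)$-pairs and absorbing the finitely many Levi subgroups gives the uniformity statement. The principal technical obstacle is this last step: the threshold in Arthur's geometric lemma depends on $d(g)$, so one must verify that interpolation points can be chosen jointly regular and of logarithmic size in $\|g\|_{M\backslash G}$, which is precisely what the estimate $d(g) \leq c\log \|g\|_G$ provides.
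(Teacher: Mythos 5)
There is a genuine gap, and it sits at the exact point where the norm $\|g\|_{M\backslash G}$ enters your argument. Your first step asserts that $H_B$ ``factors through $M\backslash G$'' and hence that $\|H_B(g)\|_E \leq c\,(1+\log\|g\|_{M\backslash G})$. This is false: $H_B$ is left-invariant under the unipotent radical of $B$ and right-invariant under $K$, but it is not left $M$-invariant; indeed $H_B(ag)=\nu_a+H_B(g)$ for $a\in A$, so taking $g=a\in A_M$ deep in a chamber makes $\|H_B(g)\|_E$ arbitrarily large while $\|g\|_{M\backslash G}$ stays bounded. The same conflation of $\|g\|_G$ with $\|g\|_{M\backslash G}$ recurs in your ``enlarged hull'' claim for non-regular $\mu$ and in the choice of interpolation points for $\nu_M$ (the correct elementary estimate is $d(g)\leq c\log\|g\|_G$, not $c\log\|g\|_{M\backslash G}$). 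What is true without further work is the bound with $\log\|g\|_G$; the improvement to $\log\|g\|_{M\backslash G}$ requires an invariance argument you have not supplied. The paper's route is to majorize $\omega_M(g,\mu)$ by the number of $\nu\in\Lambda_M$ for which some $m\in M$ with $H_M(m)=\nu$ satisfies $\text{inv}(mg,\theta(g))\in\text{Conv}(\mu)$; this majorant is manifestly unchanged under $g\mapsto m'g$ for $m'\in M$, the triangle inequality in the Bruhat--Tits building gives $d(m)\leq d(g)+d(\theta(g))+\|\mu\|_E$ and hence the bound with $\log\|g\|_G$, and only then does left $M$-invariance allow one to pass to the infimum defining $\|g\|_{M\backslash G}$. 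Note also that the paper deliberately avoids Arthur's key geometric lemma here: the bound must hold for every $\mu$ and $g$, not only in the regular range, which is exactly why the crude building estimate is used instead of the hull description; your proposed ``bounded correction'' to the hull outside the regular range is asserted, not proved, and would need its own argument.

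Your treatment of $\nu_M$ by polynomial interpolation (bounded degree, values controlled at regular interpolation points, then coefficient bounds) is a genuinely different idea from the paper, which instead reads the bound off the explicit lattice-point/Euler--Poincar\'e formulas of section \ref{polynomial_wf} together with $\|H_P(g)\|_E\leq\log\|g\|_G$. The interpolation route could be made to work and has the virtue of not reopening the toric-variety computation, but as written it inherits the same norm defect, and you would still need to verify that suitably spaced dominant interpolation points exist inside the $g$-dependent regular cone with the required quantitative control.
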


\begin{proof}
We begin by bounding the preliminary weight factor $\omega_M$:  
\begin{align*}
\omega_M(g, \mu) &\leq | \{ \nu \in X_*(A_M) :  K \theta(g)^{-1} a g K \in \text{Conv\,}(\mu) \text{ and } H_A(a) = \nu_a = \nu \} | \\
  			&\leq | \{ \nu \in \Lambda_M :  K \theta(g)^{-1} m g K  \in \text{Conv\,}(\mu) \text{ and } H_M(m) = \nu  \} |
\end{align*}
and this last bound is left $M$-invariant.  The requirement that $\text{inv\,}(mg, \theta(g)) \in \text{Conv\,}(\mu)$ implies, by the triangle inequality in the Bruhat-Tits building, that 
\[ d(m) \leq d(g) + d(\theta(g)) + \| \mu \|_E \]
and so this is bounded by
\begin{align*}
  | \{ \nu \in \Lambda_M :  d(m) \leq d(g) + d(\theta(g)) + \| \mu \|_E &\text{ and } H_M(m) = \nu  \} |\\
  & \leq c | \{ \nu \in \Lambda_M : \| \bar{\nu} \| \leq  d(g) + d(\theta(g)) + \| \mu \|_E \} | 
\end{align*}
where $\bar{\nu}$ is the image of $\nu$ under the natural map $\Lambda_M \rightarrow \mathfrak{a}_M$ and $c$ is some constant.   This is in turn bounded by
\[ a ( 1 + \log \| g \|_G + \| \mu \|_E )^r \]
for constants $a$ and $r$.   By the left $M$-invariance of some of our bounds, we can strengthen the term $\log \| g \|_G$ to $\log \| g \|_{M \backslash G}$, giving the sought bound.   The explicit analysis of section \ref{polynomial_wf} implies $\nu_M$ is similarly bounded, keeping in mind the inequality $\| H_P(g) \|_E \leq \log \| g \|_G.$
  \end{proof}

The classical trace formula is a sum of terms bounded by expressions like the one described in the next lemma.  We will need to know that these converge to control the asymptotic behavior of the $\theta$-split side.

\begin{lemma}
Let $\| \cdot \|_{Z_G(X) \backslash G}$ by a norm on the homogeneous space $Z_G(T) \backslash G$, where $T$ is a maximal $\theta$-split torus with Lie algebra $\mathfrak{t}$.  The integral
\[ \int_{\mathfrak{t}'} |D^G_{Z_G(T)}(X)|^{\frac{1}{2}} \int_{Z_G(T) \backslash G} f(g^{-1}Xg) \,\left( \log \| g \|_{Z_G(X) \backslash G}\right)^r \, dg \, dX \]
converges absolutely for any $f \in C_c^{\infty}(\mathfrak{g})$ and any integer $r > 0$.    
\end{lemma}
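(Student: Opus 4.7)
My plan is to reduce absolute convergence to the local-boundedness statement of the preceding lemma, by absorbing the factor $(\log \|\cdot\|_{Z_G(X) \backslash G})^r$ into a uniform constant via a compactness argument. The rough idea is that although the log factor is unbounded on $Z_G(T) \backslash G$, it is bounded on the effective support of the integrand, so its presence costs nothing.

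First I would restrict the effective domain of integration in $X$. Since $\pi_G$ is $G$-invariant and $f$ has compact support, the integrand vanishes unless $\pi_G(X)$ lies in the compact set $\pi_G(\mathrm{supp}(f))$. The finiteness of the morphism $\mathbb{A}_{Z_G(T)} \to \mathbb{A}_G$ then forces $\pi_{Z_G(T)}(X)$ into some compact subset $\omega_M \subset \mathbb{A}_{Z_G(T)}$, and since $\pi_{Z_G(T)}|_{\mathfrak{t}}$ is finite the set $\mathfrak{t} \cap \pi_{Z_G(T)}^{-1}(\omega_M)$ is compact in $\mathfrak{t}$. So the outer integration occurs over a compact piece of $\mathfrak{t}'$.

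Next I would invoke Harish-Chandra's compactness lemma with $M = Z_G(T)$, $\omega_{\mathfrak{g}} = \mathrm{supp}(f)$, and $\omega_M$ as above. This produces a set $\mathcal{X} \subset Z_G(T) \backslash G$ with compact closure such that $g^{-1}Xg \in \mathrm{supp}(f)$ for some $X \in \pi_{Z_G(T)}^{-1}(\omega_M)$ implies $g \in \overline{\mathcal{X}}$. Because $Z_G(T) \subset Z_G(X)$ for every $X \in \mathfrak{t}$, the push-forward definition of these norms (together with proposition \ref{norm_basics}) yields
\[ \|g\|_{Z_G(X) \backslash G} \le \|g\|_{Z_G(T) \backslash G}, \]
and the right-hand side is a continuous function of $g$, hence bounded on $\overline{\mathcal{X}}$ by some constant depending only on $f$. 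Consequently $(\log \|g\|_{Z_G(X) \backslash G})^r \le C$ on the entire support of the integrand.

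The problem therefore collapses to the absolute convergence of
\[ C \int_{\mathfrak{t}'} |D^G_{Z_G(T)}(X)|^{\frac{1}{2}} \int_{Z_G(T) \backslash G} |f(g^{-1}Xg)|\, dg\, dX, \]
which is immediate from the previous lemma: the inner expression is a locally bounded function of $X$ and, by the first step, it is supported in a compact subset of $\mathfrak{t}$, so it is integrable. The step I expect to need the most care is the uniformity of the compactness lemma across $X$ in the restricted domain, together with the comparison of the two norms; both are standard consequences of the machinery already established, so the main work is really bookkeeping rather than genuine analytic difficulty.
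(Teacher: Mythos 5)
There is a genuine gap at the heart of your argument: the claim that $\bigl(\log \| g \|_{Z_G(X) \backslash G}\bigr)^r$ is bounded on the effective support of the integrand. Harish-Chandra's compactness lemma requires the compact set $\omega_M$ to lie in $\mathbb{A}_M'$, the locus where $\pi_M$ is \'{e}tale (equivalently, where the Jacobian $|D^G_{Z_G(T)}|^{\frac{1}{2}}$ does not vanish). But the set of $X \in \mathfrak{t}'$ whose orbit meets $\mathrm{supp}(f)$ has closure meeting the singular locus $\{D^G_{Z_G(T)} = 0\}$, so you cannot choose a single $\omega_M \subset \mathbb{A}'_{Z_G(T)}$ covering all relevant $X$, and the set $\mathcal{X}$ you extract does not have compact closure uniformly in $X$. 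Concretely, for $G = SL_2$, $T$ the diagonal torus, $X = \mathrm{diag}(t,-t)$ and $g$ upper triangular unipotent with entry $u$, one has $g^{-1}Xg \in \mathrm{supp}(f)$ for $|u|$ as large as roughly $|t|^{-1}$; as $t \to 0$ these cosets go to infinity in $T \backslash G$ and $\log\|g\|$ grows like $\log(1/|t|)$. So the weight is genuinely unbounded on the support, the constant $C$ does not exist, and the reduction to the unweighted lemma collapses. (Your first step, that $X$ is confined to a compact subset of $\mathfrak{t}$, is fine, and the comparison $\|g\|_{Z_G(X)\backslash G} \leq \|g\|_{Z_G(T)\backslash G}$ is fine; the failure is only in bounding the weight.)

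The whole content of the lemma is precisely the interaction between the blow-up of the weight and the degeneration of $D^G_{Z_G(T)}(X)$ near singular elements. The paper's route is to trade the logarithm for a small power: $(\log x)^r \leq x^{\epsilon}$ for $x$ large, then to dominate $\|g\|^{\epsilon}_{Z_G(T)\backslash G}$ on the support of $f(g^{-1}Xg)$ by a constant times $\max\bigl(1, |D^G_{Z_G(T)}(X)|^{-1}\bigr)^{\epsilon}$, using the norm $\|Y\|_{rs} = \max\bigl(\|Y\|_{\mathfrak{g}}, |D^G_{Z_G(T)}(Y)|^{-1}\bigr)$ on the orbit set $U(\mathfrak{t})$ and the fact that $\|g\|_{Z_G(T)\backslash G}$ is dominated by $\|g^{-1}Xg\|_{rs}$. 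This reduces the problem to the local integrability of $|D^G_{Z_G(T)}(X)|^{-\epsilon}$ times the locally bounded normalized orbital integral, which is where the previous lemma actually enters. You will need some version of this quantitative comparison between $\|g\|$ and $|D(X)|^{-1}$; a pure compactness argument cannot see it.
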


\begin{proof}
According to the elementary theory of real functions, for any positive $\epsilon$, the inequality $(\log x)^r < x^{\epsilon}$ holds for real numbers $x$ taken to be sufficiently large.   It is therefore enough to prove that
\[ \int_{\mathfrak{t}'} |D^G_{Z_G(T)}(X)|^{\frac{1}{2}} \int_{Z_G(T) \backslash G} f(g^{-1}Xg) \, \| g \|_{Z_G(T) \backslash G}^{\epsilon} \, dg \, dX \]
converges for some postiive $\epsilon$.    

Choose a norm $\| \cdot \|_{\mathfrak{g}}$ on $\mathfrak{g}$.   Let $U(\mathfrak{t})$ denote the $G$-orbit of $\mathfrak{t}'$ so that, by proposition \ref{norm_basics}, the following is a norm on $U(\mathfrak{t})$:
\[ \| X \|_{rs} = \text{max\,} \left( \| X \|_{\mathfrak{g}}, \frac{1}{|D^G_{Z_G(T)}(X)|} \right). \]
Because $f$ is compactly supported, we can bound $\| X \|_{\mathfrak{g}}$ on its support.   This allows us to bound $\| g^{-1}Xg \|_{rs}$ by some constant multiple of
\[ \text{max\,} \left(1, \frac{1}{|D^G_{Z_G(T)}(g^{-1}Xg)|} \right) =  \text{max\,} \left(1, \frac{1}{|D^G_{Z_G(T)}(X)|} \right) \]
whenever the integrand does not vanish.   Since proposition \ref{norm_basics} again implies that $\| g \|_{Z_G(T) \backslash G}$ is dominated by $\| g^{-1}Xg \|_{rs}$, it is enough to prove the convergence of 
\[ \int_{\mathfrak{t}'}  |D^G_{Z_G(T)}(X)|^{\frac{1}{2}} \int_{Z_G(X) \backslash G} f(g^{-1}Xg) \, \frac{1}{|D^G_{Z_G(T)}(X)|^{\epsilon}}\, dg \, dX. \]
The integrand is compactly supported, and so it is enough to prove that it is also locally integrable.   We have already proven that the normalized orbital integral
\[ |D^G_{Z_G(T)}(X)|^{\frac{1}{2}} \int_{Z_G(X) \backslash G} f(g^{-1}Xg) \, dg  \]
is locally bounded.   Because $|D^G_{Z_G(T)}(X)|^{-\epsilon}$ is locally integrable when $\epsilon$ is sufficiently small, we can find an $\epsilon$ for which the product of these two expressions is also locally integrable, and this completes the proof of the lemma.
\end{proof}

\section{The $\theta$-split Side of the Relative Local Trace Formula} \label{5}

With the results of section \hyperref[4]{4} in hand, we can prove the limit of proposition \ref{main_result}, which is the goal of this paper.  As discussed in the introduction, the next proposition implies that $J_-(f, \nu)$ is the correct expression for the geometric side of the local relative trace formula for Lie algebras.  One obtains more concrete expressions by evaluating this polynomial at specific values of $\mu$.

\begin{prop}
For any two dominant coweights $\mu_1$, $\mu_2$, let $\mu := \mu_1 + d \mu_2$.   Then
\begin{align*}
 &\lim_{d \rightarrow \infty} J_-(f, \omega) - J_-(f, \nu) = 0 
\end{align*}
\end{prop}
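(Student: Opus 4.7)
The plan is to apply the Lebesgue dominated convergence theorem to each of the finitely many summands of $J_-(f, \omega) - J_-(f, \nu)$ indexed by pairs $(M, T) \in \mathcal{L}^- \times \mathcal{T}_M$, reducing matters to showing that for each such summand
\begin{equation*}
\int_{\mathfrak{t}'} |D^G(X)|^{\frac{1}{2}} \int_{(A_M \cdot Z_H(T)) \backslash G} f(g^{-1}Xg)\bigl[\omega_M(g, \mu) - \nu_M(g, \mu)\bigr] \, dg \, dX \longrightarrow 0
\end{equation*}
as $d \to \infty$. Since $\mathcal{L}^-$ and each $\mathcal{T}_M$ are finite, this reduction is immediate.

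Pointwise vanishing of the integrand is the easy half. The last proposition of section 3.4 gives $\omega_M(g, \mu) = \omega_M^{asymp}(g, \mu)$ whenever $\langle \alpha, \mu \rangle \geq c[1 + d(g) + d(\theta(g))]$ for every simple root $\alpha$, and the construction of section \ref{polynomial_wf} yields $\nu_M(g, \mu) = \omega_M^{asymp}(g, \mu)$ under the same hypothesis. For $\mu = \mu_1 + d\mu_2$ with $\mu_2$ regular dominant, $\langle \alpha, \mu \rangle$ grows linearly in $d$ for each simple $\alpha$, so for any fixed $(X, g)$ in the support of the integrand Arthur's regularity condition is eventually satisfied, and $\omega_M(g, \mu) - \nu_M(g, \mu) = 0$ for all sufficiently large $d$.

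The essential technical step, and the expected main obstacle, is producing a $d$-independent integrable dominating function. Lemma \ref{wf_bounds} bounds $|\omega_M - \nu_M|$ by $2a(1 + \log \|g\|_{M \backslash G} + \|\mu\|_E)^r$, but the $\|\mu\|_E$ term grows with $d$ and so is inadmissible as-is. The remedy is the observation that the two weight factors can differ only on the complement of the region where Arthur's regularity condition holds; that is, only when there exists some simple root $\alpha$ with $\langle \alpha, \mu \rangle < c[1 + d(g) + d(\theta(g))]$. Since $\mu_2$ is regular dominant, $\langle \alpha, \mu \rangle$ dominates $\|\mu\|_E$ uniformly in $\alpha$, so on the set of disagreement
\begin{equation*}
\|\mu\|_E \leq C \langle \alpha, \mu \rangle \leq C'(1 + d(g) + d(\theta(g))) \leq C''(1 + \log \|g\|_{M \backslash G}),
\end{equation*}
where the last inequality absorbs $d(\theta(g))$ into $d(g)$ via $|d(\theta(g)) - d(g)| \leq 2d(\theta(x_0), x_0)$ and uses the left $M$-invariance of $\omega_M$ and $\nu_M$ to pass from $\|\cdot\|_G$ to $\|\cdot\|_{M \backslash G}$. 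Substituting into the bound of Lemma \ref{wf_bounds} produces a $d$-independent bound
\begin{equation*}
|\omega_M(g, \mu) - \nu_M(g, \mu)| \leq C'''(1 + \log \|g\|_{M \backslash G})^r
\end{equation*}
valid on the set of disagreement and trivially elsewhere.

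Integrability of this dominating function against $|f(g^{-1}Xg)| \cdot |D^G(X)|^{1/2}$ on $\mathfrak{t}' \times (A_M \cdot Z_H(T)) \backslash G$ follows from the final lemma of section 4, after absorbing the compact outer integration over $A_M \cdot Z_H(T) \backslash Z_G(T)$ (cocompact by the centralizer lemma of section 2) into a bounded multiplicative factor and comparing $\|\cdot\|_{M\backslash G}$ with $\|\cdot\|_{Z_G(T) \backslash G}$ via proposition \ref{norm_basics}. Applying the Lebesgue dominated convergence theorem to each of the finitely many summands then yields the stated limit.
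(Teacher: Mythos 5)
Your proposal is correct and follows essentially the same route as the paper: pointwise agreement of $\omega_M$ and $\nu_M$ for large $d$ via Arthur's key geometric lemma, a $d$-independent dominating function of the form $a(b+\log\|g\|)^r$ obtained by bounding $\|\mu\|_E$ on the set where the two weight factors can disagree, and then the Lebesgue dominated convergence theorem combined with the integrability lemma at the end of section \hyperref[4]{4}. You make explicit a few points the paper leaves implicit (the regularity of $\mu_2$ and the norm comparisons between $\|\cdot\|_{M\backslash G}$, $\|\cdot\|_{A_M\backslash G}$, and $\|\cdot\|_{Z_G(T)\backslash G}$), but the argument is the same.
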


\begin{proof}
To prove that
\[ \lim_{d \rightarrow \infty} J_-(f, \omega) - J_-(f, \nu) = 0 \]
it is sufficient to prove that for an arbitrary torus $T$,
\begin{align*}
& \lim_{d \rightarrow \infty} \int_{\mathfrak{t}'} |D^G_{Z_G(T)}(X)|^{\frac{1}{2}}  \int_{A_M \backslash G} f(g^{-1}Xg) \, \omega_M(g, \mu_1 + d \mu_2) \, dg\, dX\\ & \hspace{10ex} - \int_{\mathfrak{t}'} |D^G_{Z_G(T)}(X)|^{\frac{1}{2}} \int_{A_M \backslash G} f(g^{-1}Xg) \, \nu_M(g, \mu_1 + d \mu_2) \, dg\, dX \\
 & =  \lim_{d \rightarrow \infty}  \int_{\mathfrak{t}'} |D^G_{Z_G(T)}(X)|^{\frac{1}{2}} \int_{A_M \backslash G} f(g^{-1}Xg) \left( \omega_M(g, \mu_1 + d \mu_2) -  \nu_M(g, \mu_1 + d \mu_2) \right) \, dg\, dX = 0.
 \end{align*}
Now by the lemma \ref{wf_bounds}, there exist constants $a$, $b$, and $r$ such that
\[ \left| \omega_M(g, \mu_1 + d \mu_2) -  \nu_M(g, \mu_1 + d \mu_2) \right|  \leq  a(b + \log \| g \|_{M \backslash G} + d)^r. \]
But by Arthur's key geometric lemma, this difference of weight factors vanishes unless $d$ is less than some constant times $(1 + \log \| g \|_{A_M \backslash G})$.   Since $\| g \|_{M \backslash G}$ is certainly less than $\| g \|_{A_M \backslash G}$,  we obtain a simple bound of this difference that is uniform in $d$:
\[ \left| \omega_M(g, \mu_1 + d \mu_2) -  \nu_M(g, \mu_1 + d \mu_2) \right|  \leq  a(b + \log \|g \|_{A_M \backslash G})^r \]
for possibly different constants $a$ and $b$.  We have proven in section \hyperref[4]{4} that
\[ \int_{\mathfrak{t}'} |D^G(X)|^{\frac{1}{2}} \int_{A_M \backslash G} f(g^{-1}Xg) \, dg\, dX \text{ and }  \int_{\mathfrak{t}'} |D^G(X)|^{\frac{1}{2}} \int_{A_M \backslash G} f(g^{-1}Xg) \, (\log \|g\|_{A_M \backslash G} )^r \, dg\, dX \]
are absolutely summable, so we may apply the Lebesgue dominated convergence theorem and evaluate the limit inside the integral.   The proof of the proposition then follows from the observation that 
\[ \omega_M(g, \mu_1 + d \mu_2) = \nu_M(g, \mu_1 + d \mu_2)  \]
for sufficiently large $d$.
\end{proof}

\vspace{1ex}
\scriptsize
Department of Mathematics, 
University of Chicago,  
USA
\hspace{30ex} sparling@math.uchicago.edu 

\end{document}